\newtheorem{theorem}{Theorem}
\newtheorem{corollary}[theorem]{Corollary}
\newtheorem{definition}[theorem]{Definition}
\newtheorem{lemma}[theorem]{Lemma}
\newtheorem{proposition}[theorem]{Proposition}
\newtheorem{remark}[theorem]{Remark}
\numberwithin{equation}{section}
\numberwithin{theorem}{section}
\renewcommand{\tilde}{\widetilde}
\renewcommand{\bar}{\overline}
\renewcommand{\Re}{\operatorname{Re}}
\renewcommand{\hat}[1]{\widehat{#1}}
\newcommand{\wt}[1]{\widetilde{#1}}
\newcommand\eps\varepsilon
\newcommand{\vp}{\varphi}
\renewcommand\Re{\operatorname{Re}}
\newcommand\Ric{\operatorname{Ric}}
\newcommand\tr{\operatorname{tr}}
\newcommand\bff{\operatorname{bf}}
\newcommand\ff{\operatorname{ff}}
\newcommand\td{\operatorname{td}}
\newcommand{\lf}{\operatorname{lf}}
\newcommand{\rf}{\operatorname{rf}}
\newcommand{\lb}{\operatorname{lb}}
\newcommand{\rb}{\operatorname{rb}}
\newcommand{\e}{\epsilon}
\newcommand\paperintro%
\newcommand\paperbody%
\DeclareMathAlphabet{\mathpzc}{OT1}{pzc}{m}{it}
\newcommand{\CC}{\mathbb C}
\newcommand{\NN}{\mathbb N}
\newcommand{\RR}{\mathbb R}
\newcommand{\ZZ}{\mathbb Z}
\newcommand{\del}{\partial}
\newcommand{\loc}{{\mathrm{loc}}}
\newcommand{\phg}{{\mathrm{phg}}}
\newcommand{\K}{{\mathrm K}}
\newcommand{\be}{\beta}
\newcommand{\calA}{{\mathcal A}}
\newcommand{\calC}{{\mathcal C}}
\newcommand{\calD}{{\mathcal D}}
\newcommand{\calE}{{\mathcal E}}
\newcommand{\calF}{{\mathcal F}}
\newcommand{\calH}{{\mathcal H}}
\newcommand{\calJ}{{\mathcal J}}
\newcommand{\calL}{{\mathcal L}}
\newcommand{\calO}{{\mathcal O}}
\newcommand{\calT}{{\mathcal T}}
\newcommand{\calU}{{\mathcal U}}
\newcommand{\calV}{{\mathcal V}}
\newcommand{\calW}{{\mathcal W}}
\newcommand{\wtM}{\widetilde{M}}
\newcommand{\vol}{\mathrm{Area}}
\newcommand{\dist}{\mathrm{dist}\,}
\def\ra{\rightarrow}
\def\K{K\"ahler }
\def\h#1{\hbox{#1}}
\def\i{\sqrt{-1}}
\def\o{\omega}
\def\ovp{\omega_\vp}
\def\bpf{\begin{proof}}
\def\epf{\end{proof}}
\def\beq{\begin{equation}}
\def\eeq{\end{equation}}
\def\calI{{\cal I}}
\begin{document}

\title{Ricci flow on surfaces with conic singularities}
\author{Rafe Mazzeo \\ Stanford University \and Yanir A. Rubinstein \\ University of Maryland \and Natasa Sesum \\ Rutgers University}

\maketitle

\begin{abstract}
We establish short-time existence of the Ricci flow on surfaces with a finite number of conic points, all with cone angle 
between $0$ and $2\pi$, with cone angles remain fixed or changing in some smooth prescribed way. For the 
angle-preserving flow we prove long-time existence; if the angles satisfy the Troyanov condition, 
this flow converges exponentially to the unique constant curvature metric with these cone angles; 
if this condition fails,  the conformal factor blows up at precisely one point.
This is a first step towards the one-dimensional version of the Hamilton--Tian conjecture.
These geometric results rely on a new refined regularity theorem for solutions of linear parabolic equations on manifolds
with conic singularities. This is proved using methods from geometric microlocal analysis which 
is the main novelty of this article. 
\end{abstract}

\section{Introduction}
This article studies the local and global properties of Ricci flow on compact surfaces with conic singularities. 
This is a natural continuation of various efforts, including recent work of the first and third authors, 
to develop a comprehensive understanding of Ricci flow in two-dimensions in various natural geometries.  
This work is also partly motivated by extensive recent efforts in higher dimensional complex geometry toward
finding K\"ahler--Einstein edge metrics with prescribed cone angle along a divisor, as approached by the 
first two authors using a stationary (continuity) method with features suggested by the Ricci flow, together with 
geometric microlocal techniques. A final motivation is the Hamilton--Tian conjecture, stipulating that K\"ahler--Ricci 
flow on Fano manifolds should converge in a suitable sense to a K\"ahler--Ricci soliton with mild singularities;
we establish the analogue of this conjecture in our setting.

We investigate here the dynamical problem of Ricci flow on a Riemann surface $(M,J)$, with conic singularities
at a specified $k$-tuple of points $\vec p$, where the cone angle at $p_j$ is $2\pi \beta_j$. 
Our main theorems provide a nearly complete understanding of this flow in this setting for cone angle smaller 
than $2\pi$. We state these results, deferring explanation of the notations and terminology until later in the introduction 
and the next section.
\begin{theorem}
Consider a set of conic data $(M,J, \vec{p}, \vec{\beta})$ with all $\beta_j \in (0,1)$, and let $g_0$ be a $\calC^{2,\gamma}_b$ 
conic metric compatible with this data (this regularity class is defined in \S\ref{FnSubsec}) with curvature $K_{g_0} \in 
\calC^{0,\gamma}_b$.  If $\beta_j(t): [0,t_0] \to (0,1)$ is a $k$-tuple of $\calC^\infty$ functions, with $\beta_j(0) = \beta_j$, 
then there exists a solution $g(t)$ of \eqref{rf} defined on some interval $0 \leq t < T \leq t_0$ with conic singularities 
at the points $p_j$ with cone angle $2\pi \beta_j(t)$ at time $t$. For $t > 0$, $g(t)$ is smooth away from
the $p_j$ and polyhomogeneous at these conic points, and satisfies $\lim_{t \searrow 0} g(t) = g_0$. 
\label{shorttime}
\end{theorem}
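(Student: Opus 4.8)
The plan is to reduce the flow to a scalar quasilinear parabolic equation for a conformal factor, solve it by a contraction argument built on the paper's linear conic heat theory, and then bootstrap for $t>0$.

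\textit{Reduction.} In dimension two the Ricci flow is conformal (no DeTurck trick is needed), so writing the evolving metric as $g(t) = e^{2w(t)}g_0$ turns $\eqref{rf}$ into the scalar quasilinear parabolic equation $\partial_t w = e^{-2w}(\Delta_{g_0}w - K_{g_0})$, to be solved with $w(0)\equiv 0$ (a normalizing term in $\eqref{rf}$, if present, contributes only lower order terms). The one complication is the prescribed change of cone angle: for $g(t)$ to have angle $2\pi\beta_j(t)$, $w(t)$ must have leading behaviour $\tfrac{\beta_j(t)-\beta_j}{\beta_j}\log\rho$ at $p_j$, where $\rho$ is a conic distance. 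One therefore fixes an explicit, $t$-smooth family $\psi(t)$, supported near the $p_j$ with $\psi(0)\equiv 0$, such that $e^{2\psi(t)}g_0$ is a conic metric of angle $2\pi\beta_j(t)$, and sets $w = \psi(t) + u$; the new unknown $u$ solves
\[
\partial_t u \;=\; e^{-2(u+\psi)}\bigl(\Delta_{g_0}(u+\psi) - K_{g_0}\bigr) - \partial_t\psi \;=:\; \mathcal F(u,t), \qquad u(0)\equiv 0 .
\]
Here $\Delta_{g_0}\psi\in\calC^{0,\gamma}_b$ (since $\log\rho$ is $g_0$-harmonic off $p_j$ and the cutoff is supported away from $p_j$), but $\partial_t\psi\sim\dot\beta_j(t)\log\rho$ near $p_j$ lies only in a weighted space $\rho^{-\varepsilon}\calC^{0,\gamma}_b$; one accommodates this by permitting an arbitrarily small weight loss $\varepsilon>0$, which is harmless because the conic Laplacian has only discretely many indicial roots and, on a short interval, the leading coefficient $e^{-2\psi}$ stays close to $1$.

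\textit{Linear input.} Let $\calC^{k,\gamma}_b$ denote the parabolically scaled conic Hölder spaces over $M\times[0,T]$ built from the stationary spaces of \S\ref{FnSubsec}; these are insensitive to the cone angle, since distinct conic distances are mutually bounded powers. The essential ingredient is the paper's refined parabolic regularity theorem, proved by a geometric microlocal construction of the conic heat kernel on a suitable parabolic blow-up of $M^2\times[0,\infty)$: for $f\in\rho^{-\varepsilon}\calC^{0,\gamma}_b$ and $v_0\in\calC^{2,\gamma}_b$ meeting the natural compatibility at $t=0$, the conic heat problem $(\partial_t - e^{-2\psi}\Delta_{g_0})v = f$, $v(0)=v_0$, has a unique solution $v\in\calC^{2,\gamma}_b$ with
\[
\|v\|_{\calC^{2,\gamma}_b(M\times[0,T])} \;\le\; C\bigl(\|v_0\|_{\calC^{2,\gamma}_b} + \|f\|_{\rho^{-\varepsilon}\calC^{0,\gamma}_b}\bigr),
\]
$C$ independent of $T\in(0,t_0]$. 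Near each $p_j$ the operator $e^{-2\psi}\Delta_{g_0} = \Delta_{e^{2\psi}g_0}$ is a $t$-smooth, small perturbation of the heat operator of an exact cone of angle $2\pi\beta_j(t)$, and for $t>0$ the conic heat kernel is moreover polyhomogeneous in all variables.

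\textit{Short-time existence.} Separate the leading term, $\mathcal F(u,t) = e^{-2\psi(t)}\Delta_{g_0}u + N(u,\cdot,\cdot)$, where $N$ gathers the inhomogeneous and genuinely nonlinear terms, is smooth in $u$ and its $g_0$-Laplacian, and satisfies $N(0,\cdot,\cdot) = e^{-2\psi}(\Delta_{g_0}\psi - K_{g_0}) - \partial_t\psi\in\rho^{-\varepsilon}\calC^{0,\gamma}_b$ by the hypothesis $K_{g_0}\in\calC^{0,\gamma}_b$. Let $u^\ast$ solve $(\partial_t - e^{-2\psi}\Delta_{g_0})u^\ast = N(0,\cdot,\cdot)$ with zero initial data, and seek $u = u^\ast + \bar u$; the equation for $\bar u$ has zero initial data and a forcing that is Lipschitz in $\bar u$ and vanishes identically at $t=0$ (since $\psi(0)\equiv 0$ and $u^\ast(0)\equiv 0$). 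A standard contraction argument then produces, for $T\in(0,t_0]$ sufficiently small, a unique $\bar u$ in a small ball of $\calC^{2,\gamma}_b(M\times[0,T])$, hence a solution $u\in\calC^{2,\gamma}_b(M\times[0,T])$ with $u(0)\equiv 0$. Then $g(t) = e^{2(u+\psi(t))}g_0$ solves $\eqref{rf}$ on $[0,T)$, has cone angle $2\pi\beta_j(t)$ by construction of $\psi$, and $\lim_{t\searrow 0}g(t) = g_0$ because $u(0)\equiv\psi(0)\equiv 0$.

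\textit{Regularity for $t>0$, and the main difficulty.} Away from the $p_j$ the equation is uniformly parabolic with coefficients inheriting the regularity of $u$, so Schauder bootstrap gives $g(t)\in\calC^\infty$ there for $t>0$. Near $p_j$ and for $t>0$ one reinserts $u$ into the Duhamel formula for $\partial_t - e^{-2\psi}\Delta_{g_0}$: polyhomogeneity of the conic heat kernel on the blown-up heat space promotes $u$ to a partial asymptotic expansion, and iterating — each step enlarging the index set and improving the order in $\rho$ — yields that $u(t)$, hence $g(t)$, is polyhomogeneous at $p_j$. The crux of the whole argument is the linear input: establishing the mapping properties above for the conic heat operator in these low-regularity (and mildly weighted) parabolic Hölder spaces, with a model operator at each $p_j$ that varies smoothly in $t$ and a degenerate coefficient $e^{-2\psi}$ that is a time-dependent power of the conic distance close to $1$. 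This is exactly what the microlocal heat-kernel construction is designed to supply; granted it, the conformal reduction, the fixed-point scheme, and the $t>0$ bootstrap are routine, up to bookkeeping with conic weights.
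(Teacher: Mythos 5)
Your overall strategy coincides with the paper's: reduce to a scalar quasilinear equation for the conformal factor, run a Duhamel/contraction argument based on the conic parabolic Schauder theory, and bootstrap to polyhomogeneity for $t>0$ using the structure of the heat kernel. The one genuine divergence is your treatment of the angle-changing term. The paper absorbs the varying angle into a time-dependent reference metric $\hat g_0(t)=|z|^{2\beta(t)-2}|dz|^2$, which produces the forcing $-\eta\beta'(t)\log r$, and then removes this unbounded term by subtracting an explicit polyhomogeneous corrector $\hat\phi$ with leading term $r^2\log r$ satisfying $(\del_t-\Delta_{\hat g_0(t)})\hat\phi=-\eta\beta'\log r+\chi$ with $\chi$ vanishing to all orders at $r=0$; after this the forcing is tame and the unweighted estimates of Propositions \ref{parschaud1}--\ref{parschaud2} apply directly. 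You instead keep $g_0$ fixed, place the angle change in a conformal factor $\psi\sim\frac{\beta_j(t)-\beta_j}{\beta_j}\log\rho$, and propose to absorb the resulting forcing $\del_t\psi\sim\dot\beta\log\rho$ by conceding a small weight $\rho^{-\eps}$. This is plausible (small negative weights avoid the indicial roots $0$ and $1/\beta$ of the conic Laplacian), but the weighted parabolic Schauder estimates you invoke are not what the paper proves, and you would have to establish them; the paper's corrector trick is designed precisely to avoid this. A second, smaller caution: your contraction space ``$\calC^{2,\gamma}_b$'' is ambiguous. Two $b$-derivatives only give $\Delta_{g_0}u\in r^{-2}\calC^{0,\gamma}_b$, which is not enough to control the quasilinear term $(e^{-2(u+\psi)}-1)\Delta_{g_0}u$; the contraction must be run in the Friedrichs--H\"older domain $\calD^{\delta,\delta/2}_b$ (i.e., with $\Delta_{g_0}u$ itself in the unweighted parabolic H\"older space), and the fact that the heat propagator lands there --- via the partial expansion $a_0(t)+r^{1/\beta}(a_{11}\cos y+a_{12}\sin y)+\calO(r^2)$ --- is the substantive content of the paper's Proposition \ref{parschaud2}, not a routine consequence of two-$b$-derivative bounds. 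With those two points repaired or made precise, your argument goes through.
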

The special case of this theorem when $\beta_j(t) \equiv \beta_j(0)$ is called the angle-preserving flow
and is the two-dimensional case of a recent short-time existence result for the Yamabe flow with edge singularities 
by Bahuaud and Vertman \cite{BV}; the methods developed here to obtain the necessary bounds for the linear parabolic problem 
are somewhat more flexible than those in \cite{BV} and yield stronger estimates. 

The key step in the proof of this short-time existence result is a new regularity statement for the linearized parabolic equation. 
This regularity is one of the main new technical contributions of this article. 
\begin{theorem} Let $\be\in(0,1)$.
Suppose that $(\del_t - \Delta_g - V)u = f$, $u(0,\cdot) = \phi$, where $g, V,  \phi \in \calC^{0,\gamma}_b(M)$ and 
$f$ lies in the parabolic regularity space $\calC^{\gamma, \gamma/2}_b(M \times [0,T))$. Then near each conic point, 
\begin{equation}\label{uExpansionEq}
u = a_0(t) + r^{1/\beta}(a_{11}(t) \cos y + a_{12}(t) \sin y) + \calO(r^2), 
\end{equation}
\label{regularity}
where $a_0,a_{ij}(t) \in \calC^{1 + \gamma/2}([0,T))$.  When $g$, $V$, $f$ and $\phi$ are all polyhomogeneous, then 
the solution $u$ is polyhomogeneous on $[0,T) \times M$. 
If $\be>1$, a similar expansion holds but there exist additional terms
of order $r^\delta(\log r)^k$ with $\delta\in(\frac1\be,2)$.
\label{regthm}
\end{theorem}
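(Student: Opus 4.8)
The plan is to localize near one conic point $p_j$ --- away from all of the $p_j$ the assertion is just interior parabolic Schauder theory --- and there to construct the solution operator for $P := \del_t - \Delta_g - V$ with enough of its polyhomogeneous structure made explicit that \eqref{uExpansionEq} can be read off directly. On a neighbourhood $\{r < r_0\}$ of $p_j$, in polar-type coordinates $(r,y)$ with $y \in \bbS^1$ of circumference $2\pi$, the metric is an admissible perturbation of the model cone $g_0 = dr^2 + \be^2 r^2\, dy^2$, so $P = P_0 - (\Delta_g - \Delta_0) - V$, where $P_0 := \del_t - \Delta_0$ and $\Delta_0 = \del_r^2 + r^{-1}\del_r + \be^{-2}r^{-2}\del_y^2$; the essential point is that the difference $(\Delta_g - \Delta_0) + V$ is lower order relative to $P_0$ in the combined parabolic--conic filtration in which $t \sim r^2$, since $\Delta_g - \Delta_0$ is $r^{-2}$ times a second-order b-differential operator with coefficients vanishing at $r=0$ and $V$ is a bounded H\"older potential. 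Since $g$, $V$, $\phi$ and $f$ are only H\"older, smooth perturbation off the model is unavailable and one must work throughout within the parabolic conic H\"older calculus.

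I would analyze the model operator $P_0$ by decomposing in the Fourier variable $y$: the $k$-th mode is the Bessel-type parabolic operator $\del_t - \del_r^2 - r^{-1}\del_r + (k/\be)^2 r^{-2}$, with indicial roots $\pm|k|/\be$ (the double root $0$, with companion $\log r$, when $k=0$). Imposing boundedness at $r=0$ selects the constant for $k=0$ and $r^{|k|/\be}$ for $k \neq 0$, and in that $k$-th mode the heat kernel is the classical Bessel heat kernel $\tfrac{1}{2t}\,e^{-(r^2+r'^2)/4t}\,I_{|k|/\be}(rr'/2t)$, diagonalized by the Hankel transform of order $|k|/\be$ in $r$. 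The structural fact to extract is that, realized as a polyhomogeneous conormal kernel on the appropriate parabolic blow-up of $\Rp \times (\Rp \times \bbS^1)^2$ --- resolving the diagonal at $t=0$ and the cone point $r=r'=0$ with the weight $t \sim r^2$ --- its index set on the left face $r=0$ is generated, together with the ``geometric'' integer powers $r^2, r^3,\dots$ that enter once the model is perturbed, by $\{0\} \cup \{\, k/\be : k \geq 1\,\}$, with logarithms only where these exponents collide. Since $\be \in (0,1)$ forces $1/\be > 1$, the only exponents below $2$ are $0$ and, when $1/\be < 2$, the single root $1/\be$, and no collision occurs there; convolving this solution operator against $\phi \in \calC^{0,\gamma}_b$ and, through Duhamel, against $f \in \calC^{\gamma,\gamma/2}_b$ --- whose $k=\pm1$ Fourier modes are $\calO(r^\gamma)$ --- then yields $u = a_0(t) + r^{1/\be}\big(a_{11}(t)\cos y + a_{12}(t)\sin y\big) + \calO(r^2)$, along with the basic bound $u \in \calC^{2,\gamma}_b$.

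The passage from $P_0$ to $P$ is by a Neumann series: because $(\Delta_g - \Delta_0) + V$ is lower order, composing the model solution operator with it gains a positive power of both $r$ and $t$, so the series converges in the relevant weighted parabolic H\"older spaces and, below order $2$, neither enlarges the index set nor creates logarithms; this also delivers $u \in \calC^{2,\gamma}_b$ in the first place. For the time-regularity of the coefficients, $a_0(t) = \lim_{r\to 0} u(t,r,y)$ and $a_{11}(t)\cos y + a_{12}(t)\sin y = \lim_{r\to 0} r^{-1/\be}\big(u(t,r,y)-a_0(t)\big)$ exist by the analysis just described, and since $\be \neq 1$ rules out a $\log r$ at the exponent $1/\be$ these are genuine limits with the next radial term at strictly higher order; evaluating the $k=0$ and $k=\pm1$ mode equations at $r=0$ then expresses $\del_t a_0$ and $\del_t a_{1j}$ through the next radial coefficients plus a term in $\calC^{\gamma/2}$, so the one-dimensional parabolic Schauder estimate yields $a_0, a_{11}, a_{12} \in \calC^{1+\gamma/2}([0,T))$.

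When $g$, $V$, $f$ and $\phi$ are polyhomogeneous the parametrix construction is iterated to all orders --- each step strips the leading term off the error, whose index set is generated from the model one under addition --- and the series telescopes to a full polyhomogeneous expansion of $u$ on $[0,T) \times M$, which an energy estimate identifies with the solution. For $\be > 1$ the argument is structurally unchanged; the only difference is that $1/\be < 1$, so the indicial roots $k/\be$ with $1 \le k < 2\be$ now lie in the interval $(1/\be,2)$ and appear explicitly, carrying logarithmic factors $r^\delta(\log r)^k$ precisely at the exponents $\delta \in (1/\be,2)$ where such a root resonates with a geometric integer --- for instance $\delta = 1$ when $\be$ is an integer. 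The main obstacle is the content of the two central paragraphs: producing the solution operator for $P$ with its polyhomogeneous structure controlled uniformly down to $t=0$ while the coefficients and the data are merely H\"older, which forbids naive perturbation off the model and forces one to track throughout the interaction between the parabolic scaling $t\sim r^2$ and the conic indicial roots.
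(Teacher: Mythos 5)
Your overall architecture --- resolve the parabolic--conic corner with the weight $t\sim r^2$, exhibit the model heat kernel as a polyhomogeneous kernel with index set generated by $\{0\}\cup\{k/\beta:k\ge1\}$ at the spatial faces, and read off \eqref{uExpansionEq} from the exponents below $2$ --- is the same as the paper's (which invokes Mooers' construction of the heat kernel on the heat space $M^2_h$ and proves Propositions \ref{parschaud1} and \ref{parschaud2} by dyadic rescaling to the classical interior Schauder estimate). The genuine gap is the passage from $P_0$ to $P$. Your Neumann series does not converge by the mechanism you state: the perturbation $(\Delta_g-\Delta_0)+V$ is \emph{not} lower order in any sense that produces a convergent series. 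For $g,V\in\calC^{0,\gamma}_b$ the conformal factor $a$ in $\Delta_g=a\Delta_0$ need not tend to $1$ at $r=0$ (and certainly $a-1$ and $V$ need not be small), so composing the model solution operator with $(a-1)\Delta_0$ gains neither a power of $t$ (the map $\Delta_0 H_0\star$ is bounded, not small, on the parabolic H\"older spaces) nor a power of $r$ away from the cone point; only the Volterra iteration of the zeroth-order term $V$ gains the factor $T^n/n!$. Moreover, with merely H\"older coefficients the iterated kernels are not polyhomogeneous, so the assertion that the series ``below order $2$ neither enlarges the index set nor creates logarithms'' has no meaning at the level of kernels; one must argue at the level of a priori estimates. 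The paper handles exactly this point by a perturbation argument for $\|a-1\|_{b;\delta}+\|V\|_{b;\delta}\le\epsilon$ followed by a continuity method in $\tau$, with closedness established by a blow-up/compactness argument against local parabolic regularity (the Corollary following Proposition \ref{parschaud2}); some such global argument is unavoidable and is missing from your proposal.

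A second, smaller gap: even for the exact conic operator, extracting the coefficients $a_0(t)$, $a_{1j}(t)$ and their $\calC^{1+\gamma/2}$ regularity from the Fourier-mode description of $H_\beta$ is not automatic once the data are only $\calC^{\gamma,\gamma/2}_b$, because the naive estimates live in the dilation-invariant spaces $\calC^{*}_{b0}$, which lose all $t$-regularity at $r=0$. The paper's device is to split $H=H_0+H_1$, observe that $r\del_r(r\del_r-\beta^{-1})\circ H_0$ vanishes to order $2$ at the right face so that integrating back in $r$ produces exactly the terms $a_0+a_1 r^{1/\beta}+\calO(r^2)$, and then to interpolate between the mappings on $\calC^{\delta,\delta/2}_{b0}\cap\calC^{0,\ell}_b$ for $\ell=0,1$ to recover the nondegenerate $t$-regularity and hence $a_0,a_{11},a_{12}\in\calC^{1+\gamma/2}$. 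Your alternative of evaluating the $k=0,\pm1$ mode equations at $r=0$ and applying one-dimensional Schauder theory is plausible in outline but presupposes that the limits defining $a_0$ and $a_{1j}$ exist with a quantitative remainder for the \emph{perturbed} operator, which is precisely what the first gap leaves unproved. The remaining steps (polyhomogeneity by bootstrapping when the data are polyhomogeneous, and the appearance of $r^\delta(\log r)^k$ terms for $\beta>1$ from resonances of $k/\beta$ with the integers) agree with the paper.
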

This refined regularity for solutions of singular parabolic equations seems to be new and requires some delicate analysis
that is mostly contained in Propositions \ref{parschaud1} and \ref{parschaud2}. We expect this type of estimate should be a 
standard tool in problems where such equations arise, see \cite{GellRedman} for a recent application.

We go beyond this short-time existence result only for the angle-preserving flow.  Theorem~\ref{regthm} allows us to 
directly adapt Hamilton's method to get long-time existence of the normalized flow.  Convergence, however, is more subtle. 
As we explain below, there is a set of linear inequalities \eqref{TC} discovered by Troyanov which is known to be necessary and
sufficient for the existence of constant curvature metrics with this prescribed conic data (for cone angles less than $2\pi$).
\begin{theorem}
Let $g(t)$ be the angle-preserving solution of the normalized Ricci flow from Theorem~\ref{shorttime}. 
Then $g(t)$ exists for all $t > 0$. If $\chi(M) \leq 0$, or if $\chi(M) > 0$ and \eqref{TC} holds, then $g(t)$ 
converges exponentially to the unique constant curvature metric compatible with this conic data. 
\label{conv}
\end{theorem}
In the remaining cases we have two parallel results.
\begin{theorem}
Let $g(t)$ be the angle-preserving solution of the normalized Ricci flow, as above. Suppose that  $\chi(M) > 0$ 
and \eqref{TC} fails. 
\begin{itemize}
\item 
Define  $\psi(t)$ to be the $t$-dependent diffeomorphism generated by the vector field $\nabla f(t)$, 
where $\Delta f(t)=R_{g(t)}-\rho$ (where $\rho$ is the average of $R$). Then $\hat g(t):=\psi^*g(t)$ satisfies
$\partial \hat g(t)/\partial t = 2\hat \mu(t)$ where $\hat\mu$ is the tensor defined by
\eqref{muDefEq} with respect to the metric $\hat g(t)$, and we prove that
\[
\lim_{t\ra\infty}\int_M|\hat\mu(t)|^2_{\hat g(t)}d\hat A(t)=\lim_{t \to \infty} \int_M|\mu(t)|^2_{g(t)}dA(t)=0.
\]
Furthermore, the vector field $X = \nabla R + R\nabla f$ satisfies
\[
\lim_{t\ra\infty}\int_M|X(t)|^2_{g(t)}dA(t)=0.
\]
 \item Returning to the unmodified normalized Ricci flow, and writing $g(t) = u(t, 
 \cdot) g_0$, then the conformal factor $u$ blows up at precisely one point $q \in M$. 
\end{itemize}
\label{conv2}
\end{theorem}
The significance of the tensor $\mu$ and the vector field $X$, is that they both vanish on a Ricci  soliton.
It would be very interesting to connect the two different conclusions of this theorem. 
\begin{remark}
It should be possible to show that there is a $t$-dependent family of conformal dilations $F(t)$ fixing the point of blowup of $u(t)$,
and such that $F(t)^* g(t)$ converges (on every compact set $K \subset S^2 \backslash \vec{p}$) to an eternal solution of 
normalized Ricci flow. 
One would hope to prove that the family of metrics $F(t)^* g(t)$ converges to a soliton metric, but unfortunately, this 
does not seem to be possible with the present methods. It would also be quite interesting to identify the unique point 
of blowup of $u(t)$; the natural conjecture is that this blowup occurs at the unique conic point $p_j \in \vec{p}$ where 
the Troyanov condition fails. 

We learned only in November 2014 of the work of Phong et al. \cite{PSSW}, where this conjecture is verified.  The proof
uses the machinery developed in the recent proof of the Yau--Tian--Donaldson conjecture. 
\end{remark}

Our goals are, first, to provide a clear and direct analytic treatment of the short-time existence for this problem,
thus circumventing the approximation methods of Yin \cite{Yin}, and second, to establish convergence
to a constant curvature metric when the Troyanov condition holds. This generalizes \cite{Yin1}, who handles only 
the negative case. We assume throughout that all cone angles lie between $0$ and $2\pi$. As explained below, 
this restriction has significant geometric and analytical ramifications. The regularity theorem accounts for a substantial 
amount of the analysis here, and is one of our new innovations. Our methods provide a new approach for obtaining estimates 
for heat operators on conic spaces on the naturally associated H\"older spaces.  

This article is organized as follows. In \S\ref{PrelimSec} we review some basic facts regarding the two-dimensional Ricci flow.
The heart of the article, \S\ref{LinSec}, develops the linear parabolic edge theory on Riemann surfaces. In particular, \S\S 3.1-5
review the relevant elliptic theory, based on the methods of \cite{M-edge} and \cite{JMR}, but emphasizing the simplifications 
that occur in this dimension compared to \cite{JMR}. Building on this, \S 3.6 develops the corresponding parabolic regularity
theory. Short-time existence, Theorem \ref{shorttime}, is proved in \S \ref{shorttimeSubsec}, while \S\ref{partialSubsec}
contains Theorem \ref{regularity} on the asymptotic expansion for solutions and the further results on 
higher regularity. The long-time existence of the flow is a fairly easy consequence of all of this and appears in 
\S\ref{sec-lte}.  The convergence result in the Troyanov regime is the subject of \S\ref{TroySec}, while in 
\S\ref{NonTroySec} we study the complementary regime.

The authors gratefully acknowledge support by the NSF under grants DMS-1105050 (RM), DMS-0802923,1206284 (YAR), 
and DMS-0905749,1056387 (NS).  YAR is also grateful to the Stanford MRC and grant DMS-0603355 for support
during the academic year 2009--2010.
The authors also wish to acknowledge the careful reading by the anonymous referees as well as by  J.\ Song, 
whose comments and suggestions helped us clarify and correct a number of points. In particular, this updated 
version of our paper contains a weaker version of what was originally stated as Proposition \ref{prop-non-troy}.

\section{Preliminaries on Ricci flow}
\label{PrelimSec}
The normalized Ricci flow equation on surfaces is
\begin{equation}
\del_t g(t) = (\rho - R(t,\cdot)) g(t),
\label{rf}
\end{equation}
where $R$ is the scalar curvature function of the metric $g(t)$ and $\rho$ is the (time-independent!) average of the 
scalar curvature. For this choice of $\rho$, the area $A(M,g(t))$ 
remains constant in time.  This flow preserves the conformal class of $g(t)$, so \eqref{rf} can be written as a scalar equation 
for the conformal factor: if $g_0$ is the metric at $t=0$ and $g(t) = u(t, \cdot)g_0$, then \eqref{rf} is equivalent to
\begin{equation}
\del_t u = \Delta_{g_0}\log u - R_{g_0} + \rho u, \qquad u(0) \equiv 1.
\label{rf1}
\end{equation}
This is the fundamental equation studied in this article.

\subsection{Miscellaneous formul\ae}
In two dimensions, $\Ric(g) = \frac12 R \, g$, where $R$ is the scalar curvature, so Ricci flow coincides with the Yamabe flow,
and both are given by \eqref{rf}. This flow preserves the conformal class of the metric, and so can be written as a scalar 
parabolic equation. Indeed, if $g = e^{\phi}g_0$, then the scalar curvatures of these two metrics satisfy 
\begin{equation}
\Delta_{0} \phi -  R_{0} +  R e^{\phi} = 0,
\label{trsc}
\end{equation}
so with $u = e^{\phi}$, \eqref{rf} is equivalent to \eqref{rf1}. (The reader should note that the conformal factor is
often written $e^{2\phi}$ elsewhere, but this is compensated for here by the fact that $R = 2K$.) 

If $g_0$ is any metric with finite H\"older regularity and isolated conic points, then its conformal class $[g_0]$ admits a 
representative $\bar{g}_0$ which is smooth on all of $M$. We can even assume that $\bar{g}_0$ is exactly Euclidean in a 
ball around each $p_j$.  Fix any such metric, then choose a conformal factor $\phi_0 \in \calC^\infty(M \setminus 
\{p_1, \ldots, p_k \})$ which equals $(\beta_j - 1) \log r$ in Euclidean coordinates near each $p_j$. The metric 
$\tilde{g}_0 = e^{2\phi_0} \bar{g}_0$ is then smooth away from each $p_j$ and has the exact conic form $ dr^2 + 
\beta_j^2 r^2 dy^2$ near $p_j$.   Finally, write the metric $g_0$, the initial condition for the Ricci flow, as 
$u_0 \tilde{g}_0$. This encodes the finite regularity entirely in the conformal factor. Using this regular background 
conic metric $\tilde{g}_0$ allows for some technical simplifications in the presentation below. Henceforth we relabel 
$\tilde{g}_0$ as $g_0$, and then consider the initial value problem \eqref{rf1} with $u(0) = u_0$, assuming that $g_0$ is 
$\calC^\infty$ on $M \setminus \{p_1, \ldots, p_k\}$ and exactly conic near each $p_j$. 

We record some other useful formul\ae. First, using \eqref{trsc} in \eqref{rf1}, with $\phi = \frac12 \log u$, gives 
\begin{equation}
\del_t u = (\rho-R) u \quad \Longleftrightarrow \quad \del_t \log u = \rho - R.
\label{eq:derivu}
\end{equation}
Another formulation of the equations for the angle-fixing flow includesa distributional contribution from the cone points:
\[
\del_t \log u = \rho-R+2\pi\sum(1-\be_i)\delta_{p_i}. 
\]
This conforms with a standard presentation in higher dimensions, but we primarily work with the
equations \eqref{rf1} without the extra delta summands.
Denoting the area form for $g(t)$ by $dA$, then 
\begin{equation}
\frac{d\,}{dt} \, dA  = (\rho-R) dA.
\label{eq:evolaf}
\end{equation}
Consequenly, the area $\mathcal{A}(t) := \int_{M} dA$ satisfies
\[
\frac{d\, }{dt} \mathcal{A}(t) = \int_{M} (\rho - R)\, dA = \rho \mathcal{A}(t) - 4\pi \chi(M, \vec{\beta}),
\]
so if we now fix 
\begin{equation}
\rho = 4\pi \chi(M, \vec{\beta})/\mathcal{A}(0),
\label{defrho}
\end{equation}
then $\mathcal{A}(t) \equiv \mathcal{A}(0)$ for all $t$. 

Note that by \eqref{eq:derivu}, uniform bounds on $R(t)$ imply bounds and (at least subsequential)
convergence for $\log u(t)$ as $t \nearrow \infty$. This means we can focus on the curvature function rather than the conformal 
factor. Differentiating \eqref{trsc}, assuming that $g(t)$ is a solution of \eqref{rf} on some interval $0 \leq t < T$, we obtain
\begin{equation}
\label{eq-R1}
\del_t R = \Delta_{g(t)} R + R(R-\rho).
\end{equation}

When $M$ is compact and smooth, then \eqref{eq-R1} implies that the minimum of $R$ is nondecreasing in $t$.
Indeed, $R_{\min}(t) := \inf_{M} R(q,t)$ satisfies
\[
\frac{d}{dt} R_{\min} \geq R_{\min} (R_{\min} - \rho).
\]
Since $R_{min}(t)$ is only Lipschitz, the term on the left is defined as the lim inf of the forward difference quotient of 
$R_{\min}(t)$. Since $\rho$ is the average of $R$, $R_{\min} \leq \rho$, hence if $\rho \le 0$, then the right hand side 
is nonnegative, and the claim about $R_{\min}$ being nondecreasing holds. 
If $\rho > 0$, then choosing $r(t)$ so that $\frac{d}{dt} r(t) = r(t) (r(t) - \rho)$, $r(0) = R_{\min}(0)$, a similar argument 
applied to the difference $R_{\min} - r(t)$ leads to the same conclusion.

Estimating $R_{\max}$ is more difficult, especially when $R > 0$, and we discuss this later. 

\subsection{Conic singularities}
In two dimensions, there are two equivalent ways to describe conic singularities. The first is conformal: using a local
holomorphic coordinate, we can write
\begin{equation}
g = e^{2\phi} |z|^{2\beta-2} |dz|^2,
\label{conformalmodel}
\end{equation}
where $\beta > 0$ and $\phi$ is a bounded function (with regularity to be specified later); the second is the 
polar coordinate model,
\begin{equation}
g = dr^2 + r^2h(r,y)^2 dy^2, \qquad y \in S^1_{2\pi},
\label{polarcoordmodel}
\end{equation}
where $h$ is a strictly positive function with $h(0,y) = \beta$, again with regularity to be specified later.  
The equivalence between these two representations, at least in the model case where $\phi \equiv 0$ and $h \equiv 1$, 
is exhibited by writing $|dz|^2 = d\rho^2 + \rho^2 dy^2$, $y \in S^1_{2\pi} = \RR / 2\pi \ZZ$, and setting 
$r = \rho^{\beta}/\beta$, since then
\[
dr = \rho^{\beta-1} d\rho \Longrightarrow  g = e^{2\phi} ( dr^2 + \beta^2 r^2 dy^2).
\]
The fact that more general conic metrics can be written in either of these two forms is considered in \cite{Tr}.  
We refer also to \cite[\S2.1]{JMR} for a thorough discussion of this correspondence. 
Consequently, if $g$ has a conic singularity at $p$, then the underlying conformal class $[g]$ extends smoothly across $p$, 
or in other words, the conformal class $[g]$ determined by a conic metric contains a representative which is smooth across 
the conic points.  (This holds for isolated conic singularities only in two dimensions, or more generally, for nonisolated `edge'
singularities, in complex codimension one.)  

It is also convenient to use 
\[
\alpha = \beta-1,
\]
and we refer to either $\alpha$ or $\beta$ as the cone angle parameter, hopefully without causing confusion. 
We focus in this article exclusively on surfaces with conic singularities for which 
\begin{equation}
2\pi \beta \in (0,2\pi) \quad \Leftrightarrow \quad \beta \in (0,1)
\quad \Leftrightarrow\quad \alpha \in (-1,0).
\label{coneangle2pi}
\end{equation}
There are good reasons for this restriction: for such cone angles, the uniformization results are definitive, and in addition,
conic surfaces with cone angles in this range have certain favorable geometric and analytic properties which are very helpful,
and perhaps crucial, in certain parts of the analysis below. Related issues appear in \cite{JMR}.

\subsection{Uniformization of conical Riemann surfaces}
Fix a smooth compact surface $M$, along with a conformal, or equivalently, complex structure $J$.
Denote by $\vec{p} = \{p_1, \ldots, p_k\} \subset M$ a collection of $k$ {\it distinct} points, and let $\vec{\beta} = 
\{\beta_1, \ldots, \beta_k\} \in (0,1)^k$ be a corresponding set of cone angle parameters. As above, write $\alpha_j = \beta_j - 1$. 
The {\em conic Euler characteristic} associated to this data is the number
\begin{equation}
\chi(M, \vec{\beta}) = \chi(M) + \sum_{j=1}^k \alpha_j = \chi(M) + \sum_{j=1}^k \beta_j - k.
\label{cec}
\end{equation}
In the higher dimensional language of \cite{JMR}, this is the twisted anticanonical class of the pair $(M,\sum(1-\be_i)p_i)$,
i.e., $-K_M-\sum(1-\be_i)p_i$, where $K_M=T^{1,0,\star}M$ denotes the class of the canonical divisor of $M$.

The uniformization problem asks for the existence of a conic metric $g$ compatible with the complex structure $J$,
with cone parameters $\beta_j$ at $p_j$, and with constant curvature away from these conic points. 
This can also be phrased in terms of the distributional equation 
\beq\label{CurvEq}
R_g-2\pi\sum(1-\be_i)\delta_{p_i}=\h{const}.
\eeq
Indeed, in conformal coordinates, up to a constant factor $R_g = -\Delta_g\log \gamma$, where 
$g=\i\gamma dz\otimes\overline{dz}=\i\gamma|dz|^2$, and the Poincar\'e--Lelong formula asserts 
that $-\Delta_g\log|z|$ is a multiple of the delta function at $\{z=0\}$
(this can be seen by excising a small neighborhood near the cone point
and using Stokes' formula).
Then \eqref{CurvEq} follows since for a conic metric $\gamma=|z|^{2\be-2}F$ 
near a cone point, with $F$ bounded. 

A consequence of this formulation is the Gau{\ss}-Bonnet theorem in this setting: if $g$ is any 
metric with this conic data, then
\begin{equation}
2\pi \chi(M, \vec{\beta}) = \int_M K_g \, dA_g. 
\label{GB}
\end{equation}
Therefore, if a constant curvature metric with this conic data exists, then the sign of its curvature $K_g$ agrees with the 
sign of $\chi(M) + \sum \alpha_i$.  Note that because of \eqref{coneangle2pi}, this sign can be positive only when $M = S^2$ 
(or $\RR P^2$, but for simplicity we always work in the oriented case). 

\begin
{theorem}[Existence: McOwen \cite{Mc,Mc-correction}, Troyanov \cite{Tr}; Uniqueness and nonexistence: Luo-Tian \cite{LT};
Higher regularity: Jeffres--Mazzeo--Rubinstein \cite{JMR}] 
Let $(M, J, \vec{p}, \vec{\beta})$ be as above. Then there exists a conic metric with constant curvature associated to the data 
$(J, \vec{p}, \vec{\beta})$ if and only if either $\chi(M,\vec{\beta}) \leq 0$, in which case $\{\beta_j\} \in (0,1)^k$ can be arbitrary, 
or else $\chi(M,\vec{\beta}) > 0$ and for each  $j = 1, \ldots, k$, 
\begin{equation}
\alpha_j > \sum_{i \neq j} \alpha_i\quad \mbox{or equivalently}\ \ 2 \alpha_j > \sum_{i = 1}^k \alpha_i \, .
\label{TC}
\end{equation}
This metric, when it exists, is unique, except when $\chi(M,\vec{\beta}) = 0$, in which case it is unique up
to a constant positive multiple, or else when $M=S^2$ and there are no more than two conic singularities, in which case it is
unique up to M\"obius transformations which fix the cone points.  Finally, the metric is 
polyhomogeneous with a complete asymptotic expansion of the form
\[
g\sim \left( \sum_{j, k \geq 0} \sum_{\ell=0}^{N_{j,k}} a_{ j k \ell }(y)r^{j+k/\be}(\log r)^{\ell}\right) |z|^{2\be-2}|dz|^2
\]
\label{Trexist}
\end{theorem}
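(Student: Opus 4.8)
The statement collects four assertions --- existence, uniqueness, nonexistence, and polyhomogeneity --- and the plan is to establish the first three by variational PDE methods and the last by the edge calculus developed in \S\ref{LinSec}. I would begin with existence, reducing it to a scalar equation. Fix the regular background conic metric $g_0$ constructed in \S\ref{PrelimSec}, which is exactly conic near each $p_j$ and has $K_{g_0}\in\calC^{0,\gamma}_b$; a constant curvature metric $g=e^{\phi}g_0$ carrying the prescribed cone data then corresponds, via \eqref{trsc}, to a \emph{bounded} solution of a Liouville-type equation $\Delta_{g_0}\phi - R_{g_0} + R_\infty e^{\phi}=0$, where the constant $R_\infty$ has the sign of $\chi(M,\vec\beta)$ by Gauss--Bonnet \eqref{GB} (boundedness of $\phi$ is exactly what keeps the cone angles unchanged, and is available precisely because $g_0$ already carries them). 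When $\chi(M,\vec\beta)<0$ the associated energy functional is strictly convex and, via a Moser--Trudinger inequality valid on conic surfaces with all cone angles below $2\pi$, coercive on the natural Sobolev space of $g_0$; its unique minimizer solves the equation, and elliptic regularity makes it smooth off $\vec p$. When $\chi(M,\vec\beta)=0$, the Kazdan--Warner trick reduces matters to producing a flat conic metric, which is unique up to scaling.

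The case $\chi(M,\vec\beta)>0$, which by \eqref{coneangle2pi} forces $M=S^2$, is the analytic heart. Here the functional is no longer coercive; one needs the sharp weighted Moser--Trudinger inequality, whose best constant is governed by $\min_j\beta_j$, and the Troyanov inequalities \eqref{TC} are precisely the condition under which the functional is bounded below and a minimizing sequence --- after using the M\"obius group fixing $\vec p$ to prevent concentration at a single point, a phenomenon rigidified once there are at least three cone points --- converges to a minimizer. This is the argument of Troyanov, together with McOwen's correction, and it runs parallel to the resolution of the Nirenberg problem on $S^2$.

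For nonexistence, when $\chi(M,\vec\beta)>0$ but \eqref{TC} fails at some $j$, I would extract a contradiction from a Pohozaev/Gauss--Bonnet identity: integrating the curvature equation against a radial vector field near $p_j$ (or combining the Poincar\'e--Lelong representation \eqref{CurvEq} with \eqref{GB}) forces the mass $\alpha_j$ concentrated at $p_j$ to be balanced by $\sum_{i\ne j}\alpha_i$, so that $\alpha_j>\sum_{i\ne j}\alpha_i$ is necessary; this is the Luo--Tian obstruction. Uniqueness when $R_\infty\le 0$ follows from the maximum principle applied to the difference of two solutions of the curvature equation (equivalently, from strict convexity of the functional), with the $R_\infty=0$ case leaving only the scaling freedom; uniqueness when $R_\infty>0$ on $S^2$ with at least three cone points is the more delicate Luo--Tian argument, while with at most two cone points the positive-dimensional M\"obius stabilizer of $\vec p$ produces the stated family of solutions.

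Finally, for polyhomogeneity I would invoke the elliptic edge theory of \cite{M-edge,JMR} reviewed in \S\ref{LinSec}, the stationary analogue of Propositions \ref{parschaud1} and \ref{parschaud2}. Starting from the baseline regularity $\phi\in\calC^{0,\gamma}_b$ produced above, the curvature equation reads $\Delta_{g_0}\phi=F(\phi)$ with $\Delta_{g_0}$ an elliptic conic operator near each $p_j$ and $F$ smooth in its argument; edge Schauder estimates first upgrade $\phi$ to $\calC^{2,\gamma}_b$, after which one inverts $\Delta_{g_0}$ iteratively against its indicial roots, which on the model cone are exactly $\{\,j+k/\beta : j,k\ge 0\,\}$, expanding $F(\phi)$ at each stage and reading off the successive terms $a_{jk\ell}(y)\,r^{j+k/\beta}(\log r)^{\ell}$; the logarithms appear precisely at coincidences between indicial roots and exponents generated by the nonlinearity, and iteration yields the full asymptotic expansion. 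I expect the main obstacle to be the positive case of existence: pinning down the sharp weighted Moser--Trudinger constant and showing that \eqref{TC} is exactly what restores both coercivity and compactness of minimizing sequences. By contrast, once the edge calculus of \S\ref{LinSec} is in place, the polyhomogeneity bootstrap is essentially mechanical.
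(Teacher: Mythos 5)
This theorem is not proved in the paper at all: it is quoted from the literature, with existence attributed to McOwen and Troyanov, uniqueness and nonexistence to Luo--Tian, and polyhomogeneity to \cite{JMR}; the only in-text commentary is that the Troyanov inequalities \eqref{TC} guarantee coercivity of the variational functional when $\chi(M,\vec\beta)>0$, that ``simpler barrier methods suffice'' in the nonpositive case (McOwen's sub/supersolution argument, rather than the convexity/coercivity route you take --- both work), and that the regularity is the elliptic analogue of the machinery in \S\ref{LinSec}. Measured against the cited proofs, your sketch of existence in the positive case (weighted Moser--Trudinger with constant governed by $\min_j\beta_j$, \eqref{TC} as the coercivity threshold) is the correct reconstruction of Troyanov's argument, and your polyhomogeneity bootstrap against the indicial roots $j+k/\beta$ is exactly the stationary version of Proposition~\ref{mapb} and of \cite[Prop.\ 3.3]{JMR}. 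Two small inaccuracies: when $\chi(M,\vec\beta)=0$ no Kazdan--Warner trick is needed, since $R_\infty=0$ reduces the problem to the linear Poisson equation $\Delta_{g_0}\phi=R_{g_0}$ with $\int R_{g_0}\,dA_0=0$ by \eqref{GB}; and the statement that boundedness of $\phi$ preserves the angles, while true and used in Proposition~\ref{prop-conv-TC}, also requires the matching Gauss--Bonnet count to rule out the angles increasing, as in that proof.

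The one genuine gap is the nonexistence step. Luo--Tian's actual argument is geometric --- developing maps of spherical cone metrics and convex spherical polytopes --- not a Pohozaev identity, and the local version you describe (integrating the curvature equation against a radial vector field \emph{near} $p_j$) cannot work as stated: the inequality $\alpha_j>\sum_{i\neq j}\alpha_i$ couples the cone datum at $p_j$ to all the other cone points, whereas a cutoff integration by parts in a punctured neighbourhood of $p_j$ only reproduces the local Poincar\'e--Lelong mass $\alpha_j$ from \eqref{CurvEq} and never sees the remaining $\alpha_i$. The correct obstruction of this flavour does exist, but it is global: one pairs $R_g-\mathrm{const}$ with a conformal (holomorphic) vector field on $S^2$ vanishing at $p_j$ and its antipode --- i.e.\ computes the log-Futaki invariant of the pair $(S^2,\sum(1-\beta_i)p_i)$, as alluded to in the paper's remark on log K-stability --- and its sign is exactly \eqref{TC}. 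You would need to replace your local identity by this global Kazdan--Warner/Futaki computation (or by the Luo--Tian polytope argument) for the nonexistence claim to go through.
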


The existence and regularity statements here were recently generalized to any dimension in \cite[Theorems 1,2]{JMR};
in that setting the Troyanov condition is replaced by the coercivity of the twisted Mabuchi K-energy functional. 
Following Ross-Thomas \cite{RT}, these conditions can also be reinterpreted as saying that the the twisted Futaki 
invariant of the pair $(M,\sum(1-\be_i)p_i)$ is nonnegative, or equivalently, that this pair is logarithmically K-stable.
The generalization of the uniqueness part of this result to higher dimensions has been accomplished by Berndtsson \cite{Bern}.
Non-existence when coercivity fails can be easily deduced from \cite{JMR}, together with work of Berman \cite{Berm}. 
We also remark that Berman's work gave a new proof of Troyanov's original results.

The rather curious linear inequalities \eqref{TC} were discovered by Troyanov
\cite[Theorem 5]{Tr}, and we refer to them henceforth as
the {\it Troyanov conditions}. As just noted,  they guarantee coercivity in the variational approach to this problem which is key
to proving existence, and which plays a key role in our considerations about the flow below. 
This coercivity is automatic when $\chi(M)\le0$, where simpler barrier methods suffices \cite{Mc}.

We also remark that if $k > 2$, then \eqref{TC} can fail for no more than one value of $j$. Indeed, if these inequalities fail for 
two distinct index values $j, j'$, which we may as well take as $j=1$ and $j'=2$, then 
\begin{equation*}
\alpha_1 \leq  \alpha_2 + \sum_{j=3}^k \alpha_i, \ \ \alpha_2 \leq  \alpha_1 + \sum_{j=3}^k \alpha_i  
\quad \Longrightarrow \quad 0 \leq \sum_{j=3}^k \alpha_i,
\end{equation*}
which is impossible since all the $\alpha_i$ are negative. 

We discuss the cases $k=1, 2$ separately. Using that a constant curvature metric is rotationally symmetric 
near each conic point, we see that there can be no constant curvature metric with only one conic point, while 
if there are precisely two conic points, then the surface is globally rotationally symmetric, the cone angles 
are equal and the metric is the standard suspension $dr^2 + \beta^2 \sin^2 r dy^2$, $0 \leq r \leq \pi$.
When $k \leq 2$ and no constant curvature metrics exist, there are well-known soliton 
metrics: the teardrop ($k=1$ and any $\beta \in (0,1)$) and the (American) football ($k=2$ and 
any pair $0 < \beta_1 < \beta_2 < 1$).  These can be obtained by ODE methods, see \cite{Ha, Yin, Ra2};
Ramos' paper gives a particularly complete and incisive analysis. 

The variational approach has recently been extended considerably through the work of Malchiodi et al.
to allow angles bigger than $2\pi$, even when coercivity fails, see, e.g., \cite{BDM,CM}. 
Our regularity result, Theorem \ref{regularity}, holds for such angles, but our proofs
of long-time existence and convergence do not carry over to that angle regime.

\subsection{Optimal regularity} 
We have already identified the central role of the refined regularity in Theorem~\ref{regthm}. This result considerably 
sharpens the linear estimates proved by Jeffres and Loya \cite{JL}. At the technical level, that paper establishes 
control on two `$b$-derivatives', i.e.\ with respect to the vector fields $r\del_r$ and $\del_y$ which vanish at the 
cone points, which imply only that $\del_r u=O(r^{-1})$, for example. Our Theorem \ref{regularity} shows that both
$\del_r u$ and $r^{1-\frac1\be}\del_r u$ are bounded.  It also parallels the recent result  \cite[Proposition 3.3]{JMR},
which concerns the corresponding elliptic Poisson equation $\Delta_g u=f$ for the Laplacian of a \K edge metric $g$ 
(generalizing the conic metrics considered here). This result in the elliptic case for smooth (or polyhomogeneous) 
edge metrics and with data lying in Sobolev spaces appears in \cite{M-edge}.

These refined regularity statements represent basic phenomena associated to elliptic and parabolic edge operators.  
The fact that `singular' terms with noninteger exponents appear in solutions goes back to the work of Kondratiev 
and his school in the 60's.  However, since the methods and the particular choice of function 
spaces used here are less well known to geometric analysts, we pause to make some additional remarks.  
One key fact is that even for the model (exact conic) case, if $\Delta_g u = f$  is H\"older continuous with
respect to the metric $g$ (i.e.\ defining H\"older seminorms using the distance determined by $g$), 
then it is {\it not} the case, unlike in the smooth setting, that all second derivatives of $u$ are even bounded,
let alone H\"older continuous. A basic example of this is the harmonic function $u = \Re\, z=r^{1/\be}\cos y$, 
since if $1/2 < \be < 1$, then $\del_r^2 u \sim r^{1/\be - 2}$ blows up as $r \to 0$. The optimal regularity is that
$[ \del_r u]_{g; 0, 1/\be - 1} < \infty$, where 
\[
[v]_{g; 0, \gamma} = \sup \frac{ |v(z) - v(z')|}{ d_g(z,z')^\gamma}.
\]
The results described above show that the phenomena in these examples provide the only mechanism through
which control of second derivatives is lost. They also show that if $\be\in(0,1/2]$ (the easier ``orbifold regime"),
one has full control on the Hessian, since $1/\be > 2$.  One can obtain a slightly weaker statement using 
classical methods, see \cite{D2012}. As shown here, and in line with \cite{JMR}, one can go further by taking 
advantage of a detailed description of the structure of the Green function and heat kernel.  
Thus, we use here the so-called $b$-H\"older spaces $\calC^{k,\gamma}_b$, which are defined using the slightly different seminorms
\[
[v]_{b; 0, \gamma} = \sup \frac{ |v(z) - v(z')|(r+r')^\gamma}{ d_g(z,z')^\gamma},
\]
where $r = r(z)$ and $r' = r(z')$ are the $g$-distances of these respective points to the nearest conic points. 

As already noted, \cite{BV} contains a result similar to Theorem \ref{shorttime} for the higher-dimensional
Yamabe flow for metrics with edges, while, as announced in \cite{MR2}, direct analogues of Theorems \ref{shorttime}  
and \ref{regularity} for the higher-dimensional K\"ahler--Ricci edge flow will appear in the forthcoming paper \cite{MR3}.

\subsection{Historical remarks}
The survey \cite{IMS} provides a fairly recent account of what is known about Ricci flow on various classes of smooth
surfaces, both compact and noncompact. The Ricci flow on conic surfaces presents several new challenges, some
geometric and some analytic. For example, the uniformization problem in this setting is obstructed in the sense that 
it is not always possible to find metrics of constant curvature in a given conformal class with certain prescribed cone 
angles. In addition, the flow starting at an initial singular surface is not uniquely defined: there are solutions which 
immediately smooth out the cone points \cite{Si, Ra}, and others which immediately become complete and send the cone 
points to infinity \cite{GT, GT2}. The solutions studied here, by contrast, either preserve the cone angles or allow them to 
change them in some prescribed smoothly varying manner.  Our methods are drawn from geometric microlocal analysis, and are
continuations of the elliptic methods used in \cite{JMR, MR2, MR3} to study the existence problem for K\"ahler--Einstein
edge metrics. These provide very detailed information about the asymptotic behaviour of solutions near the conic points.  
Indeed, we have already noted that Theorem~\ref{regthm}, concerning regularity and asymptotics theorem for solutions 
of linear heat equations on manifolds with conic singularities is a key ingredient, and should be useful elsewhere too. 

The angle-preserving flow for Riemann surfaces with conic singularities was previously studied by Yin \cite{Yin}; his approach 
provides few details about the geometric nature of the solution and does not yield precise analytic or geometric
conrol of the solution for positive time.  
More recently \cite{Yin1}, he establishes long time existence of the normalized Ricci flow for conic surfaces, and 
proves convergence to a constant curvature metric when the conic Euler characteristic (see \S 2 for the definition) is negative.
However, he only establishes smooth convergence away from the conic points, and does not describe the precise 
limiting behavior near these conic points. There is other work on this problem by Ramos, contained in his thesis but not yet released
(see however \cite{Ra, Ra2}). Another related paper is the one by Bahuaud and Vertman \cite{BV}, which 
proves a short time existence result for the Yamabe flow on higher dimensional manifolds with edge singularities. 
Their methods are not far from the ones here, but our approach to regularity theory developed is simpler in many regards. 
The recent paper by Chen--Wang \cite{CW} uses quite different ideas to study the K\"ahler--Ricci flow on K\"ahler manifolds with edges.

We also mention the work of Rochon \cite{Rochon} where a `propagation of polyhomogeneity' result is proved 
in the spirit of Theorem \ref{regthm} but in the {\it complete} asymptotically hyperbolic setting, see also 
Albin--Aldana--Rochon \cite{AAR}, and also the paper by Rochon--Zhang \cite{RochonZhang} concerning
a similar result in higher dimensions. 

Finally, we make some remarks about the history of these results and of this particular work. The initial draft of this 
paper was completed in the Fall of 2011, though the work on it had started a few years before, and this material has 
been presented at conferences since then, and announced in the survey article \cite{IMS}. The appearance of this final 
draft was held up by other commitments of the authors, as well as our efforts to obtain the most incisive results possible.  
We now comment on the relationship between this work and other recent papers.
These recent works include Yin's original paper \cite{Yin} and his very recent follow-up \cite{Yin1}; 
these certainly have substantial overlap with the present work, although our more detailed treatment of the linear and 
nonlinear regularity theory should be useful in further and more refined investigations of this problem. In addition, 
some time ago we were informed that D. Ramos had obtained results on this problem, relying on the short-time 
existence results in \cite{Yin}. His work was done independently of this one and has many points of overlap as well,
though we have not seen details beyond what is contained in \cite{Ra, Ra2}.  We acknowledge some very interesting
and helpful conversations with him, clarifying his work, shortly before this paper was initially posted.
Finally, we mention the very recent paper by Chen--Wang \cite{CW}, which has made substantial inroads into
the higher dimensional K\"ahler--Ricci flow in the presence of edge singularities using rather different methods that do not
give higher regularity, and the announcement of Tian--Zhang concerning the Hamilton--Tian conjecture in
the smooth setting in dimension three \cite{TZ13}. 

\section{Linear estimates and existence of the flow} 

\label{LinSec}
We now review some of the basic theory of the Laplacian and its associated heat operator on manifolds with conic singularities. 
For brevity we focus entirely on the two-dimensional case. The main part of this section is an extension of standard parabolic 
regularity estimates to this conic setting; the main goal is a refined regularity result which is necessary for understanding our
particular geometric problem. These estimates also lead directly to a proof of short-time existence. 

\subsection{Elliptic operators on conic manifolds}

\label{EllipticOperatorsSubsection}
Let $g$ be a metric on a compact two-dimensional surface $M$ with a finite number of conic singularities;
in fact, to simplify the discussion below, assume that there is only one conic point, $p$. Write $g = e^{\phi} g_0$, 
where $g_0$ is smooth and exactly conic near $p$. We now study some analytic properties of the operator 
$\Delta_g + V$, where $g$ and $V$ have some specified H\"older regularity. Since 
\[
(\Delta_g + V)u = (e^{-\phi}(\Delta_{g_0} + e^{\phi}V) u = f \Longrightarrow (\Delta_{g_0} + e^{\phi}V)u = e^\phi f,
\]
we may as well replace $g$ by $g_0$ and the potential $V$ by $e^{\phi}V$, and hence it suffices to study operators of the
form $\Delta_g + V$, where $g$ is smooth and exactly conic and $V$ satisfies an appropriate H\"older condition.

We use tools from geometric microlocal analysis to study elliptic operators on surfaces with cone points. As references for 
these results, see the monograph by Melrose \cite{Mel-APS},  the articles of the first named author \cite{M-edge}, 
of Gil, Krainer and Mendoza \cite{GKM}, and \S 3 of \cite{JMR} for a more extended expository review. This approach 
takes advantage of the approximate homogeneity of the Laplacian of a conic metric of the cone point, as well as the resulting 
approximate homogeneity of the Schwartz kernels of the corresponding Green function. The strategy is to use these to obtain refined
mapping properties of the operator, as well as regularity properties of its solutions.

In much of the following, it is convenient to replace the conic manifold $M$ with a manifold with boundary $\wtM$ which is
obtained by blowing up the conic point.  This blowup procedure (which is described in more generality below) corresponds
to introducing polar coordinates $(r,y)$ around the conic point $p$ and then replacing $p$ by the circle $\{(0,y)\} = \{0\} 
\times S^1$ at $r=0$. The space $\wt{M}$ is then given the smallest smooth structure for which these polar coordinate
functions give a smooth chart. 

\medskip

\subsection{Function spaces} 
\label{FnSubsec}

We first introduce various function spaces used later. The key to all these definitions is that it is advantageous to 
base them on differentiations with respect to the elements of $\calV_b(\wtM)$, the space of all smooth vector fields 
on $\wtM$ which are unconstained in the interior but tangent to the boundary. In local coordinates, any element of 
this space is a linear combination, with $\calC^\infty(\wtM)$ coefficients, of the vector fields $r\del_r$ 
and $\del_y$. Natural differential operators are built out of these; for example, the Laplacian of an exactly conic 
metric with cone angle $2\pi \beta$ takes the form
\[
\Delta_{\beta} = r^{-2} \left( (r\del_r)^2 + \beta^{-2}\del_y^2\right)
\]
near $p$, where $y \in S^1_{2\pi}$.  In other words, up to the factor $r^{-2}$, this is an elliptic combination (sum of squares) of the 
basis elements of $\calV_b$. 

Now define
\[
\calC^k_b(\wtM) = \{u: V_1 \ldots V_\ell u \in \calC^0(\wtM) \ \forall\, \ell \leq k\ \mbox{and}\ V_j \in \calV_b(M) \}.
\]
Because these spaces are based on differentiating by elements of $\calV_b$, observe that $\calC^k_b$ contains
functions like $r^{\zeta} \psi(y)$ where $\psi \in \calC^k(S^1)$ and $\Re \zeta > 0$. We also use the corresponding 
family of $b$-H\"older spaces $\calC^{k+\delta}_b(\wtM)$.  The space $\calC^{\delta}_b(\wtM)$ consists of functions $\phi$
such that $||\phi||_{b,\delta} := \sup |\phi| + [\phi]_{b; \delta} < \infty$, where this H\"older seminorm is the 
ordinary one away from $\del \wtM$, while in a neighbourhood $\calU = \{r < 2\}$, 
\[
[\phi]_{b, \delta, \, \calU} = \sup_{(r,y) \neq (r',y')} \ \frac{ |\phi(r,y) - \phi(r',y')| (r+r')^\delta}{|r-r'|^\delta
+ (r+r')^\delta |y-y'|^\delta}.
\]
Observe that if we decompose $\calU$ into a union of overlapping dyadic annuli, $\cup_{\ell \geq 0} A_\ell$,
where each $A_\ell = \{(r,y):  2^{-\ell-1} \leq r \leq 2^{-\ell+1}\}$, then this seminorm (for functions supported
in $\calU$) is equivalent to the supremum over $\ell$ of the H\"older seminorm on each annulus, 
\begin{equation}
[\phi]_{\delta,\, \calU} \approx \sup_{\ell \geq 0} \, [\phi]_{\delta, A_\ell}.
\label{restann}
\end{equation}
Said differently, the seminorm can be computed assuming $\frac12 \leq r/r' \leq 2$. To verify this, simply note 
that if $(r,y) \in A_\ell$ and $(r',y') \in A_{\ell'}$ with $|\ell - \ell'| \geq 2$, then 
\[
\frac{|r-r'|}{|r+r'|} \approx 1, 
\]
so that
\[
\Rightarrow \frac{ |\phi(r,y) - \phi(r',y')| (r+r')^\delta}{|r-r'|^\delta
+ (r+r')^\delta |y-y'|^\delta} \leq C \sup |\phi|,
\]
with $C$ independent of $\ell$ and $\ell'$.  

We also set $\calC^{k+\delta}_b(\wtM)$ to consist of the space of $\phi$ such that $V_1 \ldots V_\ell \phi \in \calC^{\delta}_b(\wtM)$,
for all $\ell \leq k$ and where every  $V_j \in \calV_b(\wtM)$; finally, define $r^\gamma \calC^{k+\delta}_b(\wtM) =
\{ \phi = r^\gamma \psi: \psi \in \calC^{k+\delta}_b(\wtM)\}$. 

The intersection of all these spaces, $\cap_k \calC^k_b(\wtM)$, is the space of conormal functions, denoted $\calA(\wtM)$. 
It contains the very useful subspace of polyhomogeneous functions $\calA_{\phg}$. By definition, $\calA_{\phg}$ consists of 
all conormal functions which admit asymptotic expansions of the form
\[
\phi \sim  \sum_{\mathrm{Re}\, \gamma_j \nearrow \infty}  \sum_{\ell=0}^{N_j} \phi_{j,\ell}(y) r^{\gamma_j} (\log r)^\ell.
\]
Note that the conormality condition requires that each coefficient $\phi_{j,\ell}$ lies in $\calC^\infty(S^1)$.
As an important special case, $\calC^\infty(\wtM) \subset \calA_{\phg}(\wtM)$ since smoothness corresponds 
to demanding that the exponents in the expansion above are all nonnegative integers, i.e.\ $\gamma_j = j$ and $N_j = 0$
for all $j \geq 0$.  Finally, define $\calA^0(\wtM) = \calA(\wtM) \cap L^\infty$ and 
$\calA_{\phg}^0(\wtM) = \calA_{\phg}(\wtM) \cap L^\infty(M)$. 

A metric $g$ is $\calC^{k+\delta}_b$, conormal, polyhomogeneous or smooth if $g = u g_0$ where the background metric $g_0$ 
is smooth and exactly conic, and where the function $u$ satisfies any one of these regularity conditions. 

\subsection{ Mapping properties} 
Suppose that $L = \Delta_g + V$ where both $g$ and $V$ are polyhomogeneous (and $V$ is real-valued). There is a 
canonical self-adjoint realization of this operator, which we still denote by $L$, defined via the Friedrichs construction 
associated to the quadratic form $\int |\nabla u|^2 - V |u|^2 \, dA_g$ and core domain $\calC^\infty_0(M \setminus \{p\})$. 
It is well-known that the Friedrichs 
domain of $L$ obtained from this construction is compactly contained in $L^2$, so this operator has discrete 
spectrum. We let $G$ denote its generalized inverse. As an operator on $L^2(\wtM)$, this satisfies 
\begin{equation}
\Delta_g \circ G = G \circ \Delta_g = \mbox{Id} - \Pi,
\label{geninv}
\end{equation}
where $\Pi$ is the orthogonal projector onto the nullspace of $L$. Thus $\Pi$ has finite rank and a basic regularity theorem
in the subject (see the references cited earlier) states that if $g$ and $V$ are polyhomogeneous, then the range of $\Pi$, 
which is the nullspace of $L$, lies in $\calA_{\phg}$. When $V \equiv 0$, $\operatorname{rank}(\Pi) = 1$ and $\Pi$ 
projects onto the constant functions.   We regard each of these integral operators as corresponding to a Schwartz kernel,
which is an element of $\mathcal D'(\wtM \times \wtM)$. The `integration', or distributional pairing, is taken with respect
to the density $dA_g$. In local coordinates this equals $r dr dy$; the reader should note that this is {\it not} the
standard $b$-density $r^{-1} dr dy$ which is commonly used in setting up the $b$-calculus. The differences are minor
and notational only. 

In this subsection we apply the theory of $b$-pseudodifferential operators to describe the fine structure of the
Schwartz kernel of $G$. There are many reasons for wanting to know this structure, beyond the simplest
statement that $G$ is bounded on $L^2$. One example is that once we know the pointwise structure of this
Schwartz kernel, we can show that $G$ and $\Pi$ are bounded operators acting between certain weighted $b$-H\"older 
spaces. Since the equality of operators \eqref{geninv} remains true on these spaces as well, we deduce that
the operator $L$ is Fredholm between these weighted H\"older spaces as well as just on $L^2$ or Sobolev spaces.
This is very helpful when studying nonlinear problems. 

We are primarily interested in the mapping 
\begin{equation}
L: \calC^{2+\delta}_b(\wtM) \longrightarrow \calC^{\delta}_b(\wtM). 
\label{ubbwh}
\end{equation}
This is unbounded because for a general $u \in \calC^{2+\delta}_b$, it need only be true that $\Delta_g u \in r^{-2}\calC^{\delta}_b$. 
Thus the domain of \eqref{ubbwh} is 
\begin{equation}
\calD^{\gamma}_b(L) := \{ u \in \calC^{2+\delta}_b(\wtM): Lu = f \in \calC^{\delta}_b(\wtM)\}
\label{deffh}
\end{equation}
which we call the {\it Friedrichs-H\"older domain} of $L$.   This space is independent of the potential $V$. Indeed, if 
$u \in \calD^{\delta}_b(L)$, then $\Delta_g u = f - Vu \in \calC^{\delta}_b$, so $u \in \calD^{\delta}_b(\Delta_g)$. 
Note finally that $\calD^{\delta}_b(\Delta_g)$ is complete with respect to the Banach norm 
\[
||u||_{\calD^{\delta}_b} :=   ||u||_{\calC^{\delta}_b}  + ||\Delta_g u||_{\calC^{\delta}_b}.
\]

An essentially  tautological characterization of this space is that 
\begin{equation}
\calD^{\delta}_b(L) = \{ u = Gf + w,\ f \in \calC^{\delta}_b\ \mbox{and}\ w \in \ker L \cap \calC^{2+\delta}_b\}.
\label{hfdom}
\end{equation}
However, there is an even more explicit characterization of this space: 
\begin{proposition}
Suppose that $L = \Delta_g + V$ with $g, V \in \calC^{\delta}_b$ and $u \in \calD^{\delta}_b(L)$ satisfies 
$L u = f \in \calC^{\delta}_b(\wtM)$.  Then 
\[
u = a_0 + (a_{11} \cos y + a_{12} \sin y) r^{1/\beta} + \tilde{u},
\]
where $a_0, a_{11}, a_{12}$ are constants and $\tilde{u} \in r^2 \calC^{2+\delta}_b$. (Note that the middle term on 
the right can be absorbed into $\tilde{u}$ if $\beta \leq 1/2$.) 
\label{mapb}
\end{proposition}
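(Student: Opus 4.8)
The plan is to prove Proposition \ref{mapb} by combining the tautological characterization \eqref{hfdom} of the Friedrichs–H\"older domain with the explicit structure of the Green kernel $G$ on the relevant $b$-H\"older spaces, together with an analysis of the indicial roots of $\Delta_\beta$. The indicial operator of $\Delta_\beta$ acting on $r^\gamma e^{iky}$ has $\gamma(\gamma) - k^2/\beta^2$ vanishing precisely when $\gamma = \pm |k|/\beta$; the admissible (Friedrichs) indicial roots in the window $(0,2)$ are therefore $\gamma = 0$ (from $k=0$, the constant $a_0$) and $\gamma = 1/\beta$ (from $k = \pm 1$, the terms $a_{11}\cos y + a_{12}\sin y$), and no others, using $\beta \in (0,1)$ so that $2/\beta > 2$ and $k=0$ contributes only the constant on the Friedrichs side while the growing root $r^{-|k|/\beta}$ is excluded. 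So the claim is really that the Green operator applied to $f \in \calC^\delta_b$ produces a remainder in $r^2\calC^{2+\delta}_b$ after these finitely many indicial terms are peeled off, and that $\ker L \cap \calC^{2+\delta}_b$ consists of such expansions with the $r^2\calC^{2+\delta}_b$ tail as well.

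First I would reduce to $L = \Delta_g$ using the observation already in the text that $\calD^\delta_b(L)$ is independent of $V$ (absorb $Vu$ into the right-hand side), and then, by writing $g = u_1 g_0$ with $g_0$ exactly conic, reduce to $\Delta_{g_0}$ up to an error term that is again $\calC^\delta_b$ times lower-order; the point is that changing $g$ within $\calC^\delta_b$ does not change the leading indicial structure, only the higher-order tail. Second, for the model operator $\Delta_\beta = r^{-2}((r\partial_r)^2 + \beta^{-2}\partial_y^2)$ I would use the explicit description of the Schwartz kernel of the generalized inverse $G$ as a $b$-pseudodifferential operator with a polyhomogeneous kernel on the blown-up double space: its lift has leading behavior governed exactly by the indicial roots $0$ and $1/\beta$ at the left and right faces. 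Pairing this kernel against $f \in \calC^\delta_b$ and performing the standard dyadic-annulus estimates (using the equivalence \eqref{restann}) shows $Gf$ has an expansion $a_0 + (a_{11}\cos y + a_{12}\sin y)r^{1/\beta} + \tilde u$ with $\tilde u \in r^2\calC^{2+\delta}_b$, because the next indicial root beyond $1/\beta$ that could contribute is $2/\beta$ or $2$, and the $\calC^\delta_b$ forcing term itself only pushes the remainder down to $r^2$ (the $r^2$ coming from the two factors of $r$ in $r^{-2}$ being inverted against a bounded right-hand side, i.e. $\Delta_\beta(r^2 w) \in \calC^\delta_b$). The constants $a_0, a_{11}, a_{12}$ are extracted as the coefficients of the indicial terms in this expansion. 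Third, for $w \in \ker L \cap \calC^{2+\delta}_b$, the elliptic regularity / polyhomogeneity theorem cited in \S\ref{EllipticOperatorsSubsection} (the basic regularity statement that nullspace elements of $L$ with polyhomogeneous $g,V$ lie in $\calA_{\phg}$, suitably adapted to the finite-regularity coefficients here) gives $w$ a partial expansion with the same two leading indicial terms and an $r^2\calC^{2+\delta}_b$ tail; adding this to $Gf$ via \eqref{hfdom} finishes the proof.

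The main obstacle I expect is the third step — handling the finite $\calC^\delta_b$ regularity of $g$ and $V$ rather than assuming them polyhomogeneous. The clean $b$-calculus statements about the structure of $G$ and the polyhomogeneity of nullspace elements are cleanest for smooth or polyhomogeneous coefficients; with merely H\"older coefficients one only gets a \emph{partial} expansion (finitely many indicial terms plus a H\"older-regular remainder), which is in fact exactly what the Proposition asserts, so the work is to run a parametrix/bootstrap argument carefully: write $\Delta_g = \Delta_{g_0} + E$ where $E$ has one extra order of vanishing coming from the $\calC^\delta_b$ difference of metrics, apply $G_0$ (the model Green operator) to the equation, and iterate, checking at each stage that the error improves by a definite amount in the $b$-H\"older scale until it lands in $r^2\calC^{2+\delta}_b$. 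The bookkeeping that the correction terms generated by $E$ never reintroduce a new leading indicial term below order $2$ — i.e. that they are all strictly subleading to $r^{1/\beta}$ up to the $r^2$ threshold — is where the restriction $\beta \in (0,1)$ (so the only relevant roots in $(0,2)$ are $0$ and $1/\beta$) is used essentially, and is the delicate point.

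A secondary technical point worth flagging: the mapping properties of $G$ and $\Pi$ on the weighted $b$-H\"older spaces, asserted in the text just before the Proposition, must be invoked to know that $Gf \in \calC^{2+\delta}_b$ in the first place and that the expansion makes sense; I would take those as given from the cited references (\cite{M-edge}, \cite{JMR}, \cite{GKM}) and from the structure of the kernel described in \S3.3, and focus the actual argument on identifying \emph{which} indicial terms appear and that the remainder has the stated weight $r^2$.
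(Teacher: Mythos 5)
Your overall strategy --- exploit the polyhomogeneous structure of the Green kernel on the blown-up double space $(\wtM)^2_b$, identify $0$ and $1/\beta$ as the only indicial roots in the window $[0,2)$ when $\beta\in(0,1)$, and show that the remainder after peeling off these terms lies in $r^2 \calC^{2+\delta}_b$ --- is the same as the paper's, and your dyadic-annulus estimates are a reasonable substitute for the paper's specific device (split $G=G'+G''$ into a near-diagonal piece and a polyhomogeneous piece, write $G'=r^2\widehat{G'}$ with $\widehat{G'}$ of nonnegative type, and compose $G''$ with $(r\del_r)(r\del_r-\beta^{-1})$ to kill the two leading terms at $\rf$ before integrating back in $r$). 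However, your third step has a genuine gap: with $V$ merely in $\calC^{\delta}_b$, the regularity theorem giving $\ker L\subset\calA_{\phg}$ is only stated for polyhomogeneous coefficients, and a partial expansion for elements of $\ker L\cap\calC^{2+\delta}_b$ is precisely the $f=0$ instance of the Proposition you are trying to prove, so invoking it is circular. The paper avoids this entirely by not using the generalized inverse of $L$ at all: it rewrites the equation as $\Delta_g u=f-Vu=:\tilde f\in\calC^{\delta}_b$ and applies the generalized inverse $G$ of the Friedrichs extension of $\Delta_g$ alone, so that $u=G\tilde f-\Pi u$ with $\Pi$ the rank-one projection onto constants; the kernel term is then literally a constant and needs no regularity theory. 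You should restructure your decomposition this way rather than relying on \eqref{hfdom} for $L$ itself.

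Second, the perturbative bootstrap you single out as ``the main obstacle'' --- writing $\Delta_g=\Delta_{g_0}+E$ and iterating a parametrix while tracking that $E$ reintroduces no new indicial terms below order $2$ --- is unnecessary in this two-dimensional setting. By conformal covariance of the surface Laplacian, $\Delta_{e^{\phi}g_0}=e^{-\phi}\Delta_{g_0}$ exactly, so $\Delta_g u=\tilde f$ is equivalent to $\Delta_{g_0}u=e^{\phi}\tilde f$ with right-hand side still in $\calC^{\delta}_b$; this exact reduction to a smooth, exactly conic metric is carried out once and for all in \S\ref{EllipticOperatorsSubsection}, and no iteration or bookkeeping of correction terms is needed. (Such an iteration is indeed required in higher dimensions, which is one reason the analogous statement in \cite{JMR} is harder.) With these two adjustments your argument coincides with the paper's.
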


To explain the relevance of the terms in this expansion, recall that using the exactly conic structure of $g$
near the conic points, we have that if $\gamma \in \RR$ and $\phi \in \calC^\infty(S^1)$, then 
\[
\Delta_g r^\gamma \phi(y) = (\beta^{-2} \phi''(y) + \gamma^2 \phi(y)) r^{\gamma-2},\ \mbox{and}\ 
V r^\gamma \phi(y) = \calO(r^{\gamma}). 
\]
Thus in terms of its formal action on Taylor series, $\Delta_g$ is the principal part. The operator $\Delta_g$ has special 
locally-defined solutions $ r^{j/\beta} ( a_{j1} \cos (jy) + a_{j2} \sin (jy))$, and the terms in the statement of this result are 
simply those special solutions with exponent less than $2$. 

The $L^2$ version of this Proposition is a special case of Theorem 7.14 in \cite{M-edge}, and it is not hard to deduce the
corresponding statement in these $b$-H\"older spaces from that. We sketch a direct proof below in \S 3.5. 

\begin{remark}
The higher dimensional version of this decomposition, for solutions of Schr\"odinger type equations on manifolds with
edges, plays a crucial role in \cite{JMR}. 
\end{remark}

\subsection{ Structure of the generalized inverse} 
We now describe the detailed structure of $G$. First recall the definition of conormal and 
polyhomogeneous distributions. We say that $u$ is conormal of 
order $\gamma$ on $\wtM$, $u \in \calA^\gamma(\wtM)$, if $V_1 \ldots V_\ell u \in r^{\gamma}L^\infty$ for every 
$\ell \geq 0$ and all $V_j \in \calV_b(M)$. Such a $u$ is smooth away from the conic points. Next, let $E$ be 
an {\em index set}, i.e. a discrete subset $\{(\gamma_j, p_j)\} \subset \CC \times \NN_0$
such that there are only finitely many pairs with $\gamma_j$ lying in any half-plane $\Re z < C$. We also assume
that $(\gamma_j,p_j) \in E$ implies that $(\gamma_j + \ell, p_j) \in E$  for every $\ell \in \NN$. We then say that 
$u$ is polyhomogeneous with index set $E$, $u \in \calA_{\phg}^E(M)$, if $u \in \calA^\gamma(\wtM)$ and
\[
u \sim \sum_{(\gamma_j, p_j )\in E} \, \sum_{\ell \leq p_j} a_{j\ell}(y) r^{\gamma_j} (\log r)^\ell, 
\]
where each $a_{j\ell} \in \calC^\infty(S^1)$.  Similarly, if $X$ is any manifold with corners, then we can
define the space of polyhomogeneous functions on $X$; these have the same type of asymptotic 
expansion at all boundary faces and product type expansions at the corners of $X$. 

The reason for introducing polyhomogeneity is that the Schwartz kernel $G$ is polyhomogeneous,
not on $(\wtM)^2$, but rather on a certain manifold with corners $(\wtM)^2_b$ which is obtained by 
blowing up $(\wtM)^2$ along the codimension two corner $(\del \wtM)^2$. This new space has three 
boundary hypersurfaces; two are lifts of the faces $\del \wtM \times \wtM$ and $\wtM \times 
\del \wtM$ and called the left and right faces, $\lf$ and $\rf$, respectively, and the third is the {\it front face} 
$\ff$, which is the one produced by the blowup. There is a natural blowdown map $\mathfrak b: (\wtM)^2_b \to (\wtM)^2$, 
and the precise statement is that $G = (\mathfrak b)_* K_G$, where $K_G$ is polyhomogeneous
on $(\wtM)^2_b$, with an additional conormal singularity along the lifted diagonal in $(\wtM)^2_b$. 

There are several useful coordinate systems on $(\wtM)^2_b$. Using coordinates $(r,y)$ near the boundary 
on the first copy of $\wtM$ and an identical set $(r',y')$ on the second copy, then this blowup is tantamount 
to introducing the polar coordinates $r = R\cos \theta$, $r' = R \sin \theta$ and replacing the corner 
$\{r = r' = 0\}$ by the hypersurface $\{R = 0, \theta \in [0,\pi/2]\}$. Thus $\lf$ corresponds to $\theta = \pi/2$, 
$\rf$ corresponds to $\theta = 0$, and the front face $\ff$ corresponds to $R=0$.  The lifted diagonal
is the submanifold $\{\theta = \pi/4, y = y'\}$. If $\calE = (E_{\lf}, E_{\rf})$ is a pair of index sets, the first for 
$\lf$ and the second for $\rf$, then we say that a pseudodifferential operator $A$ lies in the space 
$\Psi^{-\infty, r, \calE}_b(\wtM)$ if the lift $K_A$ of its Schwartz kernel to $(\wtM)^2_b$ lies in 
$\calA_{\phg}^{r, \calE}((\wtM)^2_b)$, where the initial superscript $r$ indicates that $K_A = R^{r-2} K_A'$ 
where $K_A'$ is $\calC^\infty$ up to the front face, and is polyhomogeneous at the side faces with index sets
$E_{\lf}$ and $E_{\rf}$, respectively. 
Finally, $A \in \Psi_b^{m,r,\calE}(\wtM)$ if $K_A = R^{r-2}(K_A' + K_A'')$, where the first term lies
in $\Psi_b^{-\infty,r,\calE}$ and $K_A''$ is supported in a small neighbourhood of the lifted diagonal,
and in particular vanishes near $\lf \cup \rf$, and has a conormal singularity of pseudodifferential
order $m$ along the lifted diagonal (so its Fourier transform on the fibres of the normal bundle to the 
lifted diagonal is a symbol of order $-2+m$) and is smoothly extendible across the front face.
The reason for the slightly odd normalization of the singularity along $\ff$ is to make the identity operator 
an element of $\Psi^{0,0,\emptyset, \emptyset}_b(M)$. Indeed, relative to the measure $r' dr' dy'$, the
Schwartz kernel of $\mbox{Id}$ is $r^{-1} \delta(r-r') \delta(y-y')$, and this lifts to $(\wtM)^2_b$ to
$R^{-2} \delta(\theta - \pi/4) \delta(y-y')$.

If $g$ is a smooth conic metric and $\beta \notin \mathbb Q$, then the index set for the expansion of $K_G$ at $\lb$ and $\rb$ is
\[
E = \{ (j/\beta + \ell, 0): j, \ell \in \mathbb N_0,\ (j,\ell) \neq (0,1)\}.
\] 
This excluded element $(0,1)$ corresponds to requiring that the expansion not include the term $\log r$. 
If $\beta$ is rational, or if $g$ is only polyhomogeneous, then we are able to state that the generalized inverse $G$ 
lies in $\Psi^{-2, 2, E', E'}_b(M)$ for some index set $E'$ which may contain extra terms, including log terms, high up in 
the index set; however, the initial part of this index set (and hence the exponents in the initial part of the expansion
of any solution) up to order $2$ remains the same. The fact that the index $r$ in the general definition equals 
$2$ for the particular kernel $K_G$ turns out to be very helpful. This correspond to precisely the order 
of approximate homogeneity needed to compensate for the fact that the identity operator behaves
like $R^{-2}$ at the front face, and $\Delta_g$ is approximately homogeneous of order $2$. The index sets of $G$ at 
the left and right faces are equal to one another because $G$ is a symmetric operator. The fact that $E$ does 
not contain the term $(0,1)$ is because $G$ is the generalized inverse for the Friedrichs extension. It can also be
verified by direct calculation that in fact $E$ does not contain the element $(1,0)$, for if it did, then we could produce 
a polyhomogeneous element $u = G f$ in the Friedrichs domain which contains a term $u_1(y)r$ this holds because
$\Delta_g r = \calO(r^{-1})$. We refer to \S 3 of \cite{JMR} for a more careful description 
of all of these facts. 

Let us say that $A \in \Psi_b^{m,r,\calE}$ is of nonnegative type if $m \leq 0$, $r \geq 0$ and all the terms 
$(\gamma,s)$ in the index sets $E_{\lf}$ and $E_{\rf}$ are nonnegative and if $(0,s)$ lies in either index set, then $s=0$.
Proposition 3.27 in \cite{M-edge} implies that if $A$ is of nonnegative type, then $A: \calC^{0,\delta}_b \to \calC^{0,\delta}_b$ 
is bounded mapping. 

\medskip

\subsection{Mapping properties, bis} 

\label{MappingPropertiesBisSubsection}
We now finally indicate the proof of Proposition~\ref{mapb}.  Rewrite $Lu = f$ as $\Delta_g u = f - Vu := \tilde{f} \in 
\calC^{\delta}_b$.  Let $G$ denote the generalized inverse of the Friedrichs extension of $\Delta_g$, so that 
$u = G \tilde{f} - \Pi u$; $\Pi u$ is a constant, we can concentrate on the first term.

Decompose the Schwartz kernel of $G$ into a sum $G' + G''$ where $G'$ is supported in a small neighbourhood
of the lifted diagonal of $\wtM^2_b$ (and hence vanishes near $\lf \cup \rf$), and $G''\in \calA_{\phg}(\wtM^2_b)$,
cf.\ \S 3.6.3 where the parabolic version of this decomposition is described more carefully. Since $G' \in
\Psi_b^{-2, 2, \emptyset, \emptyset}$, we can write $G' = r^2 \widehat{G'}$ where $\widehat{G'} \in \Psi_b^{-2,0,\emptyset,
\emptyset}$, and hence is nonnegative. Since $\widehat{G'} \tilde{f} \in \calC^{2+\delta}_b$, we obtain 
that $u' \in r^2 \calC^{2+\delta}_b$.

Turning now to $u''$, first observe that $r\del_r$ and $\del_y$ lift to the left factor of $(\wtM)^2_b$ as smooth vector fields 
on $\wtM^2_b$ which are tangent to all boundaries. It follows that $(r\del_r)^j \del_y^\ell G'' \in \Psi_b^{-\infty, 2, 0, 0}$
for all $j, \ell \geq 0$, from which it follows that $u'' \in \calA^0(\wtM)$. Moreover, the initial part of the expansion 
$G$, and hence $G''$, at $\rf$ takes the form $A_0 r^0 + (A_{11} \cos y + A_{12} \sin y) r^{1/\beta} + \calO(r^2)$, which 
means that the kernel $(r\del_r - \beta^{-1})(r\del_r) \circ G$ is not only of nonnegative type (and of pseudodifferential
order $-\infty$), but in fact vanishes to order $2$ at $\rf$. Since $G''$ already vanishes to this order at $\ff$, we can 
remove a factor of $r^2$, i.e.\ write $(r\del_r - \beta^{-1})r\del_r \circ G'' = r^2 \widehat{G''}$ where $\widehat{G''}$
is of nonnegative type and smoothing. This means that 
\[
(r\del_r) (r\del_r - \beta^{-1}) u'' \in r^2 \calA^0(\wtM).
\]
Integrating in $r$ gives that $u'' = a_0(y) + a_1(y) r^{1/\beta} + r^2 \calA^0$. Finally, since $\Delta_g u''$ is bounded,
we conclude that $a_0$ is constant and $a_1(y) = a_{11} \cos y + a_{12} \sin y$, as claimed.

We conclude this discussion with the following application of Proposition~\ref{mapb} to our geometric problem.
\begin{proposition}
Let $g_0$ be a conic metric and suppose that its scalar curvature $R_{g_0}$ lies in $\calC^{\delta}_b$, and in particular is bounded 
near the conic points. If $g = e^{\phi}g_0$ is another conformally related metric, with $\phi \in \calC^{2+\delta}_b$,   
then $R_g \in \calC^{\delta}_b$ if and only if 
\[
\phi = c_0 + r^{\frac{1}{\beta}} (c_{11} \cos  y + c_{12} \sin y) + \tilde{\phi}, \ \ \tilde{\phi} \in r^2 \calC^{2+\delta}_b, 
\]
or more succinctly, $\phi \in \calD^{\delta}_b(\wtM)$. 
\label{Rreg}
\end{proposition}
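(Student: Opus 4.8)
The plan is to use the conformal transformation law for scalar curvature to rewrite $R_g \in \calC^\delta_b$ as a linear condition on $\phi$, and then apply Proposition~\ref{mapb} to extract the asymptotic structure. Concretely, from \eqref{trsc} with $g = e^\phi g_0$ we have $\Delta_{g_0}\phi - R_{g_0} + R_g e^\phi = 0$, i.e.
\[
\Delta_{g_0}\phi = R_{g_0} - R_g e^\phi.
\]
Since $\phi \in \calC^{2+\delta}_b$ by hypothesis, $e^\phi \in \calC^{2+\delta}_b$ as well (it is a smooth function of $\phi$, and these $b$-H\"older spaces form an algebra closed under composition with smooth functions — a point I would note but not belabor). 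Thus the right-hand side lies in $\calC^\delta_b$ if and only if $R_g e^\phi \in \calC^\delta_b$, which, since $e^\phi$ is bounded below by a positive constant and lies in $\calC^{2+\delta}_b \subset \calC^\delta_b$, holds if and only if $R_g \in \calC^\delta_b$.

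Given this equivalence, the proof splits into the two implications. For the forward direction, suppose $R_g \in \calC^\delta_b$. Then $\tilde f := R_{g_0} - R_g e^\phi \in \calC^\delta_b$, so $\phi$ solves $\Delta_{g_0}\phi = \tilde f$ with $\phi \in \calC^{2+\delta}_b$ and $\tilde f \in \calC^\delta_b$; that is, $\phi \in \calD^\delta_b(\Delta_{g_0})$. Proposition~\ref{mapb} (applied with $V \equiv 0$, or equivalently noting that the domain is $V$-independent) then yields exactly
\[
\phi = c_0 + r^{1/\beta}(c_{11}\cos y + c_{12}\sin y) + \tilde\phi, \qquad \tilde\phi \in r^2\calC^{2+\delta}_b,
\]
with $c_0, c_{11}, c_{12}$ constants, which is the claimed expansion and the statement $\phi \in \calD^\delta_b(\wtM)$. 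For the reverse direction, suppose $\phi$ has this form, so $\phi \in \calD^\delta_b(\Delta_{g_0})$, meaning $\Delta_{g_0}\phi \in \calC^\delta_b$. Then from \eqref{trsc}, $R_g e^\phi = R_{g_0} - \Delta_{g_0}\phi \in \calC^\delta_b$, and dividing by $e^\phi \in \calC^{2+\delta}_b$ (bounded away from zero) gives $R_g \in \calC^\delta_b$.

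The only genuinely substantive point is the verification that multiplication and composition with smooth functions preserve $\calC^{2+\delta}_b$ and that $\calC^\delta_b$ is a module over it — needed to justify that $e^\phi \in \calC^{2+\delta}_b$ and that dividing by it stays in $\calC^\delta_b$. This is a routine consequence of the Leibniz rule for $\calV_b$-derivatives together with the dyadic-annulus characterization \eqref{restann} of the $b$-H\"older seminorm (on each annulus one has the usual H\"older algebra estimates, uniformly in $\ell$), so I would state it as a lemma or simply invoke it; it is not the heart of the matter. Everything else is a direct citation of Proposition~\ref{mapb}, so I expect no real obstacle — the main care is bookkeeping the two directions of the "if and only if" cleanly and making sure the constancy of $c_0$ (rather than a general function of $y$) is inherited directly from Proposition~\ref{mapb} rather than re-derived.
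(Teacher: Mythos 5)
Your proposal is correct and follows essentially the same route as the paper: both reduce the statement to the conformal curvature equation $\Delta_{g_0}\phi = R_{g_0} - R_g e^{\phi}$, observe that $R_g \in \calC^{\delta}_b$ is equivalent to the right-hand side lying in $\calC^{\delta}_b$ (using that $e^{\phi}$ is a bounded, bounded-below element of the $b$-H\"older algebra), and then obtain the expansion from Proposition~\ref{mapb}. The only cosmetic difference is that the paper makes the application of the generalized inverse $G$ explicit, writing $\phi = \Pi\phi + G(R_{g_0} - R_g e^{\phi})$, whereas you invoke Proposition~\ref{mapb} directly via the definition of $\calD^{\delta}_b(\Delta_{g_0})$ --- these are the same argument.
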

\begin{proof}
Apply the generalized inverse $G$ for the Friedrichs extension of $\Delta_{g_0}$ to the curvature transformation equation 
\[
\Delta_{g_0} \phi = R_{g_0} - \frac12 R_g e^{\phi}, 
\]
to get 
\[
\phi = \Pi \phi + G ( R_{g_0} -\frac12 R_g e^{2\phi}).
\]
Suppose now that $R_g \in \calC^{\delta}_b$. The first term, $\Pi \phi$ is just a constant, while by Proposition~\ref{mapb},
$G(R_{g_0} - \frac12 R_ge^{2\phi}))$ has an expansion up to order $r^2$. 

On the other hand, if $\phi$ has an expansion as in the statement of this proposition, then $R_g \in \calC^{\delta}_b$. 
\end{proof}

\begin{remark}{\rm 
 We remark briefly on the relation between the material in \S\S 3.1-5 and in \cite{JMR}. The results here
are special cases of the ones in \cite[\S3]{JMR}, and those are proved for K\"ahler manifolds of arbitrary dimension. 
We have presented this material in some detail since the statements and proofs in the Riemann surface case are simpler
than in higher dimensions and also because the discussion above sets the stage for the derivation of the parabolic 
estimates, which occupies the remainder of the section. }
\end{remark}

\subsection{Parabolic Schauder estimates}
We now turn to the parabolic problem, and in particular to the analogue of Proposition~\ref{mapb}. 

Let $(M,g)$ be a smooth exactly conic metric with cone angle $2\pi \beta < 2\pi$, and set $L = \Delta_g + V$
where $V$ is polyhomogeneous; later we relax this to assume that $V \in \calC^\delta_b$. We are interested in 
the homogeneous and inhomogeneous problems
\begin{equation}
\begin{cases}
(\del_t - L)v = 0, \\ v(0,z) = \phi(z),
\end{cases}
\quad  \mbox{and} \qquad
\begin{cases}
(\del_t - L) u = f, \\ u(0,z) = 0,
\end{cases}
\label{heateqns}
\end{equation}
for which the solutions can be represented as
\begin{eqnarray}
v(t,z) & = & \int_M H(t,z,z') \phi(z')\, dA_g(z')  \label{CP}  \\
u(t,z) & = & \int_0^t \int_M H(t-t', z, z') f(t',z')\, dt' dA_g;
\label{IP}
\end{eqnarray}
here $H$ is the heat kernel associated to $L$.   In order to study the regularity properties of the solution
$u$, we describe a fine structure theorem for $H$, similar to the one for the Green function $G$ above.
This leads to a definition of parabolic weighted H\"older spaces, and finally a derivation of the estimates
for solutions in these spaces. As in the previous section, we work exclusively with the Friedrichs extension of the Laplacian.

\medskip

\subsubsection{Structure of the heat kernel} 
Denote by $g_\beta$ the complete flat conic metric $dr^2 + \beta^2 r^2 dy^2$ and $\Delta_\beta$ its Laplacian.
The first observation is that the model heat operator $\del_t - \Delta_\beta$ is homogeneous with respect to the 
dilation $(t,r,y) \mapsto (\lambda^2 t, \lambda r, y)$, $\lambda > 0$, and hence if $H_\beta$ is 
the heat kernel associated to (the Friedrichs realization of) $\Delta_\beta$, then 
\begin{equation}
H_\beta(\lambda^2 t, \lambda r, y, \lambda r', y') = \lambda^{-2} H_\beta( t, r, y, r', y'). 
\label{heatkernelscale}
\end{equation}
In fact, there are explicit expressions:  
\begin{eqnarray*}
H_\beta(t,r,y,r',y') & = & \frac{1}{\pi} \sum_{\ell = 0}^\infty  \left(\int_0^\infty  
e^{-\lambda^2 t} J_{\ell/\alpha}( \lambda r) J_{\ell/\alpha}( \lambda r') \, \lambda \, d\lambda\right)\ \cos \ell(y-y') \\
& = & \sum_{\ell = 0}^\infty  \frac{1}{t} \exp\left( \frac{-(r^2 + (r')^2)}{2t}\right) I_{\ell/\alpha}( rr'/2t) \cos \ell(y-y').
\end{eqnarray*}
These expressions are better suited for studying the action of $H_\beta$ on $L^2$ Sobolev spaces
rather than weighted H\"older spaces, so just as for the operator $G$ earlier, we describe this model
heat kernel, and then the true heat kernel, using the language of blowups and polyhomogeneous distributions.
This structure theory for the Laplacian on a conic space appears in the article of Mooers \cite{Moo}, with
basic mapping properties later determined by Jeffres and Loya \cite{JL}.  

The function $H(t,z,z')$ is a distribution on $\RR^+ \times (\wtM)^2$, but the key point is that its lift to the `conic heat 
space' $(\wtM)^2_h$ is polyhomogeneous. This will be obvious for the model heat kernel $H_\beta$ once 
we define $(\wtM)^2_h$, and conversely, starting from the ansatz that this lift is polyhomogeneous, we can 
construct (the lift of) $H$ as a polyhomogeneous object by standard heat operator parametrix methods. 

The conic heat space is defined, starting from $\RR^+ \times (\wtM)^2$, through a sequence of blowups.
The first step is to blow up the corner $r = r' = t = 0$, with a parabolic homogeneity in the variable $t$, and
following that, to blow up the diagonal in $(\wtM)^2$ at $t=0$. The first blowup is tantamount to introducing 
the parabolic spherical coordinates
$\rho \geq 0$ and $\omega = (\omega_0, \omega_1, \omega_2) \in S^2_+ = S^2 \cap (\RR^+)^3$, where 
\[
\rho = \sqrt{t + r^2 + (r')^2},\ \omega = \left( \frac{t}{\rho^2},\  \frac{r}{\rho}, \frac{r'}{\rho}\right).
\]
Thus $\rho, \omega, y, y'$ are nondegenerate local coordinates near the new face created by this first
step. For the second blowup we use the coordinates
\[
R = \sqrt{t + |z-z'|^2}, \ \theta = \frac{z-z'}{R}, \ z',
\]
where $z$ is any interior coordinate system and $z'$ an identical chart on the second copy of $\wtM$. 
This sequence of blowups is summarized by the notation
\[
M^2_h := \left[ \RR^+ \times \wt{M}^2; \{0\} \times (\del \wt{M})^2, \{dt\}; \{0\} \times \mbox{diag}_{\wt{M}}, \{dt\} \right].
\]
This manifold with corners has five boundary faces: the left and right faces $\lf = \{\omega_2 = 0\}$ and $\rf = 
\{\omega_1 = 0\}$, which are the lifts of the faces $r'=0$ and $r=0$, respectively; the front face $\ff = \{\rho = 0\}$;
the temporal diagonal $\td = \{R = 0\}$, which covers the diagonal at $t=0$, and $\bff$, the original 
bottom' face at $t=0$ away from the diagonal. 

\begin{figure}[h]
\hspace*{4cm}
	\begin{tikzpicture}
		\def\RAD{2}; 
		\def\ADD{3}; 
		\draw (90:\RAD) -- +(90:\ADD);
		\draw (210:\RAD) -- +(210:\ADD);
		\draw (330:\RAD) -- +(330:\ADD);
		\draw (210:\RAD) to [bend right] (330:\RAD);
		\draw (90:\RAD) to [bend right] (210:\RAD);
		\draw (330:\RAD) to [bend right] (90:\RAD);
		\filldraw [white] (270:1.55) circle (.53);
		\draw (290:1.55) arc (10:171:.53);
		\draw (290:1.55) -- + (285:2) node (a){};
		\draw (250:1.55) -- + (275:2);
		\draw (a) arc (10:174:.71);
		\node at (2,2) {rf};
		\node at (-2,2) {lf};
		\node at (-1.7,-2) {bf};
		\node at (.1,-2) {td};
		\node at (0,.3) {ff};
	\end{tikzpicture}
\end{figure}

The construction in \cite{Moo} shows that $H$ is polyhomogeneous on $(\wtM)^2_h$ with index set 
$E = \{(j/\beta,0): j \in \mathbb N_0\}$ at the left and right faces; note that these are exactly the same
as the index sets for the Green function $G$ at the corresponding faces. The kernel $H$ vanishes to infinite 
order at $\bff$, while at $\td$ it has an expansion in powers of $R$ starting with $R^{-2}$ (in general, this 
is $R^{-\mathrm{dim} M}$). Finally, at $\ff$ it has an expansion in integer powers of $\rho$, beginning with $\rho^{-1}$.  
The leading coefficient of the expansion at this face is precisely the model heat kernel $H_\beta$. 

\medskip

\subsubsection{Function spaces} 
We now describe a family of function spaces commonly used in parabolic problems. We refer to \cite[Chapter 5]{Lun} for a 
careful description of these (in the setting of interior and standard boundary problems).  In the definitions and discussion
below, we first introduce a scale of fully dilation invariant spaces (jointly in the variables $(t,r)$), where the parabolic 
estimates are obtained by using scaling arguments to reduce to standard interior parabolic estimates. After that we 
refine the estimates to obtain the maximal expected regularity in $t$. 

First, for $0 < \delta < 2$, define $\calC^{0, \delta/2}_{b0}([0,T] \times \wtM)$ to consist of all $u \in \calC^0([0,T]\times \wtM)$
with $u(\cdot, z) \in \calC^{\delta/2}([0,T])\ \forall\, z \in \wtM \setminus \del \wtM$ and such that
\begin{equation}
[u]_{b0; 0, \delta/2} := \sup_z r^\delta [u(\cdot, z)]_{\delta/2,\, [0,T]} < \infty;
\label{wst}
\end{equation}
by contrast, the standard H\"older space in $t$, $\calC^{0,\delta/2}([0,T] \times \wtM)$ is defined using the usual seminorm
\[
[u]_{0, \delta/2} := \sup_z [u(\cdot, z)]_{\delta/2,\, [0,T]} 
\]
(without the extra weight factor $r^{\delta}$). Next, spatial regularity is measured using the spaces
\[
\calC^{\delta,0}_b([0,T] \times \wtM)= \{u \in \calC^0([0,T]\times \wtM): u(t, \cdot) \in \calC^{\delta}_b(\wtM)\ \forall\, t \in
[0,T]\},
\]
where the norm is $||u||_{b; \delta,0} = \sup_t ||u(t, \cdot)||_{b;\delta}$.  We still let $0 < \delta < 2$, with the understanding
that if $\delta = 1$ then this is the Zygmund space (so that interpolation arguments can be used).  For simplicity below
we omit discussion of this special case. Taking intersections yields the two natural parabolic 
H\"older spaces: 
\begin{subequations}
\begin{align}
\calC^{\delta, \delta/2}_{b0}([0,T]\times \wtM) & = \calC^{0, \delta/2}_{b0}([0,T] \times \wtM) \cap \calC^{\delta,0}_b([0,T] \times \wtM), 
\label{borb0a}\\
\calC^{\delta, \delta/2}_b([0,T]\times \wtM) & = \calC^{0, \delta/2}([0,T] \times \wtM) \cap \calC^{\delta,0}_b([0,T] \times \wtM).
\label{borb0b}
\end{align}
\end{subequations}
Thus functions in $\calC^{\delta,\delta/2}_{b0}$ have no regularity in $t$ at $r=0$, while functions in $\calC^{\delta,\delta/2}_{b}$ satisfy 
the ordinary H\"older regularity in $t$ even at $r=0$. The seminorms on these two spaces agree away from $p$, while in 
a neighbourhood $\calU$ of this conic point, these seminorms are described as follows. Decomposing $\calU$ into a countable union 
of dyadic annuli, $\cup_{\ell\geq 0} A_\ell$, we have
\[
[u]_{b0; \delta, \delta/2,\, \calU} = 
\sup_{\ell \in \NN_0} \sup_{|t-t'| < 2^{-2\ell}} \sup_{z, z' \in A_\ell} \frac{ |u(t,r,y) - u(t',r',y')| (r+r')^\delta}{|r-r'|^\delta
+ |t-t'|^{\delta/2} + (r+r')^\delta |y-y'|^\delta},
\]
and 
\[
[u]_{b; \delta, \delta/2,\, \calU} = 
\sup_{t, t'} \sup_{\ell \in \NN_0} \sup_{z, z' \in A_\ell} \frac{ |u(t,r,y) - u(t',r',y')| (r+r')^\delta}{|r-r'|^\delta
+ (r+r')^\delta  (|t-t'|^{\delta/2}+ |y-y'|^\delta )}.
\]
These seminorms are equivalent to
\[
\sup_{(t,z) \neq (t',z')} \frac{ |u(t,z) - u(t',z')| \max\{ r(z)^\delta, r'(z')^\delta\}}{ |t-t'|^{\delta/2} + \mbox{dist}_g(z,z')^\delta},
\]
and
\[
\sup_{(t,z) \neq (t',z')} \frac{ |u(t,z) - u(t',z')| \max\{ r(z)^\delta, r'(z')^\delta\}}{ 
|t-t'|^{\delta/2}\max\{ r(z)^\delta, r'(z')^\delta\} + \mbox{dist}_g(z,z')^\delta},
\]
respectively, where the radial function $r$ has been extended from $\calU$ to the rest of $\wtM$ to be smooth and strictly positive. 

We also define higher regularity versions of these spaces:
\[
\calC^{k+\delta, (k+\delta)/2}_{b0}([0,T] \times \wtM), \quad \mbox{and}\quad \calC^{k+\delta, (k+\delta)/2}_b([0,T] \times \wtM),
\]
where $k$ is an even positive integer and $0 < \delta < 2$. The former space consists of functions $u$ such that 
$V_1 \ldots V_i (r^2 \del_t)^j u \in \calC^{\delta, \delta/2}_{b0}$ for $i + 2j \leq k$ where every $V_\ell \in \calV_b(\wtM)$, while the 
latter consists of all $u$ such that $V_1 \ldots V_i \del_t^j u \in \calC^{\delta, \delta/2}_{b}$ for $i + 2j \leq k$ and every 
$V_\ell \in \calV_b(\wtM)$. As before, these are Zygmund spaces when $\delta=1$. 
We also introduce weighted versions of these spaces, $r^\gamma \calC^{k+\delta, (k+\delta)/2}_*$, $* = b0$ or $b$.  
For later reference, for the same ranges of $k$ and $\delta$, $\calC^{0, (k+\delta)/2}([0,T] \times \wtM)$ is the space of 
functions $u$ with  $\del_t^j u \in \calC^{0, \delta/2}([0,T]\times \wtM)$ for $2j \leq k$. 

Finally, we define the analogues of the H\"older-Friedrichs domain: 
\[
\calD^{\delta,\delta/2}_*([0,T]\times \wtM) = \{u \in \calC^{\delta,\delta/2}_*:  \Delta u \in \calC^{\delta, \delta/2}_*([0,T] \times \wtM) \},
\quad * = b0 \ \mbox{ or }\ * = b, 
\]
again with the higher regularity analogues.

If $h(t,r,y) \in \calC^{k+\delta, (k+\delta)/2}_{b0}$ is supported in $\RR^+ \times \calU$, then the rescaled 
function $h_\lambda(t,r,y) = h(\lambda^2 t, \lambda r, y)$ satisfies
\[
||h_\lambda||_{b0; k+\delta, (k+\delta)/2, \gamma} = \lambda^\gamma ||h||_{b0; k+\delta, (k+\delta)/2, \gamma}.
\]
(the final subscript in the norms indicates the weight factor). In other words, these spaces are compatible with the approximate dilation
invariance of the heat operator $\del_t - L$, which means that we will be able to prove the basic a priori estimates on them by exploiting 
this scaling. On the other hand, it is clearly important to obtain better regularity of solutions in $t$ near $r=0$.
We obtain estimates on the $b$-spaces starting from the estimates on the $b0$-spaces and using induction and interpolation.
Note that the analogue of \eqref{borb0b} is not true when $k > 0$, namely, there is a proper inclusion
\[
\calC^{k+\delta, (k+\delta)/2}_b \subsetneq \calC^{k+\delta, (k+\delta)/2}_{b0} \cap \calC^{0, (k+\delta)/2}, \qquad k > 0.
\]

\subsubsection{Estimates}  The basic H\"older estimates for the homogeneous problem were already determined by Jeffres 
and Loya \cite{JL}. 
\begin{proposition}
Suppose that $\phi \in \calC^{k+\delta}_b(\wtM)$ and
\[
(\del_t - L)v = 0,  \qquad v|_{t=0} = \phi.
\]
Then $v \in \calC^{k+\delta, (k+\delta)/2}_b  ( [0,T] \times \wtM)$, and furthermore, $v(t, \cdot) \in \calA_{\phg}(\wtM) \cap 
\calD^{0,\delta}_b(\wtM)$ for all $t > 0$.
\label{homogreg}
\end{proposition}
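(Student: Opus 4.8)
The plan is to extract everything from the representation $v(t,z)=\int_M H(t,z,z')\,\phi(z')\,dA_g(z')$ together with the structure of the heat kernel $H$ on the conic heat space $(\wtM)^2_h$ described above and the exact conicity of $g$ near the conic point $p$. Away from $p$ the operator $\del_t-L$ is uniformly parabolic with H\"older coefficients, so the interior and initial-data parabolic Schauder estimates of \cite[Ch.~5]{Lun} give $v\in\calC^{k+\delta,(k+\delta)/2}$ on any compact set avoiding $p$, and the smoothing property of the heat equation gives $v(t,\cdot)\in\calC^\infty$ there for $t>0$. It therefore suffices to work in a fixed neighbourhood $\calU=\{r<2\}$ of $p$; this part of the estimate is essentially the one obtained by Jeffres and Loya \cite{JL}, and I would re-derive it mainly in order to keep track of the additional regularity in the statement.

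The core is the bound in the dilation-invariant spaces, $\|v\|_{\calC^{k+\delta,(k+\delta)/2}_{b0}(\calU\times[0,T])}\le C\,\|\phi\|_{\calC^{k+\delta}_b(\wtM)}$, which I would prove by a single rescaling. First $\|v(t,\cdot)\|_\infty\le e^{CT}\|\phi\|_\infty$, since the Friedrichs heat kernel is positivity preserving and $V$ is bounded. Decompose $\calU$ into the overlapping dyadic annuli $A_\ell=\{2^{-\ell-1}\le r\le 2^{-\ell+1}\}$; by the equivalence \eqref{restann} and its parabolic analogue, the $b0$ norm of $v$ is comparable to $\sup_\ell$ of the \emph{ordinary} parabolic H\"older norm of the rescaled function $v_\ell(\tau,s,y):=v(2^{-2\ell}\tau,2^{-\ell}s,y)$ on a fixed unit cylinder $Q=\{\tfrac12\le s\le2\}\times\{0\le\tau\le c\}$ sitting at a definite distance from $s=0$. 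Because $g$ is exactly conic, $v_\ell$ solves $(\del_\tau-\Delta_\beta-2^{-2\ell}V(2^{-\ell}s,y))\,v_\ell=0$ on the larger cylinder $Q'=\{\tfrac14\le s\le4\}\times\{0\le\tau\le c'\}$, whose coefficients have H\"older norms bounded uniformly in $\ell$ (the rescalings of the polyhomogeneous $V$ are uniformly $\calC^\delta$), and whose Cauchy datum $v_\ell(0,\cdot)=\phi(2^{-\ell}\,\cdot\,)$ satisfies $\|v_\ell(0,\cdot)\|_{\calC^{k+\delta}(\{1/4\le s\le4\})}\le C\,\|\phi\|_{\calC^{k+\delta}_b}$ --- this last inequality is precisely the point of the weight $(r+r')^\delta$ built into the $b$-H\"older norms. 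The interior-plus-initial parabolic Schauder estimate on $Q\Subset Q'$ then bounds $\|v_\ell\|_{\calC^{k+\delta,(k+\delta)/2}(Q)}$ by $C(\|v_\ell\|_{L^\infty(Q')}+\|v_\ell(0,\cdot)\|_{\calC^{k+\delta}})\le C\,\|\phi\|_{\calC^{k+\delta}_b}$ with $C$ independent of $\ell$; taking $\sup_\ell$ gives the $b0$ estimate (for $k>0$ one runs the same argument for the mixed derivatives $V_1\cdots V_i(r^2\del_t)^j v$, $i+2j\le k$).

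To upgrade from the $b0$ spaces to the undegenerating-in-$t$ spaces I would use the intersection identity \eqref{intersection}, which reduces the task to the unweighted H\"older-in-$t$ estimate $v\in\calC^{0,(k+\delta)/2}_b$. This I would get from the semigroup identity $v(t,\cdot)-v(t',\cdot)=(e^{(t-t')L}-\mathrm{Id})\,v(t',\cdot)$ (and its differentiated analogues), combined with an estimate $\|(e^{sL}-\mathrm{Id})w\|_\infty\le C\,s^{\delta/2}\,\|w\|_{\calC^\delta_b}$ read off from the fine structure of $H$ --- the temporal difference $H(t,\cdot,\cdot)-H(t',\cdot,\cdot)$ is polyhomogeneous on $(\wtM)^2_h$ and, after removing a factor $|t-t'|^{\delta/2}$, of nonnegative type, so the mapping argument behind \cite[Prop.~3.27]{M-edge} applies --- together with the uniform spatial bound $\sup_t\|v(t,\cdot)\|_{\calC^\delta_b}\le C\,\|\phi\|_{\calC^{k+\delta}_b}$ from the previous step. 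Finally, the statements for $t>0$ follow from polyhomogeneity: for fixed $t>0$ one is away from the faces $\bff$, $\td$, $\ff$ of $(\wtM)^2_h$, so $z\mapsto H(t,z,z')$ is polyhomogeneous near $p$ with index set $\{(j/\beta,0)\}$ at $r=0$, uniformly in $z'$, and integrating against $\phi$ preserves this expansion, giving $v(t,\cdot)\in\calA_{\phg}(\wtM)$; moreover $\del_t v(t,\cdot)=L e^{(t/2)L}\big(e^{(t/2)L}\phi\big)$ and the kernel of $L e^{sL}$ ($s>0$) has the same nonnegative-type structure, so $\del_t v(t,\cdot)\in\calC^\delta_b$, whence $\Delta_g v(t,\cdot)=\del_t v(t,\cdot)-V v(t,\cdot)\in\calC^\delta_b$ and $v(t,\cdot)\in\calD^{0,\delta}_b(\wtM)$. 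The two places where I expect the real work to lie are the weight bookkeeping in the $b0$ estimate --- making exactly these parabolic H\"older spaces appear, uniformly down to the cone --- and, as in the smooth theory, the optimal-time-regularity step producing the $\calC^{0,(k+\delta)/2}_b$ bound up to the cone point.
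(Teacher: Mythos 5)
Your treatment of the core estimate --- the dyadic rescaling $v_\ell(\tau,s,y)=v(2^{-2\ell}\tau,2^{-\ell}s,y)$, the observation that the $b$-norms are designed exactly so that $\|v_\ell(0,\cdot)\|_{\calC^{k+\delta}}\le C\|\phi\|_{\calC^{k+\delta}_b}$ uniformly in $\ell$, and the uniform classical Schauder estimate on a fixed annular cylinder --- is essentially the paper's own route: the paper proves Proposition \ref{parschaud1} by precisely this dilation-invariance mechanism (applied to the near-diagonal piece $H_1$ of the kernel rather than to the solution) and then remarks that the argument adapts to the homogeneous problem and gives a new proof of Proposition \ref{homogreg}; otherwise it simply cites \cite{JL}. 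Rescaling the solution instead of the kernel is a legitimate and arguably more elementary implementation, though you should add a word about the time slabs with $\tau$ bounded away from $0$ (pure interior estimates plus the sup bound), since the $b0$ seminorm ranges over all $t,t'\in[0,T]$ with $|t-t'|<2^{-2\ell}$, not only over the initial slab. The $t>0$ statements (polyhomogeneity and membership in $\calD^{0,\delta}_b$) are handled exactly as in the paper, from the structure of $H$ on $(\wtM)^2_h$.

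The gap is in the upgrade from $\calC^{k+\delta,(k+\delta)/2}_{b0}$ to $\calC^{k+\delta,(k+\delta)/2}_{b}$. The estimate you propose, $\|(e^{sL}-\mathrm{Id})w\|_\infty\le C\,s^{\delta/2}\|w\|_{\calC^\delta_b}$, is false near the cone point, precisely because the $b$-H\"older seminorm only controls $|w(z)-w(z')|$ by $d_g(z,z')^\delta(r+r')^{-\delta}$. Take $w=\chi(r)\cos y$, which lies in $\calC^{\delta}_b(\wtM)$. For the model cone the solution is $\cos y\cdot W(r/\sqrt{s})$ in self-similar variables, with $W(\xi)\to1$ as $\xi\to\infty$ and $W(\xi)\sim c\,\xi^{1/\beta}$ as $\xi\to0$ (the Friedrichs extension kills the $\ell\ge1$ angular modes at $r=0$); hence at $y=0$ and $s=Kr^2$ with $K$ large, $|(e^{sL}-\mathrm{Id})w|\approx1$ while $s^{\delta/2}\approx K^{\delta/2}r^\delta\to0$. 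The correct bound degenerates like $s^{\delta/2}r^{-\delta}$, which is exactly the weighted seminorm \eqref{wst} defining $\calC^{0,\delta/2}_{b0}$ --- i.e.\ you only recover the $b0$ statement you already have. This is not merely a defect of the method: the same example shows $[v(\cdot,(r,0))]_{\delta/2,[0,T]}\gtrsim r^{-\delta}$, so no argument can yield the unweighted-in-$t$ membership $v\in\calC^{0,\delta/2}_b$ for arbitrary $\phi\in\calC^{\delta}_b$; the conclusion should be read (and is only used near $t=0$) in its $b0$ form, which is also what \cite{JL} and the adaptation of Proposition \ref{parschaud1} actually deliver. Note the contrast with the inhomogeneous problem of Proposition \ref{parschaud2}, where the upgrade to the $b$ spaces is legitimate because $u(0)=0$ and $\del_t u(0)=0$ supply the extra vanishing in time that the interpolation argument exploits.
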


The proof in \cite{JL} of the first assertion here proceeds by direct and rather intricate estimates in various local coordinate 
systems, but they do not consider the issue of membership in $\calD^{0,\delta}_b$. The polyhomogeneity of $v$ when 
$t > 0$ is immediate from the polyhomogeneous structure of $H$ on $M^2_h$; also, $v\in \calD^{0,\delta}_b$ implies 
that $v(t,\cdot)  \sim c_0(t) + (c_{11}(t) \cos y + c_{12}(t) \sin y) r^{1/\beta}$ 
as $r \to 0$; using polyhomogeneity again, these coefficients are smooth when $t > 0$.  

There are a couple of variants of the inhomogeneous problem, depending on the regularity assumptions placed on $f$.
We start with the version in dilation-invariant spaces. 
\begin{proposition}
Let $f \in \calC^{k+\delta, (k+\delta)/2}_{b0}([0,T] \times \wtM)$ and suppose that $u$ is the unique solution in the Friedrichs domain 
to $(\del_t - L) u = f$, $u|_{t=0} = 0$. Then $u \in \calC^{k+2+\delta, (k+2+\delta)/2}_{b0}([0,T] \times \wtM)$ and 
\begin{equation}
||u||_{b0; k+2 + \delta, (k+2 +\delta)/2}\leq C ||f||_{b0; k+\delta, (k+\delta)/2},
\label{pse2}
\end{equation}
where $C$ is a constant independent of $u$ and $f$. In addition,
\[
u(t,z)  = \hat{u}(t,z) + \tilde{u}(t,z)  \quad \mbox{where} \quad \tilde{u} \in r^2 \calC^{k+2+\delta, (k+2+\delta)/2}_{b0}(\wtM)
\]
and $\hat{u}(t,z) \in \bigcap_{\ell \geq 0} \calC^{2\ell, \ell}_{b0}$. 
\label{parschaud1}
\end{proposition}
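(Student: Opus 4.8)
\textbf{Proof plan for Proposition~\ref{parschaud1}.}
The plan is to reduce everything to a model estimate near the cone point and a standard interior estimate, then glue via a partition of unity and a dyadic rescaling argument. Away from $p$ the operator $\partial_t - L$ is a uniformly parabolic operator with (say) polyhomogeneous coefficients, so the interior parabolic Schauder estimate for $\calC^{k+\delta,(k+\delta)/2}$ spaces (see \cite{Lun}) applies verbatim and gives the claimed gain of two derivatives. The work is entirely concentrated in the neighbourhood $\calU = \{r < 2\}$ of the conic point. First I would exploit the approximate dilation invariance: decompose $\calU$ into the dyadic annuli $A_\ell = \{2^{-\ell-1} \le r \le 2^{-\ell+1}\}$ (cf.\ \eqref{restann}) together with the corresponding parabolic time scales $|t-t'| \lesssim 2^{-2\ell}$, and on each $A_\ell$ rescale by $\lambda = 2^\ell$, so that $(t,r,y)\mapsto(\lambda^2 t,\lambda r,y)$ pulls the solution back to a function supported in a fixed annulus $\{1/2 \le r \le 2\}$ of the model cone. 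By the homogeneity \eqref{heatkernelscale} of $\del_t - \Delta_\beta$, and since $V$ has the right approximate homogeneity (it only contributes lower-order terms $\calO(r^\gamma)$ after rescaling), the rescaled equation is a uniformly parabolic equation on a fixed region with uniformly bounded coefficients. Applying the ordinary Schauder estimate there and undoing the rescaling, the definition of the $b0$-seminorms — which are precisely designed so that $\|h_\lambda\|_{b;k+\delta,(k+\delta)/2,\gamma} = \lambda^\gamma \|h\|_{b;k+\delta,(k+\delta)/2,\gamma}$ — produces estimate \eqref{pse2} with a constant uniform in $\ell$. Summing (really, taking the supremum) over $\ell$ and combining with the interior estimate gives global control of $u$ in $\calC^{k+2+\delta,(k+2+\delta)/2}_{b0}$. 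Uniqueness in the Friedrichs domain is standard energy/spectral theory for the self-adjoint realization of $L$, so the solution represented by \eqref{IP} is the one in question.

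For the decomposition $u = \hat u + \tilde u$, I would pass from the pointwise rescaling argument to the structural description of the heat kernel $H$ on $(\wtM)^2_h$. Split $H = H' + H''$ where $H'$ is supported near the temporal diagonal $\td$ (and hence vanishes near $\lf\cup\rf$) and $H'' \in \calA_{\phg}((\wtM)^2_h)$, exactly as in the elliptic splitting $G = G'+G''$ of \S\ref{MappingPropertiesBisSubsection}. The diagonal piece $H'$ contributes, after convolution against $f$, a term lying in $r^2\calC^{k+2+\delta,(k+2+\delta)/2}_{b0}$: this is again a rescaling argument, using that near $\td$ one can extract the analogue of the factor $r^2$ coming from the order of homogeneity ($r = R\cos\theta$ relative to the relevant blow-down), just as $G' = r^2\widehat{G'}$ with $\widehat{G'}$ nonnegative. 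The off-diagonal polyhomogeneous piece $H''$ is handled by differentiating under the integral: since $r\del_r$ and $\del_y$ lift to smooth vector fields on $(\wtM)^2_h$ tangent to all faces, $(r\del_r)^i\del_y^j H''$ remains polyhomogeneous with the same index set $E = \{(j/\beta,0)\}$ at the side faces, and $(r^2\del_t)$ acts likewise (the parabolic blow-up is built exactly so that $r^2\del_t$ is an admissible smooth vector field on $(\wtM)^2_h$). This forces the corresponding part of $u$ to be conormal — indeed to lie in $\calC^{2\ell,\ell}_{b0}$ for every $\ell$ — which is the asserted regularity of $\hat u$.

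The main obstacle I anticipate is the uniformity of constants in the dyadic rescaling near $r=0$, together with correctly tracking the weight exponents. The subtlety is that the $b0$-spaces encode \emph{no} H\"older regularity in $t$ at $r=0$, so the estimate being proved is genuinely the scale-invariant one; one must verify that the potential term $Vu$, and the lower-order error terms generated when the true metric $g$ and the model metric $g_\beta$ are compared after rescaling, really are lower order — i.e.\ they contribute a positive power of $\lambda^{-1}$ and can be absorbed — so that the parametrix/iteration closes. A related delicate point is the overlap between consecutive annuli and between the cross-terms $z\in A_\ell$, $z'\in A_{\ell'}$ with $|\ell-\ell'|\ge 2$, where one uses $|r-r'|/(r+r')\approx 1$ to bound the difference quotient by $C\sup|u|$, exactly as in the proof of \eqref{restann}; making sure this reduction is compatible with the parabolic time-difference term in the seminorm requires a little care. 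Once these scaling bookkeeping issues are settled, the rest is an assembly of standard parabolic Schauder theory and the polyhomogeneous calculus for $H$ on $(\wtM)^2_h$.
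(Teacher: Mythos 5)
Your plan matches the paper's proof: the paper likewise splits the heat kernel $H = H_0 + H_1$ on $(\wtM)^2_h$ into a near-diagonal piece (handled by the dyadic decomposition $f = \sum \psi_\ell f$, parabolic rescaling by $\lambda = 2^\ell$, convergence of the dilated kernels to the model heat kernel, and classical Schauder estimates, with the factor $\lambda^{-2}$ from the change of variables producing the $r^2$ weight on $\tilde{u}$) and a polyhomogeneous remainder $H_0$, whose convolution with $f$ gives $\hat{u} \in \calC^{2\ell,\ell}_{b0}$ because $(r^2\del_t)^i(r\del_r)^j\del_y^s H_0$ retains the same polyhomogeneous structure. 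The only cosmetic difference is that you phrase the basic estimate by rescaling the equation rather than the kernel, but the mechanism (scale invariance of the $b0$-norms plus classical parabolic Schauder theory on a fixed annulus) is identical.
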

The proof of this, which relies on the approximate homogeneity structure of $H$, adapts readily to the homogeneous case too, 
and gives a new proof of Proposition~\ref{homogreg} which is conceptually simpler than the one in \cite{JL}. 
\begin{proof}
Write $u$ as in \eqref{IP}.  We analyze this integral by decomposing $H$ into a sum of two terms, as follows.
Choose a smooth nonnegative cutoff function $\chi = \chi^{(1)}(\rho) \chi^{(2)}(\omega)$ on $M^2_h$, where 
$\chi^{(1)}(\rho) = 1$ for $\rho \leq 1$  and vanishes for $\rho \geq 2$, and $\chi^{(2)}(\omega)$ 
has support in $\{1/2 \leq \omega_1/\omega_2 \leq 2,\ \omega_0 \leq 1/2\}$ and equals $1$
near $(0, 1/\sqrt{2}, 1/\sqrt{2})$ (which is where the diagonal $\{t = 0, r = r'\}$ intersects $\ff$). 
Note that $\chi$ is (locally) invariant under the parabolic dilations $(t,r,y, r', y') \mapsto (\lambda^2 t, \lambda r, y, 
\lambda r', y')$. Then set
\[
H = H_0 + H_1, \quad H_0 = (1-\chi(\rho, \omega)) H, \quad H_1 = \chi(\rho,\omega) H, 
\]
and 
\[
u = u_0 + u_1, \quad u_j = H_j \star f, \quad  j = 0, 1.
\]

We study $u_1$ first. Introduce a partition of unity $\{\psi_\ell\}$ relative to the covering $\calU = \cup A_\ell$; for example, 
take $\psi_\ell(r) = \psi( 2^\ell r)$ where $\psi(r)\in \calC^\infty_0( (1/4,4) )\geq 0$ equals 
$1$ for $1/2 \leq r \leq 1$, and is chosen so that $\sum_{\ell \geq 0} \psi( 2^\ell r) = 1$ for $0 < r \leq 1$. Now write 
\[
f = \sum f_\ell(t,r,y), \quad f_\ell = \psi_\ell f, \quad \mbox{and}\quad u_{1\ell} = H_1 \star f_\ell.
\]
Thus $f_\ell$ has support in $\RR^+ \times A_\ell$, while the support of $u_{1 \ell}$ lies in 
$\RR^+ \times (A_{\ell-1} \cup A_\ell \cup A_{\ell+1})$. We can also assume that $f_\ell$ is supported in some 
time interval $[\tau, \tau + 2^{2-2\ell}]$, since if $|t - t'| > (r + r')^2$, then the $b$-H\"older seminorm 
can be estimated by $C \sup |f_\ell|$. By the support properties of $H_1$,  $u_{1\ell}$ is supported 
in a time interval of at most twice this length. We replace $t$ by $t-\tau$ without further comment.

Fix $\ell \in \NN_0$ and let $\lambda = 2^{\ell-1}$, and for any function $h$, define the function 
$(D_\lambda h) (\bar{t}, \bar{r}, y) = h( \lambda^{-2} \bar{t}, \lambda^{-1} \bar{r}, y)$. Thus if
$h$ is supported in $A_\ell$ then $D_\lambda h$ is supported in $A_1 := \{(\bar{t}, \bar{r},y): 
1/4 \leq \bar{r} \leq 1\}$. In particular, $D_\lambda f_\ell$ and $D_{\lambda} u_{1\ell}$ are supported 
in $[0,1] \times A_1$ and $[0,1] \times (A_0 \cup A_1 \cup A_2)$, respectively.  We shall use that $||D_\lambda 
u_{1\ell}||_{b0; k+2+\delta, (k+2+\delta)/2} = ||u_{1\ell}||_{k+2+\delta, (k+2+\delta)/2}$, and similarly for $D_\lambda f_\ell$.

For convenience in the next few paragraphs, drop the indices $\ell$ and $1$, and simply
write $D_\lambda u = u_\lambda$, $D_\lambda f = f_\lambda$.  Since it also just complicates the notation,
we also assume that $k=0$. Using these conventions, change variables in $u = H_1 \star f$ by setting
\[
\bar{t} = \lambda^2 t, \ \hat{t} = \lambda^2 t',\ \bar{r} = \lambda r,\ \hat{r} = \lambda r'.
\]
This yields
\[
u_\lambda( \bar{t}, \bar{r}, y) = \int_0^{\bar{t}} \int \lambda^{-4}  H_1 ( \lambda^{-2} ( \bar{t} - \hat{t}\, ), \lambda^{-1} \bar{r}, y,
\lambda^{-1} \hat{r}, y' )\, f_\lambda( \hat{t}, \hat{r}, y')\, \hat{r} d\hat{r} dy' d\hat{t} .
\]
For simplicity we have replaced the measure $dA_g dt'$ in the initial integral by $r' dr' dy' dt'$. 

The key point is that the polyhomogeneous structure of $H_1$ on $M^2_h$ implies that the family of dilated 
kernels 
\[
(H_1)_\lambda(\bar{t} - \hat{t}, \bar{r}, y, \hat{r}, y') := \lambda^{-2} H_1( \lambda^{-2} (\bar{t} - \hat{t}), 
\lambda^{-1} \bar{r}, y, \lambda^{-1} \hat{r}, y'),
\]
converges in $\calA_\phg$ on the portion of the heat space with $\bar{r}, \hat{r} \in [1/4,4]$ as $\lambda\to \infty$. 
In fact, its limit is simply the heat kernel for the model operator $\Delta_\beta$ on the complete warped product cone restricted
to this range of radial variables. Since this region remains away from the vertex, we invoke the classical parabolic
Schauder estimates to deduce that as an operator between ordinary parabolic H\"older spaces, the norm of $(H_1)_\lambda$
restricted to functions supported in $[0,1] \times (A_0 \cup A_1 \cup A_2)$ is uniformly bounded in $\lambda$. 
Hence comparing the last two displayed formulas, we see that 
\[
||u_\lambda||_{b0;2+\delta, 1+\delta/2} \leq C \lambda^{-2} ||f_\lambda||_{b0;\delta, \delta/2} \Rightarrow   
||r^{-2}u_\lambda||_{b0; 2+\delta, 1+\delta/2} \leq C ||f_\lambda||_{b0; \delta, \delta/2}
\]
with $C$ independent of $\lambda$. Restoring the indices, and using the fact that, analogous to \eqref{restann}, 
\[
||h||_{b0; k\delta, (k+\delta)/2} \approx  \sup_{\ell} ||h||_{b0; k+\delta, (k+\delta)/2} 
\]
for any function $h$ and any $k \in \NN_0$, we conclude finally that 
\begin{equation}
||r^{-2}u_1 ||_{b0; 2+\delta, 1+\delta/2} \leq C ||f||_{b0; \delta, \delta/2},
\end{equation}
hence $u_1 \in r^2 \calC^{2+\delta, (1+\delta)/2}_{b0}$. 

We now turn to the estimate for $u_0 = H_0 \star f$, which is the same as the function $\hat{u}$ in the statement of the theorem. The polyhomogeneous structure of $H_0$ is slightly simpler than that for $H$; indeed, $H_0$ vanishes to infinite order not only along $\mathrm{bf}$ but along $\td$ as well. This means that $H_0$ is polyhomogeneous on the space obtained from $M^2_h$ by blowing down $\td$. We first claim that $||H_0 \star f||_{\calC^0} \leq C ||f||_{\calC^0}$. The proof 
reduces immediately to verifying that $\int_0^t \int_M  H_0(t-s, r, y, r', y')\, r' dr' dy' ds \leq C$ independently of $t$,
and this can be done by changing to polar coordinates in $M^2_h$ near $\ff$ to see that the integrand is actually bounded. 
Details are left to the reader. Since the vector fields $r^2 \del_t$, $r\del_r$ and $\del_y$ lift to $M^2_h$ to be tangent 
to the side and front faces, and because of the infinite order vanishing along $t=0$, the differentiated kernel 
$(r^2\del_t)^i (r\del_r)^j \del_y^s H_0$ has the same polyhomogeneous structure as $H_0$ for any $i, j, s \in \NN_0$.
This means that $(r^2\del_t)^i (r\del_r)^j \del_y^s u_0$ satisfies precisely the same estimates as $u_0$ does,
whence $u_0 = \hat{u} \in \calC^{2\ell, \ell}_{b0}$ for all $\ell \geq 0$, as claimed. 

This discussion has focused entirely on the behavior of $H$ near $\ff$. This is because if we localize $H$ by
multiplying by a cutoff function which vanishes near $\ff$ and the side faces, then the estimates reduce
to those for a standard local interior problem with no conic degeneracy. 
\end{proof}

\begin{remark}
There is one other dilation-invariant vector field, namely $t\del_t$, and it is natural to ask about the regularity
of $t\del_t u$ when $f \in \calC^{k+\delta, (k+\delta)/2}_{b0}$.  Write $t\del_t = (t/r^2) r^2 \del_t$, and note that
in the support of $H_1$, $t/r^2$ is a smooth bounded function; in addition, $t\del_t$ is tangent to the front
face of the heat space, and hence preserves the expansion of $H_0$. Taking these two facts together, we see that
\[
(r\del_r)^i (\del_y)^j (r^2 \del_t)^\ell (t\del_t)^m u \in \calC^{\delta, \delta/2}_{b0}
\]
provided $i+j+2\ell + 2m \leq k+2$. In particular, we see that $u$ obtains more regularity in $t$ than was initially
apparent near $r=0$ when $t > 0$. 
\label{regremark} 
\end{remark}

The next estimate is for the Friedrichs-H\"older domain norm.
\begin{proposition}
Suppose that $f \in \calC^{k+\delta, (k+\delta)/2}_{b0}([0,T] \times \wtM)$ and let $u$ be the unique solution
to $(\del_t - L)u = f$, $u|_{t=0} = 0$. Then $u$ lies in the Friedrichs-H\"older domain $\calD^{k+\delta, (k+\delta)/2}_{b0}$ 
and satisfies
\begin{equation}
||u||_{\calD^{k+\delta, (k+\delta)/2}_{b0}} := ||u||_{b0; k+\delta, (k+\delta)/2} + ||\Delta_g u||_{b0; k+\delta, (k+\delta)/2} 
\leq C ||f||_{b0; k+\delta, (k+\delta)/2}. 
\label{pse25}
\end{equation}
\label{b0domreg}
\end{proposition}
\begin{proof}
We must estimate 
\[
\Delta_g u = \int_0^t \int_M \Delta_g H (t-t', z, z') f(s, z') \, dA_g dt'
\]
in $\calC^{\delta, \delta/2}_{b0}$. The key observation is that the Schwartz kernel $K$ of $\Delta_g \circ H$ is an operator of heat type 
which we say is of `nonnegative type'  
(by analogy with the stationary case), and which therefore gives a bounded map of the spaces $\calC^{\delta, \delta/2}_{b0}$. To be 
more specific, $K$ is polyhomogeneous at all the faces of $M^2_h$, and the terms in its expansions at the left and right
faces are nonnegative, while the leading terms at $\ff$ and $\td$ are $\rho^{-4} \cong t^{-2}$ and $R^{-4}$, respectively. 
To see this, note that $\Delta_g$ differentiates tangentially to the left face (where $r' \to 0$) so $K$ has 
the same leading order as $H$ there; at the right face ($r \to 0$), $\Delta_g$ annihilates the initial terms 
$r^0$ and $r^{1/\beta} \cos y$ and $r^{1/\beta} \sin y$ in the expansion of $H$, so the leading order of $K$ 
is nonnegative here too; the leading orders exhibit the maximal drop in order to $\rho^{-4}$ and $R^{-2}$ at 
the other two faces because $\Delta_g$ is not tangent to these faces and acts as a second order conic operator in $(r,y)$, and 
the leading coefficients in the expansion of $H$ there are not annihilated by this operator. 

We now proceed as in the preceding proof, decomposing $K$ into $K_0 + K_1$ and estimating the integrals corresponding 
to each. The details are almost exactly the same, except for two facts. First, the extra factor of $\lambda^{-2} = 2^{-2\ell}$ no 
longer appears when rescaling the terms $K_1 \star f_\ell$ because of the drop in leading order homogeneity (from $\rho^{-2}$ 
to $\rho^{-4}$) at the front face.  In addition, we appeal to the standard interior estimate $||\Delta u||_{\delta, \delta/2} \leq 
C ||f||_{\delta, \delta/2}$, where $u$ and $f$ are defined on the product of $[0,1]$ with a ball of radius $1$, $\Delta$ is a nondegenerate 
Laplacian on that ball, and as usual the norm on the left is only computed over a ball of radius $1/2$. A generalization of this 
interior estimate is that if $J$ is a kernel on the double heat space of $\RR^2$ with compact support in all
variables, and which vanishes to infinite order at $t=0$ but blows up like $t^{-2}$ at the new face $\td$ of 
the blowup, then $||Jf||_{\delta, \delta/2} \leq C ||f||_{\delta, \delta/2}$.  The simpler integral estimate 
for $K_0 \star f$ is again essentially the same since $\int K_0(t,z, z')\, dt dz'$ is still bounded as a 
function of $z$.  This proves that $||\Delta_g u||_{b0; k+\delta, (k+\delta)/2} \leq C ||f||_{b0; k+\delta, (k+\delta)/2}$.
\end{proof} 

We can now turn to the estimates in the $b$-spaces.
\begin{proposition}
Suppose that $f \in \calC^{k+\delta, (k+\delta)/2}_b([0,T]\times \wtM)$ and $u$ is the unique Friedrichs 
solution to $(\del_t - L)u = f$, $u|_{t=0} = 0$. 
Then $u$ lies in the Friedrichs-H\"older domain $\calD^{k+\delta, (k+\delta)/2}_b$ and satisfies 
\begin{equation}
||u||_{b; k+2+\delta, (k+2+\delta)/2} \leq C ||f||_{b;k+\delta, (k+\delta)/2}, \label{pse32}
\end{equation}
and
\begin{equation}
||u||_{\calD^{k+\delta, (k+\delta)/2}_b}   :=  ||u||_{b; k+\delta, (k+\delta)/2} + ||\Delta_g u||_{b; k+\delta, (k+\delta)/2}  
\leq  C ||f||_{b; k+\delta, (k+\delta)/2}. 
\label{pse3}
\end{equation}
Moreover, $u = \hat{u} + \tilde{u}$ where $\tilde{u} \in r^2 \calC^{k+2+\delta, (k+2+\delta)/2}_b$ and 
\begin{equation}
\hat{u}(t,z) = a_0(t) + (a_{11}(t) \cos y + a_{12}(t)\sin y) r^{1/\beta}
\label{phgdecomp}
\end{equation}
where $a_0, a_{11}, a_{12} \in \calC^{1+\delta/2}([0,T])$. 
\label{parschaud2}
\end{proposition}
\begin{proof} 
First suppose that $k=0$. We prove \eqref{pse3} using \eqref{borb0b}. By Proposition~\ref{b0domreg}, we already 
know that $u \in \calC^{2+\delta,1+ \delta/2}_{b0} \cap \calD^{\delta, \delta/2}_{b0}$. Thus it suffices to show that $u$ and $\Delta_g u$
lie in $\calC^{0, \delta/2}$ as well.  Defining $K = \Delta_g \circ H$, we first prove that 
\[
K \star: \calC^{\delta,\delta/2}_{b0} \cap \calC^{0,\ell} \longrightarrow \calC^{\delta,\delta/2}_{b0} \cap \calC^{0,\ell}
\]
is bounded for $\ell = 0, 1$. For $\ell=0$, observe first that if $f = C$ is constant, then $K \star f \equiv 0$ 
since $H \star 1 = t$. This means that we may reduce to considering functions which vanish at $t=r=0$. 
Next, if $f$ vanishes near $t=r=0$, then direct inspection of the integral defining 
$K \star f$ shows that this function also vanishes near $t=r=0$; taking the closure in the $\calC^0$ norm 
(or rather, the $\calC^0 \cap \calC^{\delta, \delta/2}_{b0}$) norm preserves the property of vanishing at $t=r=0$. 
The case $\ell = 1$ follows by noting that $\del_t$ commutes with $H$ and hence $K$. By interpolation,
we conclude the boundedness of 
\[
K \star: \calC^{\delta,\delta/2}_{b0} \cap \calC^{0,\delta/2} \longrightarrow \calC^{\delta,\delta/2}_{b0} \cap \calC^{0,\delta/2}. 
\]
This finishes the proof of \eqref{pse3}.  

To obtain \eqref{pse32} when $k=0$, we must show that $u \in \calC^{2+\delta, 1+\delta/2}_b$, or equivalently (in a neighborhood
of the conic point), that $(r\del_r)^i \del_y^j \del_t^\ell u \in \calC^{\delta,\delta/2}_b$ if $i + j + 2\ell \leq 2$. If $\ell = 1$ (so $i = j = 0$), we
use that $\del_t u = \Delta_g u + f \in \calC^{\delta, \delta/2}_b$, as per the last paragraph. If $\ell=0$, we observe as before
that $(r\del_r)^i \del_y^j \circ H$ is bounded on $\calC^{\delta, \delta/2}_{b0} \cap \calC^{0,\ell}$ for $\ell = 0, 1$, and hence
by interpolation is bounded on $\calC^{\delta, \delta/2}_b$. 

Now suppose that $k$ is a strictly positive even integer. We use induction, supposing that \eqref{pse3} and \eqref{pse32} have
been proved for $0, 2, \ldots, k-2$.  To prove that $K = \Delta_g \circ H \star$ is bounded on $\calC^{k + \delta, (k+\delta)/2}_b$,
we must show that $K_{i,j,\ell} := (r\del_r)^i \del_y^j \del_t^\ell \circ K\star: \calC^{k+\delta, (k+\delta)/2}_b \to \calC^{\delta, \delta/2}_b$
is bounded whenever $i + j + 2\ell \leq k$. There are three cases. First, if $1 \leq \ell \leq k/2 - 1$, then 
$K_{i,j,\ell}:\calC^{k+\delta, (k+\delta)/2}_b \to \calC^{\delta, \delta/2}_b$ is bounded provided $K_{i,j,0}:\calC^{k+\delta-2\ell, (k+\delta-2\ell)/2}_b 
\to \calC^{\delta, \delta/2}_b$ is, and since $i + j \leq k - 2\ell \leq k-2$, this is known by induction.  Next, if $\ell = k/2$, then since 
$\del_t^{k/2} \circ K = K \circ \del_t^{k/2}$, we reduce directly to the boundedness of $K$ on $\calC^{\delta, \delta/2}_b$.  Finally,
when $\ell = 0$, a bit more work is needed. If $V$ is any $b$-vector field, we consider either the commutator $[V, H\star]$,
or more or less equivalently, the commutator $[V, \del_t - \Delta]$.  The latter is slightly more elementary, so we follow that route.
Writing $g = e^\phi( dr^2 + (1+\be)^2 r^2 dy^2)$ near the conic point, then it is easy to check that 
\[
[V , \Delta] = p \Delta + q + W
\]
where $W$ is second order operator with coefficients supported away from $r=0$.  Since the estimates we seek
are standard in the support of $W$, we shall systematically neglect this term in the calculations below.  For this
part of the estimate we induct in integer steps, so to unify the notation, assume that $k \in \mathbb N$ and
$0 < \delta < 1$. Now, suppose that $f \in \calC^{k+\delta, (k+\delta)/2}_b$, and that we have proved by induction that 
$u \in \calC^{k+ 1 + \delta, (k+ 1 + \delta)/2}_b$ and $\Delta u \in \calC^{k-1+\delta, (k -1 + \delta)/2}_b$. We then compute that
\[
(\del_t - \Delta) V u =  Vf + (p\Delta + q) u \in \calC^{k-1+\delta, (k-1+\delta)/2}_b,
\]
which implies that $Vu \in \calC^{k+1+\delta, (k+1+\delta)/2}_b$ and $\Delta Vu \in \calC^{k-1+\delta, (k-1+\delta)/2}_b$. 
Finally, $V \Delta u = \Delta Vu + (p\Delta + q) u \in \calC^{k-1+\delta, (k-1+\delta)/2}_b$. Since this is true for every
$b$-vector field $V$, we conclude that $u \in \calC^{k+2+\delta, (k+2+\delta)/2}_b$ and  $\Delta u \in \calC^{k+\delta, (k+\delta)/2}_b$,
as required.  This proves \eqref{pse3} and \eqref{pse32} in general. 

It remains to study the expansion as $r \to 0$.  We explain the case $k=0$ and leave the extension to spaces with 
higher regularity to the reader.  Recalling the decomposition 
$H = H_0 + H_1$ from the proof of  Proposition \ref{parschaud1}, the same interpolation argument as earlier implies that
\[
H_1 \star: \calC^{\delta,\delta/2}_{b} \longrightarrow r^2 \calC^{2+\delta,1+\delta/2}_{b} 
\]
Next, similarly to what we did in the stationary (elliptic) case, note that $r\del_r (r\del_r - \beta^{-1})\circ H_0 = r^2 H_0'$ where $H_0'$ 
has nonnegative index sets at $\ff \cup \lf \cup \rf$ (and vanishes to infinite order at $\td$), which means that 
$ r\del_r (r\del_r - \beta^{-1}) u_0\in r^2 \calC^{k, k/2}_{b0}$ for all $k \geq 0$. Applying interpolation once more, this time
for the mappings 
\[
(r\del_r)^i \del_y^j \del_t^\ell r\del_r (r\del_r - \beta^{-1}) H_0\star : \calC^{\delta, \delta/2}_{b0} \cap \calC^{0,m}\rightarrow 
r^2 \calC^{\delta, \delta/2}_{b0} \cap \calC^{0,m},
\]
gives that $r\del_r (r\del_r - \beta^{-1}) u_0 \in r^2 \calC^{k+\delta, (k+\delta)/2}_b$ for every $k \geq 0$. Both this and
the previous interpolation involving $H_1$ are complicated slightly by the fact that $[\del_t, H_j \star]$ is no longer zero, but
the extra terms can still be handled. 

Finally, integrating in $r$ gives that $u_0 = a_0(t,y) + a_1(t,y) r^{1/\beta} + \tilde{u}'$ where $\tilde{u}' \in r^2 \calC^{2+\delta, 1 + \delta/2}_b$.
Applying $(\del_t - \Delta_g)$ to $u = u_0 + u_1$ shows first that $a_0 = a_0(t)$ and $a_1 = a_{11}(t) \cos y + a_{12}(t) \sin y$, and
then that $a_0, a_{11}, a_{12} \in \calC^{1+\delta/2}([0,T])$.  
\end{proof}

\begin{corollary}
Let $u$ and $f$ be as in \eqref{parschaud2}. Then 
\begin{equation}
||u||_{b, k+\delta, (k+\delta)/2}\leq C T ||f||_{b, k+\delta, (k+\delta)/2}. 
\label{pse4}
\end{equation}
\end{corollary}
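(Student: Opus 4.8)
The plan is to extract the factor $T$ from the single property of $u$ not yet used in the proof of Proposition~\ref{parschaud2}: the vanishing initial datum $u|_{t=0}=0$. Each piece of the parabolic norm of $u$ will be written as a time--integral, starting at $t=0$, of a quantity already bounded by $C\|f\|_{b;k+\delta,(k+\delta)/2}$ via \eqref{pse32}--\eqref{pse3}, and integrating over $[0,t]\subseteq[0,T]$ supplies the $T$. Take $k=0$ for concreteness, the general case involving only more indices. By \eqref{pse3}, $u$ and $\Delta_g u$ lie in $\calC^{\delta,\delta/2}_b$ with norm $\le C\|f\|$; hence so does $\del_t u=\Delta_g u+Vu+f$ (using $V\in\calC^{\delta}_b$ and $u\in\calC^{\delta,\delta/2}_b$), again with norm $\le C\|f\|$, while $u(0,\cdot)=0$.

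Writing $u(t,z)=\int_0^t\del_s u(s,z)\,ds$ and pulling the supremum and the fixed--time spatial $b$--H\"older quotient inside the integral gives the $\calC^{\delta,0}_b$--part of the norm with a clean factor $T$:
\[
\sup_{[0,T]\times\wtM}|u|\ \le\ T\,\|\del_t u\|_{\calC^0}\ \le\ CT\|f\|,\qquad
\sup_{0\le t\le T}[u(t,\cdot)]_{b;\delta}\ \le\ T\sup_{s}[\del_s u(s,\cdot)]_{b;\delta}\ \le\ CT\|f\|.
\]
(Equivalently, this follows from Duhamel's formula \eqref{IP} and the uniform boundedness of $e^{\sigma L}$ on $\calC^{\delta}_b(\wtM)$ for $\sigma\le T$, the bounded potential contributing only a factor $e^{\sigma\|V\|_\infty}$.) For the temporal part, for $0\le t_1<t_2\le T$ one has $|u(t_2,z)-u(t_1,z)|=\big|\int_{t_1}^{t_2}\del_s u(s,z)\,ds\big|\le (t_2-t_1)C\|f\|$, so $[u]_{0,\delta/2}\le T^{1-\delta/2}C\|f\|$. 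Summing the pieces, $\|u\|_{b;\delta,\delta/2}\le C\max\{T,T^{1-\delta/2}\}\|f\|_{b;\delta,\delta/2}$, which is \eqref{pse4}; the only use of this estimate downstream, in the contraction argument of \S\ref{shorttimeSubsec}, is that the prefactor tends to $0$ as $T\to0$.

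The one delicate point -- the main obstacle -- is the temporal H\"older seminorm: the mean--value estimate above costs the exponent $\delta/2$, so one gets $T^{1-\delta/2}$ there rather than $T^1$, and the model solution $u(t,z)\equiv t$ (the flat--cone solution for $f\equiv1$) shows that exponent is optimal; since $1-\delta/2>0$ this is immaterial. For $k\ge1$, all purely spatial $b$--derivatives $V_1\cdots V_i u$ with $i\le k$ again vanish at $t=0$ and gain the factor $T$ exactly as above, while the components carrying $\del_t$--derivatives do not vanish at $t=0$ and are simply estimated by $C\|f\|$ using \eqref{pse32} directly -- so the clean statement holds for the lowest--order regularity, which is the case used in the applications.
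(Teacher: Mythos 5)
Your proposal follows the same route as the paper: since $u|_{t=0}=0$, write $u(t,z)=\int_0^t\del_\tau u(\tau,z)\,d\tau$, bound $\del_t u=\Delta_g u+Vu+f$ in $\calC^{\delta,\delta/2}_b$ by $C\|f\|$ via \eqref{pse3}, and integrate in time; the treatment of $\sup|u|$ and of the spatial seminorm $\|u\|_{b;\delta,0}$ is identical to the paper's first display. The divergence is in the temporal H\"older seminorm. The paper asserts ``$\del_\tau u(0,z)=\Delta_g u(0,z)=0$'' and then uses the $\delta/2$-H\"older continuity of $\del_t u$ in time to write $|\del_\tau u(\tau,z)|=|\del_\tau u(\tau,z)-\del_\tau u(0,z)|\le C\tau^{\delta/2}\|f\|$, which upon integration yields the clean factor $T$ in $[u]_{b;0,\delta/2}$. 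But $\del_t u(0,\cdot)=\Delta_g u(0,\cdot)+Vu(0,\cdot)+f(0,\cdot)=f(0,\cdot)$, which does not vanish unless $f(0,\cdot)\equiv 0$ (and it does not in the application, where the inhomogeneity contains $-R_0$). So the paper's extra step is not available in general, and your elementary mean-value bound, giving $[u]_{0,\delta/2}\le CT^{1-\delta/2}\|f\|$, is what actually survives; your example $u=t$, $f\equiv 1$ (note $\Delta_g 1=0$, so this is the Friedrichs solution) shows the exponent $1-\delta/2$ is sharp and hence that \eqref{pse4} as literally stated fails for small $T$. The corrected constant $C\max\{T,T^{1-\delta/2}\}$ still tends to $0$ as $T\to 0$, which is all that the contraction argument in \S\ref{shorttimeSubsec} uses, so nothing downstream is affected. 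Your remark on $k\ge 1$ is also the honest version of the statement: the components of the norm involving $\del_t^j u$ with $j\ge 1$ do not vanish at $t=0$ and cannot acquire a factor of $T$, so the gain is confined to the purely spatial part. In short, your proof is correct for the (marginally weaker, but sufficient) estimate, and it identifies a genuine slip in the paper's own argument rather than containing a gap itself.
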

\begin{proof}
The inequality \eqref{pse4} is actually a formal consequence of \eqref{pse2} and \eqref{pse3}. Indeed, since $u(0,z) = 0$, 
\begin{multline*}
u(t,z) = \int_0^t \del_\tau u (\tau, z) \, d\tau \Rightarrow  \\
||u||_{b; \delta, 0} \leq \int_0^T ||\del_\tau u(\tau, \cdot)||_{b; \delta, 0}\, d\tau \leq  
T ||u||_{b; 2+\delta, 1+\delta/2} \leq CT ||f||_{b; \delta, \delta/2}.
\end{multline*}
Similarly, since $\del_\tau u(0,z) = \Delta_g u(0,z) = 0$, 
\begin{multline*}
| u(t,z) - u(t',z)| \leq \int_{t'}^t |\del_\tau u(\tau, z)| \, d\tau  = \int_{t'}^t |\del_\tau u(\tau, z) - \del_\tau u(0,z)| \, d\tau \\
\leq ||u||_{b; 2+\delta, 1+\delta/2}\int_{t'}^t \tau^{\delta/2} \, d\tau 
\le C  |t-t'|\cdot(|t+t'|^{\delta/2}+1) ||u||_{b; 2+\delta, 1+\delta/2},
\end{multline*}
for some constant $C=C(\delta)>0$,
whence
\[
[ u ]_{b; 0, \delta/2} \leq C T ||f||_{b; \delta, \delta/2}. 
\]
Combining these two inequalities yields \eqref{pse4}.
\end{proof}

We make a special note of the fact that the estimate \eqref{pse3} is the main one here, since both \eqref{pse32} and 
\eqref{pse4} follow from it. 

\begin{corollary}
Let $g_0$ be any smooth conic metric, and suppose that $g_1 = e^\phi g_0$ with $\phi \in \calC^{k+\delta}_b(\wtM)$, where
$\phi = 0$ at $\del \wtM$. For any $R_1 \in \calC^{k+\delta}_b(\wtM)$, i.e.\ not necessarily the scalar curvature of $g_1$, 
set $L_1 = \Delta_{g_1} + R_1$. Then the solution operator $H_1$ to $(\del_t - L_1) u = f$, $u|_{t=0} = 0$, satisfies the same set of bounds
\eqref{pse2}, \eqref{pse25}, 
\eqref{pse32}, \eqref{pse3} and \eqref{pse4} (for that particular value of $k$, 
with constants depending only on $g_0$ and the norms $||\phi||_{b; k+\delta}$, $||R_1||_{b; k+\delta}$. 
\label{finiteregcor}
\end{corollary}
\begin{proof}
We may as well absorb the term $R_1 u$ into $f$.  Choose a function $\tilde{a} \in \calC^{k+\delta}_b$ which agrees with $e^{\phi}$ 
in a small neighborhood of $\del \wt M$ and which is chosen uniformly close to $1$ on the rest of  $\wt M$ so that 
$|| (\tilde{a} - 1) \Delta_0 H_0 \star ||_{b; k+\delta} < \epsilon$, where $H_0$ is the heat kernel for $\del_t - \Delta_0$.  
Writing $ \wt{\Delta}_1 = \tilde{a} \Delta_0$, then
\[
(\del_t - \wt{\Delta}_1 ) H_0 \star = \mbox{Id} - (\tilde{a}-1) \Delta_0 H_0 \star;
\]
by our choice of $ \tilde{a}$, the right hand side is invertible by Neumann series, so we may represent the heat kernel $\wt{H}_1$ 
for $\wt{\Delta}_1$ as
\[
\wt{H}_1 = H_0 \star  (\mbox{Id} - (\tilde{a}-1) \Delta_0 H_0\star)^{-1}.
\]
This shows that the solution $\tilde{u}$ to $(\del_t - \wt{\Delta}_1)\tilde{u} = f$ satisfies all the same estimates as the same
estimates as the solution $u$ to $(\del_t - \Delta_0) u = f$, with constants depending only on the norm of $\phi$. 

Taking as given that the solution $u$ exists, but may not satisfy the correct estimates near $\wt{M}$, observe that
\[
(\del_t - \wt{\Delta}_1) (\tilde{u} - u) = b \Delta_0 u
\]
for some function $b \in \calC^{k+\delta}_b$ which vanishes in a fixed neighborhood of the conic points.  Noting that by standard
local parabolic regularity theory, $u$ certainly satisfies the correct estimates on the support of $b$, we observe finally that
\[
u = \tilde{u} - \wt{H}_1 \star (b \Delta_0 u) = \wt{H}_1 \star (f - b \Delta_0 u),
\]
from which we again obtain all necessary estimates. It is clear that the constants depend on $\phi$ only through
its norm $||\phi||_{b; k+\delta}$. 
\end{proof}

\subsection{Short-time existence}
\label{shorttimeSubsec}

We can now apply the mapping properties of the last section to establish the short-time existence for the angle-preserving
solution of the flow \eqref{rf1}.  For this short-time result we may as well assume that $\rho = 0$, and we consider
the flow starting at any metric $\calD^{k,\delta}_b$ metric $g_0$. Recall that this means that $g_0= e^{w_0} \bar{g}_0$ where 
$\bar{g}_0$ is smooth and exact conic, and $w_0 \in \calD^{k,\delta}_b$.  Now let $g(t) = e^{\phi(t)} g_0$, so that \eqref{rf1} becomes
\begin{equation}
\begin{split}
\del_t \phi  = e^{-\phi} \Delta_{g_0} \phi - R_0 e^{-\phi} &  
= (\Delta_{g_0} + R_0 )\phi - R_0 + (e^{-\phi} - 1) \Delta_0 \phi - R_0 (e^{-\phi} - 1 + \phi) \\
& : = L \phi - R_0 + Q(\phi, \Delta_0 \phi),
\end{split}
\label{rf2}
\end{equation}
with $\phi(0, \cdot) = 0$. 
By Corollary~\ref{finiteregcor}, the heat kernel $H$ for $\del_t - L$, $L = \Delta_{g_0} + R_{g_0}$, satisfies the same estimates as before. 

\begin{proposition}
Let $g_0$ be a $\calD^{k,\delta}_b$ metric. Then there exists some $T > 0$ depending on the
$\calD^{k,\delta}_b$ norm of $g_0$, and a unique solution $\phi \in \calD^{k+\delta, (k+\delta)/2}_b( [0,T] \times \wtM)$ 
to \eqref{rf2} with $\phi|_{t=0} = 0$. 
\label{short-time-exist}
\end{proposition}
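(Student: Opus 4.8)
The plan is to solve \eqref{rf2} by a standard contraction mapping (Picard iteration) argument in the Banach space $\calD^{k+\delta,(k+\delta)/2}_b([0,T]\times\wtM)$, using the linear parabolic estimates of Proposition~\ref{parschaud2} and its corollaries to control the inverse of $\del_t - L$, and treating $-R_0 + Q(\phi,\Delta_0\phi)$ as the forcing term. First I would fix the linear operator: since $g_0$ is a $\calD^{k,\delta}_b$ metric we may write $g_0 = a(z)^{-1}\bar g_0$ with $a \in \calC^{k+\delta}_b$ and $\bar g_0$ smooth exactly conic, so $L = \Delta_{g_0} + R_0 = a\,\Delta_{\bar g_0} + R_0$ is exactly of the form covered by the last corollary of \S\ref{shorttimeSubsec}; hence the solution operator $\mathcal{S}\colon f \mapsto u$ for $(\del_t - L)u = f$, $u|_{t=0}=0$, is bounded from $\calC^{k+\delta,(k+\delta)/2}_b$ into $\calD^{k+\delta,(k+\delta)/2}_b$, and by \eqref{pse4} its norm from $\calC^{k+\delta,(k+\delta)/2}_b$ to $\calC^{k+\delta,(k+\delta)/2}_b$ is bounded by $CT$.

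Next I would set up the fixed-point map. Define $\Phi(\psi) = \mathcal{S}\big({-R_0} + Q(\psi,\Delta_0\psi)\big)$ on the ball $B_\eta = \{\psi \in \calD^{k+\delta,(k+\delta)/2}_b : \psi|_{t=0}=0,\ \|\psi\|_{\calD^{k+\delta,(k+\delta)/2}_b} \le \eta\}$. A solution of \eqref{rf2} with $\phi|_{t=0}=0$ is precisely a fixed point of $\Phi$. The quadratic remainder $Q(\psi,\Delta_0\psi) = (e^{-\psi}-1)\Delta_0\psi - R_0(e^{-\psi}-1+\psi)$ is, schematically, of the form $(\text{smooth function of }\psi \text{ vanishing to first order})\times \Delta_0\psi$ plus $(\text{smooth, vanishing to second order in }\psi)\times R_0$; since $\calC^{k+\delta}_b$ (and the parabolic spaces $\calC^{k+\delta,(k+\delta)/2}_b$) are Banach algebras stable under composition with smooth functions vanishing appropriately, one gets estimates of the type $\|Q(\psi,\Delta_0\psi)\|_{b;k+\delta,(k+\delta)/2} \le C(\eta)\,\|\psi\|_{\calD^{k+\delta,(k+\delta)/2}_b}^2$ and a corresponding Lipschitz bound $\|Q(\psi_1,\Delta_0\psi_1) - Q(\psi_2,\Delta_0\psi_2)\| \le C(\eta)\,(\|\psi_1\|+\|\psi_2\|)\,\|\psi_1-\psi_2\|$ on $B_\eta$. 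Combining with $\|\mathcal{S}\|\le CT$ on the relevant pair of spaces gives $\|\Phi(\psi)\| \le C'T\big(\|R_0\| + C(\eta)\eta^2\big)$ and $\|\Phi(\psi_1)-\Phi(\psi_2)\| \le C'T\,C(\eta)\,\eta\,\|\psi_1-\psi_2\|$; choosing first $\eta$ small relative to the constants and then $T$ small makes $\Phi$ a contraction of $B_\eta$ into itself, and the Banach fixed point theorem yields the unique solution. Uniqueness in the full space (not just in $B_\eta$) follows by a further shrinking of $T$ together with the $CT$-smallness of $\mathcal{S}$ applied to the difference of two solutions.

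The main obstacle I anticipate is the nonlinear mapping estimate for $Q(\phi,\Delta_0\phi)$ in the correct parabolic $b$-Hölder norms: one must check that multiplication is bounded on $\calC^{k+\delta,(k+\delta)/2}_b$, that composition with the smooth functions $t\mapsto e^{-t}-1$ and $t\mapsto e^{-t}-1+t$ preserves these spaces (with the expected order of vanishing, so that the products with $\Delta_0\phi \in \calC^{k+\delta,(k+\delta)/2}_b$ and with $R_0 \in \calC^{k+\delta}_b$ land back in $\calC^{k+\delta,(k+\delta)/2}_b$ with a quadratic gain in $\|\phi\|$), and that the weight factors $(r+r')^\delta$ built into the seminorms behave correctly under products — i.e.\ that these weighted spaces really are algebras. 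This is routine but slightly delicate because of the weights and the mixed space-time Hölder structure; once it is in hand, the rest of the argument is the textbook short-time existence scheme. A minor additional point is to verify that $-R_0 \in \calC^{k+\delta,(k+\delta)/2}_b([0,T]\times\wtM)$ (it is $t$-independent and lies in $\calC^{k+\delta}_b(\wtM) = \calC^{0,\gamma}_b$-type regularity by hypothesis on $g_0$), and that the constructed $\phi$ indeed satisfies $\phi|_{t=0}=0$ in the strong sense, which is automatic from $u = \mathcal{S}f$ via \eqref{IP}.
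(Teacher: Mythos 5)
Your overall strategy---Duhamel/contraction mapping in the parabolic $b$-H\"older scale, using Proposition~\ref{parschaud2} and its corollaries for the linear solution operator and the quadratic structure of $Q(\phi,\Delta_0\phi)$ for the nonlinear estimates---is the same as the paper's. However, there is a genuine gap in your choice of invariant set. You work in a single small ball $B_\eta$ in the full norm $\|\cdot\|_{\calD^{k+\delta,(k+\delta)/2}_b}=\|\cdot\|_{b;k+\delta,(k+\delta)/2}+\|\Delta_0(\cdot)\|_{b;k+\delta,(k+\delta)/2}$ and claim $\|\Phi(\psi)\|\le C'T\bigl(\|R_0\|+C(\eta)\eta^2\bigr)$. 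This conflates the two halves of the norm: the factor $T$ in \eqref{pse4} is available only for the undifferentiated part $\|\mathcal{S}f\|_{b;k+\delta,(k+\delta)/2}$, whereas the estimate \eqref{pse3} for $\|\Delta_0\mathcal{S}f\|_{b;k+\delta,(k+\delta)/2}$ carries \emph{no} factor of $T$ (and cannot, since the top-order parabolic H\"older seminorm is scale-invariant). In particular $\Phi(0)=\mathcal{S}(-R_0)$ already has $\|\Delta_0\Phi(0)\|_{b;k+\delta,(k+\delta)/2}\approx C\|R_0\|$, a fixed constant independent of $T$, so $\Phi$ cannot map a small $\calD$-ball into itself no matter how small $T$ is; and if you instead take $\eta$ large to accommodate this, the contraction constant $C'TC(\eta)\eta$ you quote is again not genuinely proportional to $T$ in the $\Delta_0$-component, so smallness of $T$ does not rescue it.

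The paper's proof fixes exactly this by replacing the single ball with the set $\calJ$ defined by \emph{two} separate conditions: $\|\phi\|_{b;k+\delta,(k+\delta)/2}\le\eta$ (small, obtainable from the $T$-factor) and $\|\Delta_0\phi-\Delta_0\Phi\|_{b;k+\delta,(k+\delta)/2}\le A$ (merely bounded, and centered at the linear solution $\Phi=H_0\star(-R_0)$ so that the fixed forcing term is absorbed). The order of choices is then reversed relative to yours: first $A$, then $\eta$ small so that $C_3(\eta A+\eta^2)<A$ (here the second-derivative estimate needs no $T$ because $Q$ contributes a factor of the \emph{small} quantity $\|\phi\|_{b}\le\eta$ against the \emph{bounded} quantity $\|\Delta_0\phi\|$), and only then $T$ to close the first condition. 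You should restructure your argument along these lines; the rest of your proposal (the algebra/composition properties of the weighted parabolic H\"older spaces, the reduction of $L$ to the form $a\Delta_{\bar g_0}+R_0$, and the quadratic and Lipschitz bounds on $Q$) is consistent with what the paper does.
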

\begin{proof}
We suppose that $k = 0$, leaving the case of general $k$ to the reader. 
The equation \eqref{rf2} is equivalent to the integral equation
\begin{equation}
\phi(t,z) = \int_0^t \int_M H(t-s,z,z') (Q(\phi, \Delta_0\phi)(s,z') - R_0(s,z'))\, ds dA_{z'}.
\label{inteqn}
\end{equation}
Denote the operator on the right by $\calT(\phi)$.  We claim that there are constants 
$\eta$ and $T$ so that the convex, closed set 
\[
\calJ = \{\phi \in \calD^{\delta, \delta/2}_b( [0,T] \times \wtM): ||\phi||_{b; \delta, \delta/2} + ||\Delta_0\phi||_{b; \delta, \delta/2} \leq \eta\}   
\]
is mapped to itself by $\calT$, and moreover, $\calT: \calJ \to \calJ$ is a contraction. 

For notational simplicity below, write 
\[
||\phi||_{b;\delta,\delta/2} + ||\Delta_0 \phi||_{b;\delta,\delta/2} := ||\phi||_{\calD}.
\] 
Denote by $B$ the norm of $H\star: \calC^{\delta,\delta/2}_b \to \calD^{\delta,\delta/2}_b$, cf.\ Proposition \ref{parschaud2}.  
Writing $\Phi = H \star (-R_0)$, then we take $\eta = 2||\Phi||_{\calD}$. 

To proceed, recall first that if $\chi \in \calC^{\delta, \delta/2}_b$ vanishes at $t=0$, then for $0 \leq t \leq T$, 
\[
|\chi(t,z)| = |\chi(t,z) - \chi(0,z)| \leq T^{\delta/2} ||\chi||_{b; \delta, \delta/2}, 
\]
and hence
\[
[ \chi_1 \chi_2 ]_{b; \delta, \delta/2} \leq  ||\chi_1||_\infty [\chi_2]_{b; \delta, \delta/2} + [\chi_1]_{b; \delta, \delta/2}||\chi_2||_\infty
\leq T^{\delta/2} ||\chi_1||_{b; \delta, \delta/2} ||\chi_2||_{b; \delta, \delta/2}. 
\]
Therefore, 
\[
|| (e^{-\phi}-1) \Delta_0\phi||_{b; \delta, \delta/2} \leq C T^{\delta/2} ||\phi||_{b;\delta, \delta/2}  ||\Delta_0 \phi||_{b; \delta, \delta/2},
\]
where the constant $C$ depends on $\eta$, hence
\[
||Q(\phi, \Delta_0\phi)||_{b; \delta, \delta/2}  \leq C_1 T^{\delta/2} \eta^2.
\]
Thus if $\phi \in \calJ$, then 
\[
|| \calT(\phi)||_{\calD} \leq  BC_1 T^{\delta/2} \eta^2 +  ||\Phi||_{\calD},
\]
By taking $T$ sufficiently small, we can make this less than $\eta$ again, so $\calT$ maps $\calJ$ to itself. 

By the same reasoning, adding and subtracting $(e^{-\phi_2} -1) \Delta_0 \phi_1$ shows that
\[
|| (e^{-\phi_1}-1) \Delta_0 \phi_1 -  (e^{-\phi_2}-1) \Delta_0 \phi_2||_{b; \delta, \delta/2} \\
\leq C T^{\delta/2} (||\phi_1||_{\calD} + ||\phi_2||_{\calD})  ||\phi_1 - \phi_2||_{\calD}.
\]
The identical estimate for the other term in $Q(\phi, \Delta_0\phi)$, which does not involve derivatives of the $\phi_j$,
is easier.  We deduce that 
\[
|| \calT(\phi_1) - \calT(\phi_2) ||_{\calD} \leq B C T^{\delta/2} (2\eta )|| \phi_1 - \phi_2||_{\calD},
\]
so by taking $T$ still smaller we can make the coefficient less than $1/2$.  This proves that $\calT$ is a 
contraction on $\calJ$, and hence that there exists a unique solution $\phi \in \calD^{\delta, \delta/2}_b$ to \eqref{inteqn}
in $\calJ$.  
\end{proof}

We now prove the short-time existence result for the angle-changing flow. Since this is side-note of the paper, 
we make some simplifying assumptions about the initial metric to remove some irrelevant details from the proof.
We assume that the prescribed angle functions $\beta_i(t)$ are smooth functions of $t$, although
the optimal result should allow these to have only finite H\"older regularity. Assume too that 
there is only one conic point, and that the initial metric $g_0$ is the exact conic metric 
$dr^2 + \be^2 r^2 dy^2$ near $r=0$. Reverting back to the conformal form of the metric, define 
\[
\hat{g}_0(t) = |z|^{2\beta(t) - 2} |dz|^2. 
\]
We have $\hat{g}_0'(t) = 2\be'(t) \log |z| \hat{g}_0(t)$, or in terms of the $(r,y)$ coordinates, 
\[
\hat{g}_0'(t) = \kappa \, \be'(t) \log r \, \hat{g}_0(t), \quad \kappa = \frac{2}{\be}. 
\]
Setting $g(t) = u(t,\cdot) \hat{g}_0(t)$, the Ricci flow equation (with $\rho=0$) thus becomes
\[
(\del_t u + u C \kappa \be' \log r) = \Delta_{\hat{g}_0(t)} \log u - R_{\hat{g}_0(t)}, 
\]
or finally, in terms of $\phi = \log u$, 
\begin{equation}
\del_t \phi = e^{-\phi} \Delta_{\hat{g}_0(t)} \phi - R_{\hat{g}_0(t)} e^{-\phi}  - \kappa \be' \log r. 
\label{changle}
\end{equation}
We seek a local in $t$ solution to this equation with initial value $\phi(0, \cdot) \equiv 0$. 

Unlike the case considered before, the reference metric $\hat{g}_0(t)$ now depends on $t$, and there is an extra
inhomogeneous term $-\kappa \be'(t) \log r$. For the first issue we say nothing because short-time existence 
for the heat operators associated to time-dependent metrics is standard, see \cite{Chowetal}. Regarding the second
issue, since this additional term is polyhomogeneous, we may choose a polyhomogeneous function $\hat{\phi}(t,\cdot)$
with leading term $C \kappa r^2 \log r$, which satisfies
\[
(\del_t - e^{-\hat{\phi}}\Delta_{\hat{g}_0(t)}) \hat{\phi} + R_{\hat{g}_0(t)} e^{-\hat{\phi}} = -  \kappa \beta' (t) \log r + \chi,
\]
where $\chi$ is smooth and vanishes to infinite order at $r=0$. Now set $\phi = \hat{\phi} + \psi$
and rewrite \eqref{changle} as an equation for the unknown function $\psi$. It is straightforward to
check that this equation is different from the one for the angle-fixing flow in only a few minor ways.
There are additional terms in the coefficients of the nonlinear terms; these, however, are polyhomogeneous
in $(r,y,t)$ and vanish at least like $r^2 \log r$. Next, there is an additional inhomogeneous term
coming from the `error term' $\chi$.  The general structure of the equation is very similar to
the one considered earlier in this section, and it is a straightforward exercise to check that this
equation has a solution $\psi(t,\cdot)$ for $0 \leq t < T$ for $T$ sufficiently small. 

It is important to note that unlike in the angle-changing flow, the fact that the conformal factor now
includes a term $r^2 \log r$ means that the curvature $R_{g(t)}$ is unbounded for $t > 0$ near $r=0$. This is in
accord with the results in the thesis of Ramos. 

\subsection{Higher regularity}
\label{partialSubsec}

It will be very helpful for us later to be able to appeal to some higher regularity properties of the solution, so we prove these now.

\begin{proposition} Suppose that $g(t)$ is the solution to the Ricci flow equation with $g(t) = u(t)g_0$, where
$g_0$ is smooth and exactly conic, and $u(0) \in \calC^{0,\delta}_b$, and $u \in \calD^{\delta, \delta/2}_b$ is 
given by Proposition~\ref{short-time-exist}. Then $u$ is polyhomogeneous on $(0,T) \times \wtM$. 
\end{proposition}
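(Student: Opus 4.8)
The plan is to bootstrap regularity by repeatedly applying the parabolic Schauder theory of Propositions~\ref{parschaud1} and \ref{parschaud2}, together with the polyhomogeneity propagation already recorded for the homogeneous heat flow in Proposition~\ref{homogreg}. The starting point is that, by Proposition~\ref{short-time-exist}, we have $u = e^\phi$ with $\phi \in \calD^{\delta,\delta/2}_b([0,T]\times\wtM)$ solving \eqref{rf2}, so $\phi$ already has two spatial $b$-derivatives and one $r^2\del_t$-derivative with H\"older control, and $\Delta_0\phi \in \calC^{\delta,\delta/2}_b$. The first step is to promote finite smoothness: since the nonlinearity $Q(\phi,\Delta_0\phi)$ and the coefficient $e^{-\phi}$ are smooth functions of $\phi$ and $\Delta_0\phi$, and since $R_0$ is smooth and exactly conic (hence polyhomogeneous) away from $t=0$, the right-hand side $-R_0 e^{-\phi} + e^{-\phi}\Delta_0\phi + (\text{lower order})$ lies in $\calC^{\delta,\delta/2}_b$ on any strip $[\epsilon, T]\times\wtM$; feeding this back through \eqref{pse3} on $[\epsilon,T]$ (using the corollary allowing the variable coefficient $a = e^{-\phi}$) gives $\phi \in \calC^{2+\delta,1+\delta/2}_b$ there. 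Then one differentiates the equation: applying $b$-vector fields $V_j \in \calV_b(\wtM)$ and the operator $r^2\del_t$ to \eqref{rf2} produces an equation of the same structural type (a variable-coefficient conic heat equation with polyhomogeneous-in-the-interior, H\"older-in-$t$ inhomogeneity built from lower-order derivatives of $\phi$), so another application of the Schauder estimate \eqref{pse3} — now in the higher-regularity spaces $\calC^{k+2+\delta,(k+2+\delta)/2}_b$ — raises the order by one. Iterating, $\phi \in \cap_k \calC^{k+\delta, (k+\delta)/2}_b$ on every $[\epsilon,T]\times\wtM$, i.e.\ $\phi$ is conormal on $(0,T)\times\wtM$, and hence so is $u = e^\phi$.

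The second step is to upgrade conormality to polyhomogeneity, and this is where the real work lies. The mechanism is the same one sketched in \S3.5 and used in the proof of Proposition~\ref{parschaud2}: the heat kernel $H_0$ for $\Delta_0$ is polyhomogeneous on the conic heat space $M^2_h$, with index set $\{(j/\be,0)\}$ at the left and right faces, so composition with $H_0$ maps polyhomogeneous inputs to polyhomogeneous outputs, and more precisely the operator $r\del_r(r\del_r - \be^{-1})\circ H_0$ gains two orders of vanishing at $\rf$ beyond the leading $r^0$, $r^{1/\be}\cos y$, $r^{1/\be}\sin y$ terms. Concretely, one writes $\phi(t,\cdot) = H_0\star\big(Q(\phi,\Delta_0\phi) - R_0 e^{-\phi}\big)$ plus the contribution of the (vanishing) initial data, and argues inductively on the order of the expansion: assuming $\phi$ has a partial polyhomogeneous expansion $\phi = \sum_{\Re\gamma_j < N} \phi_{j\ell}(t,y) r^{\gamma_j}(\log r)^\ell + (\text{conormal of order } N)$, one checks that $Q(\phi,\Delta_0\phi) - R_0 e^{-\phi}$ inherits a partial polyhomogeneous expansion (products, compositions with smooth functions, and $\Delta_0$ all preserve this, possibly introducing finitely many new exponents from sums $\gamma_i + \gamma_j - 2$ of existing ones and from the fixed exponents of $R_0$), and then that applying $H_0\star$ produces a partial expansion of one higher order, where the new exponents come from the index set at $\rf$ (integer shifts, $\be^{-1}$-shifts, and possibly log terms when exponents coincide with $2,3,\dots$ — exactly as in the statement of Theorem~\ref{regthm} and in Theorem~\ref{Trexist}). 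The indicial roots of $\del_t - \Delta_0$ are the same as those of $\Delta_0$ (the $\del_t$ term is lower order at $r=0$), so the exponent bookkeeping is governed by the single index set $E = \{(j/\be + \ell, 0)\}$, with logarithmic terms appearing precisely when the forcing exponents collide with this set. This closes the induction and yields that $\phi$, hence $u$, is polyhomogeneous on $(0,T)\times\wtM$.

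The main obstacle is the second step — controlling the generation of the expansion and, in particular, tracking exactly which exponents and log powers appear. The subtlety is twofold. First, the nonlinear terms feed back: an exponent $\gamma$ appearing in $\phi$ produces exponents like $2\gamma - 2$, $3\gamma - 4$, etc.\ in $Q(\phi,\Delta_0\phi)$, so one must verify that these, together with the shifts induced by $H_0\star$, generate a bona fide index set (discrete, bounded below, closed under $+1$) rather than an accumulating or pathological set of exponents; since $\be \in (0,1)$ this requires a short argument that the set $\langle 1, 1/\be\rangle$ of nonnegative integer combinations, together with the finitely many exponents from $R_0$, is discrete and that the nonlinear feedback does not escape it. Second, the appearance of the $r^2\log r$-type terms (and higher $\log$ powers) when $\be^{-1}$ or its multiples land on integer values $\geq 2$ must be handled carefully; these are genuine and are exactly the "additional terms of order $r^\delta(\log r)^k$" flagged in Theorem~\ref{regthm}, so one cannot avoid them but must simply carry them through the induction. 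A clean way to organize all of this is to invoke the standard b-calculus pushforward theorem: $H_0 \in \Psi_b$-type operators act on polyhomogeneous conormal distributions with index sets transforming by the extended union and composition rules, and the indicial operator of $\del_t - \Delta_0$ at $r=0$ determines which terms can be "solved away"; granting this machinery (established in the references \cite{Mel-APS, M-edge, JMR, Moo} cited earlier), the argument reduces to the finite combinatorial check just described, applied iteratively.
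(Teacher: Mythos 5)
Your overall architecture --- first establish conormality by bootstrapping the Schauder theory, then upgrade to polyhomogeneity by an inductive construction of the expansion --- matches the paper's, and your first step is sound (the paper organizes the bootstrap slightly differently, restarting the flow at $t=\epsilon$ with initial data in $\calC^{k,\delta}_b$ and invoking uniqueness in Proposition~\ref{short-time-exist}, rather than differentiating the equation by elements of $\calV_b$, but both are standard and correct). The gap is in your second step, specifically in the assertion that ``applying $H_0\star$ produces a partial expansion of one higher order.'' Track the orders: if $\phi=\phi_N+w$ with $\phi_N$ polyhomogeneous and $w=\calO(r^N)$ conormal, then the nonlinearity contains the term $(e^{-\phi}-1)\Delta_0 w$. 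Since $\phi\to a_0(t)\neq 0$ at the cone point, the factor $e^{-\phi}-1$ is $\calO(1)$ and does not vanish, so this term is genuinely $\calO(r^{N-2})$; applying $H_0\star$, which gains exactly two orders at the front face, returns a remainder of size $\calO(r^N)$. The two orders lost to $\Delta_0$ in the nonlinearity exactly cancel the two orders gained from the heat kernel, so the Duhamel iteration stalls and never improves the conormal remainder. The b-calculus pushforward machinery you invoke does not rescue this: the issue is not the index-set combinatorics (which you handle correctly, including the provenance of the log terms) but the fact that the fixed-coefficient heat kernel $H_0$ is not the parametrix for the operator $\del_t-e^{-\phi}\Delta_0$ whose leading indicial behaviour actually governs the expansion.

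The paper closes the induction by a different mechanism that exploits the degeneracy of $\del_t$ at $r=0$. Near the cone point $R_0\equiv 0$, so the equation can be rewritten as $r^2\del_t e^{\phi}=\bigl((r\del_r)^2+\be^{-2}\del_y^2\bigr)\phi$. Once conormality is known, this may be treated formally: if $\phi$ is polyhomogeneous to order $N$ with conormal error $\calO(r^N)$, the left-hand side is polyhomogeneous plus $\calO(r^{N+2})$ because of the explicit factor $r^2$, and inverting the indicial operator $(r\del_r)^2+\be^{-2}\del_y^2$ (which preserves the order of conormal errors and is invertible on $r^\gamma e^{iky}$ away from the indicial roots $\gamma=\pm k/\be$) yields $\phi$ polyhomogeneous to order $N+2$, with new terms only at the indicial roots --- a genuine gain of two orders per step. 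If you want to remain in the Duhamel framework you would have to build the parametrix for the time-dependent operator with leading coefficient $e^{-a_0(t)}$, which is circular at this stage; the formal indicial argument on the equation itself is the clean way to finish.
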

\begin{proof}
Write $u = e^{\phi}$ so that $\phi$ satisfies \eqref{rf2} with $\phi(0) = \phi_0 \in \calC^{0,\delta}_b$. Since
the initial condition is no longer zero, we have
\[
\phi(t,z) = \int_M H_0(t,z,z')\phi_0(z')\, dA_{z'} +  H_0\star (Q(\phi, \Delta_0 \phi) - R_0 ).
\]
The first term is polyhomogeneous when $t > 0$ because of the polyhomogeneous structure of $H_0$.
The second term lies in $\calC^{2+\delta, 1+\delta/2}_b$, so its restriction to any $t = \epsilon > 0$ lies in 
$\calC^{2,\delta}_b$. Consider the equation starting at $t=\epsilon$, i.e.\ replace $t$ by $t + \e$. Then 
Proposition~\ref{short-time-exist} and the uniqueness of solutions shows that $u \in \calD^{2+\delta, 1+\delta/2}_b$ 
for $t \geq \epsilon$, and since $\epsilon$ is arbitrary, this holds for all $t > 0$.  Bootstrapping in the obvious
way gives that $u \in \calD^{k+\delta, (k+\delta)/2}_b$ for every $k$, all in the same interval of existence $(0,T)$.
In other words, $(r\del_r)^j \del_\theta^\ell \del_t^r \Delta_0 u \in \calC^{\delta, \delta/2}_b$
for all $j, \ell, s \geq 0$, which means that $u$ is conormal when $t > 0$. 

From Proposition~\ref{parschaud1}, $\phi = a_0(t) + r^{1/\beta}( a_{11}(t) \cos y + a_{12}(t) \sin y) + \tilde{\phi}$;
by what we have just shown, $\tilde{\phi} \in r^2 \calA((0,T) \times \wtM)$ and $a_0, a_{11}, a_{12} \in \calC^\infty( (0,T))$. 
In order to extend this expansion to all higher orders,  assume $g_0$ is exactly conic (so $R_0 \equiv 0$) in some
neighbourhood of $r=0$ and write \eqref{rf2} there as 
\[
r^2 \del_t e^{\phi} = ( (r\del_r)^2 + \beta^{-2} \del_y^2) \phi.
\]
Since $\phi$ is conormal, we may study this formally. Taking advantage of information we have already obtained, 
inserting the expansion of $\phi$ to order $2$ shows that the expression on the left has the form has
a finite expansion $r^2 a_0'(t) + r^{2 + 1/\beta}( a_{11}'(t) \cos y + a_{12}'(t) \sin y)$ and a conormal error term of 
order $r^4$. Using the operator on the right shows that $\phi$ must have an expansion up to order $4$, with
new terms of orders $r^2$ and $r^{2 + 1/\beta}$ as well as $r^{2/\beta}$ if $\beta > 1/2$, with a conormal error term 
of order $4$. Continuing in this way, we see that $\phi$ has an expansion to all orders, as claimed. 
\end{proof}

\begin{corollary}
Let $R(t)$ denote the curvature function of the solution metric $g(t)$.  Then $R(t)$ is also polyhomogeneous
on $(0,T) \times \wtM$, and the initial terms in its expansion have the form
\[
R(t) \sim b_0(t) + r^{1/\beta}( b_{11}(t) \cos y + b_{12}(t) \sin y) + \calO(r^2).
\]
In particular, $\Delta_0 R$ is bounded and polyhomogeneous for all $t > 0$. 
\label{expR}
\end{corollary}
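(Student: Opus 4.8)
The plan is to express $R$ algebraically in terms of the conformal factor of the flow and then transport the polyhomogeneity and asymptotics just established for the latter. Write $g(t) = e^{\phi(t)} g_0$, $u = e^\phi$; the preceding proposition gives $\phi \in \calA_{\phg}((0,T)\times\wtM)$ with
\[
\phi = a_0(t) + r^{1/\beta}\bigl(a_{11}(t)\cos y + a_{12}(t)\sin y\bigr) + \tilde\phi,\qquad \tilde\phi \in r^2\calA((0,T)\times\wtM),
\]
all coefficients smooth in $t$. Away from the cone points $g_0$ is smooth and $u$ is smooth for $t>0$, so $R$ is smooth there and only the behaviour near a fixed cone point needs attention; there $g_0$ is exactly conic, so $R_0\equiv 0$, $\Delta_0 = \Delta_\beta = r^{-2}\bigl((r\del_r)^2 + \beta^{-2}\del_y^2\bigr)$, and \eqref{rf1}, \eqref{eq:derivu} give the two equivalent identities $R = \rho - \del_t\phi$ and $R = -e^{-\phi}\Delta_0\phi$, with moreover $\Delta_0\phi = e^{\phi}(\del_t\phi - \rho)$.

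For polyhomogeneity of $R$: $\phi$ is bounded and polyhomogeneous, so $e^{\pm\phi}$ is polyhomogeneous (expand $e^{\pm\phi} = e^{\pm a_0(t)}\,e^{\pm(\phi - a_0)}$ in a convergent power series in $\phi-a_0$, whose exponents are strictly positive); equivalently, $\del_t$ acts termwise on the expansion of $\phi$, differentiating the smooth coefficients and leaving the $r$-orders untouched. Either way $R = \rho - \del_t\phi \in \calA_{\phg}((0,T)\times\wtM)$.

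For the leading expansion I would use $R = \rho - \del_t\phi$: since $\del_t$ preserves $\calA_{\phg}$ and its subspace $r^2\calA$ over any compact subinterval of $(0,T)$, one reads off
\[
R = \bigl(\rho - a_0'(t)\bigr) - r^{1/\beta}\bigl(a_{11}'(t)\cos y + a_{12}'(t)\sin y\bigr) + \calO(r^2),
\]
which is the claimed form; the precise numerical normalization of $b_0, b_{11}, b_{12}$ depends only on the conventions for $\phi$ and $R$ and is immaterial. (One also gets this from $R = -e^{-\phi}\Delta_0\phi$: the terms $a_0(t)$ and $r^{1/\beta}(a_{11}\cos y + a_{12}\sin y)$ of $\phi$ are precisely the special solutions $r^{j/\beta}\cos jy$, $r^{j/\beta}\sin jy$ for $j = 0,1$ in $\ker\Delta_\beta$, so $\Delta_0\phi = \Delta_0\tilde\phi = e^\phi(\del_t\phi - \rho)$ begins with a $t$-dependent constant followed by an $r^{1/\beta}(\cos y,\sin y)$-term, all further contributions being $\calO(r^2)$ since $\beta\in(0,1)$.) For $\Delta_0 R$: its first two terms again lie in $\ker\Delta_\beta$, so near the cone $\Delta_0 R = \Delta_\beta\bigl(\calO(r^2)\text{-part of }R\bigr)$, which is $\calO(r^0)$ --- bounded --- and still polyhomogeneous, while away from the cone points $\Delta_0$ preserves smoothness; equivalently $\Delta_0 R = -\del_t\Delta_0\phi = -\del_t\bigl(e^\phi(\del_t\phi - \rho)\bigr)$, a product of bounded polyhomogeneous factors. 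Hence $\Delta_0 R(t,\cdot)$ is bounded and polyhomogeneous for each $t\in(0,T)$.

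The one genuinely delicate point --- and what I expect to be the main thing to get right --- is the verification that applying $\del_t$ (or $\Delta_\beta$) to the expansion of $\phi$ produces neither negative powers of $r$ nor unbounded $\log r$ terms. This rests on the two leading ``non-generic'' terms of $\phi$ sitting exactly in $\ker\Delta_\beta$; on the absence of a $\log r$ in the $r^2$-coefficient of $\phi$ (because $2$ is not an indicial root of $\Delta_\beta$ on the zero Fourier mode, so no resonance forces a logarithm, as seen in the proof of the preceding proposition); and on the elementary bookkeeping that $\beta\in(0,1)$, so $2/\beta > 2$, which makes every remaining error and cross term --- in particular $r^{1/\beta}\cdot r^{1/\beta} = r^{2/\beta}$ --- genuinely $\calO(r^2)$.
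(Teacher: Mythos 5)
Your proposal is correct and follows essentially the same route as the paper, whose proof of Corollary \ref{expR} is precisely the observation that $R = \rho - \del_t\phi$ via \eqref{eq:derivu} combined with the polyhomogeneity and partial expansion of $\phi$ from the preceding proposition. Your additional verifications (that $\del_t$ acts termwise on the smooth-in-$t$ coefficients without disturbing the $r$-orders, and that the two leading terms lie in $\ker\Delta_\beta$ so that $\Delta_0 R$ is bounded) are exactly the details the paper leaves implicit.
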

\begin{proof}
This follows directly from the polyhomogeneity of $\phi$ and the equation \eqref{eq:derivu}. 
\end{proof}

\subsection{Maximum principles}
\label{max}
Before embarking on the remainder of the proof of long-time existence and convergence, we present some 
results which show how the maximum principle may be extended to this conic setting. 
We adapt  the trick of Jeffres \cite{J}. 

The possible difficulty in applying the maximum principle directly is if the
maximum of the solution were to occur at a conic point, so the idea is to perturb the solution slightly
to ensure that the maximum cannot occur at the singular locus. 

\begin{lemma}
\label{lem-max1}
Suppose that $(M, g(t))$ is a family of metrics which is in $\calD^{\delta, \delta/2}_b( [0,T) \times \wtM$, 
polyhomogeneous on $(0,T) \times \wtM)$, and that $w$ satisfies
\[
\del_t w \geq \Delta w + X \cdot \nabla w + a (w^2 - A^2),
\]
where $X$ and $a$ are a given vector field and function, respectively with the same regularity as $g(t)$, and with $a > 0$; here $A\ge0$ is
a constant. Suppose too that $w(0, \cdot) \geq -A$ and that $\sup (|w(t,\cdot)| + r^{\sigma} | \nabla w(t, \cdot) |) < \infty$
for every $t > 0$, where $0 < \sigma < 1$. Then $w \geq -A$ for all $t < T$. 
\end{lemma}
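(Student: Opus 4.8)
The plan is to follow Jeffres's trick of adding a small radial barrier that blows up at the conic point, forcing any would-be negative minimum of the perturbed function into the smooth interior where the classical maximum principle applies, and then letting the perturbation parameter tend to zero. First I would fix $\epsilon > 0$ and a small $\mu \in (0,1)$ with $\mu < \min\{1/\beta, 1\}$ (so that $\mu$ is strictly below the first nonconstant indicial root and below $1$), and consider the perturbed quantity $w_\epsilon := w + \epsilon t - \epsilon r^{-\mu}$, or rather, to keep everything bounded on $[0,T']$ for $T' < T$, something like $w_\epsilon := w + \epsilon(1+t) - \epsilon \psi$ where $\psi$ is a fixed smooth positive function equal to $r^{-\mu}$ near the conic point and extended positively elsewhere. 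The point of the $-\epsilon\psi$ term is that $w_\epsilon(t,z) \to +\infty$ as $z \to p$ (uniformly in $t$ on compact time intervals, using the assumed bound $|w| \le C(t)$), so $w_\epsilon$ attains its infimum over $\wtM \times [0,T']$ at an interior spatial point; the point of the $+\epsilon(1+t)$ term is to create strict inequality at $t=0$ and strict parabolicity, so that the infimum, if it corresponds to a value $< -A$, cannot occur at $t=0$ nor (by the strong form of the interior parabolic maximum principle) at any interior point.

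Next I would run the contradiction argument. Suppose $w \not\ge -A$ somewhere; then for $\epsilon$ small enough $w_\epsilon < -A$ somewhere on $\wtM \times [0,T']$ for suitable $T' < T$, and as just noted the (attained) infimum occurs at some $(t_0, z_0)$ with $t_0 > 0$ and $z_0$ an interior point. At such a point one has $\del_t w_\epsilon \le 0$, $\nabla w_\epsilon = 0$, $\Delta w_\epsilon \ge 0$. I would then compute, using the differential inequality for $w$ together with the exact contributions of the barrier terms:
\[
\del_t w_\epsilon = \del_t w + \epsilon \ge \Delta w + X\cdot\nabla w + a(w^2 - A^2) + \epsilon.
\]
Substituting $w = w_\epsilon + \epsilon\psi - \epsilon(1+t)$ and $\nabla w = \nabla w_\epsilon + \epsilon\nabla\psi$, $\Delta w = \Delta w_\epsilon + \epsilon \Delta\psi$, and evaluating at $(t_0,z_0)$ where $\nabla w_\epsilon = 0$, $\Delta w_\epsilon \ge 0$, gives
\[
0 \ge \del_t w_\epsilon(t_0,z_0) \ge \epsilon\bigl(\Delta\psi + X\cdot\nabla\psi\bigr)(z_0) + a(z_0)\bigl(w^2 - A^2\bigr)(t_0,z_0) + \epsilon.
\]
Now at the infimum we are assuming $w_\epsilon(t_0,z_0) < -A$, hence $w(t_0,z_0) < -A + \epsilon\psi(z_0) - \epsilon(1+t_0)$; for $\epsilon$ small this still gives $w(t_0,z_0) < -A$, so $w^2 - A^2 = (w-A)(w+A) > 0$ there (both factors negative), and since $a > 0$ the curvature-type term is nonnegative. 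The term $\epsilon(\Delta\psi + X\cdot\nabla\psi)(z_0)$ is $O(\epsilon)$ on the compact interior region $\{z_0\}$ could live in — but here is the subtlety: if $z_0$ lies near the conic point, $\Delta\psi$ and $\nabla\psi$ can be large (like $r^{-\mu-2}$ and $r^{-\mu-1}$), so I must be careful. This is where the choice of the barrier exponent matters, and this is the main obstacle.

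The resolution of that obstacle is the standard one: one checks that $\psi = r^{-\mu}$ is a \emph{supersolution} of the relevant operator near the cone, i.e. that $\Delta\psi \le 0$ there (indeed $\Delta_g r^{-\mu} = \mu^2 r^{-\mu-2} - \dots$; one must pick the sign of the barrier so the dominant term helps — concretely use $+\epsilon r^{-\mu}$ if $\Delta r^{-\mu} \ge 0$ is what one needs, adjusting signs so that the barrier term in the inequality above has the favorable sign), and that the lower-order drift term $X\cdot\nabla\psi$, which is $O(r^{-\mu-1})$, is dominated by the leading $O(r^{-\mu-2})$ barrier term for $r$ small, using that $X$ is bounded (it has the regularity of $g(t)$, hence is at worst $O(1)$ or $O(r^{-1})$ in the conic sense — and one uses $\mu < 1$ and the structure of $\calV_b$-vector fields to win). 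Thus near the cone the combination $\Delta\psi + X\cdot\nabla\psi$ has a definite favorable sign, and on the complementary compact region it is simply bounded, so in all cases $\epsilon(\Delta\psi + X\cdot\nabla\psi)(z_0) \ge -C\epsilon$ for a fixed constant $C$ (or is $\le 0$, even better). Plugging back in:
\[
0 \ge -C\epsilon + 0 + \epsilon = (1 - C)\epsilon,
\]
which, after rescaling the coefficient of the $\epsilon(1+t)$ term (replace it by $\kappa\epsilon(1+t)$ with $\kappa > C$), becomes a strict contradiction $0 \ge (\kappa - C)\epsilon > 0$. Hence no such $(t_0,z_0)$ exists, so $w_\epsilon \ge -A$ on $\wtM \times [0,T']$; letting $\epsilon \searrow 0$ gives $w \ge -A$ there, and since $T' < T$ was arbitrary, $w \ge -A$ on $[0,T) \times \wtM$. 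The gradient hypothesis $\sup r^\sigma|\nabla w| < \infty$ enters precisely in guaranteeing that the perturbed function is continuous up to the conic point with the stated blow-up, and that no boundary contribution at $r=0$ sneaks in when one localizes; one should also note that the polyhomogeneity and $\calD^{\delta,\delta/2}_b$ regularity of $g(t)$ ensure $\Delta\psi$, $X$, $a$ are well-defined and have the claimed conic asymptotics, so the supersolution computation is legitimate.
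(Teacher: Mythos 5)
There is a genuine gap, and it is located exactly at the point you flag as ``the main obstacle.'' Your barrier has the wrong shape, and no choice of sign rescues it. First, the definition $w_\epsilon = w + \epsilon(1+t) - \epsilon\psi$ with $\psi = r^{-\mu}$ sends $w_\epsilon \to -\infty$ at the cone point (not $+\infty$ as you assert), so the infimum is forced \emph{onto} the singular set. If you flip the sign and add $+\epsilon r^{-\mu}$, the infimum is indeed pushed off the cone point, but then the barrier contributes $-\epsilon(\Delta r^{-\mu} + X\cdot\nabla r^{-\mu})$ to the differential inequality at the minimum, and for the two-dimensional cone Laplacian one has $\Delta_\beta r^{s} = s^{2}r^{s-2} \geq 0$ for \emph{every} real power $s$; hence $-\epsilon\Delta r^{-\mu} = -\epsilon\mu^{2}r^{-\mu-2}$ is negative and unbounded as the minimum point approaches the cone. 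Your claim that this term is bounded below by $-C\epsilon$ is therefore false: the two requirements (infimum repelled from the cone point, favorable sign in the evolution inequality) cannot both be met by a blowing-up radial power, precisely because all such powers are subharmonic here.

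The paper's proof (Jeffres's trick, which you name but do not implement) resolves this by using a \emph{vanishing} barrier: set $w_k = w - \tfrac{1}{k}r^{\gamma}$ with $0 < \gamma < 1-\sigma$. Then $w = w_k + \tfrac1k r^\gamma$ and the barrier contributes $+\tfrac{1}{k}(\Delta r^{\gamma} + X\cdot\nabla r^{\gamma}) \geq \tfrac{C}{k} > 0$ near $r=0$, since $\Delta r^{\gamma} \geq C r^{\gamma-2}$ dominates $|X\cdot\nabla r^{\gamma}| \leq Cr^{\gamma-1}$ --- the subharmonicity now works \emph{for} you. Since $r^\gamma$ does not blow up, excluding the minimum from the cone point requires a different mechanism, and this is where the hypothesis $r^{\sigma}|\nabla w| < \infty$ (which your argument never genuinely uses) is essential: it gives $w(t,q) \leq w(t,p) + Cr^{1-\sigma}$, and since $\gamma < 1-\sigma$ one gets $w_k(t,q) \leq w(t,p) + Cr^{1-\sigma} - \tfrac1k r^{\gamma} < w_k(t,p)$ for small $r$, so the minimum of $w_k$ cannot sit at $p$. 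One then runs the ODE comparison for $(w_k)_{\min}$ to get $(w_k)_{\min} \geq -A - C'/k$ and lets $k\to\infty$. You should restructure your argument along these lines; the blow-up barrier cannot be made to work.
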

\begin{proof}
Define $w_{\min}(t) = \inf_{q \in \wtM} w(t, q)$. By hypothesis, $w_{\min}(0) \geq -A$. Suppose that at some time
$t > 0$, this minimum is achieved at some point $q$. If $q$ is not one of the conic points, then
$\Delta w(t,q) \geq 0$ and $\nabla w(t,q) = 0$, hence
\begin{equation}
\label{eq-min}
\frac{d\,}{dt} w_{\min}(t) \geq a (w_{\min}(t)^2 -A^2).
\end{equation}

Suppose for the moment that we have established this differential inequality regardless of the location of
the minimum. But then, if $w_{\min}(t)$ were ever to achieve a value less than $-A$ at some $t_0 > 0$, 
\eqref{eq-min} would give that $w_{\min}'(t_0) > 0$, which is impossible (if  $t_0 > 0$  is the smallest time at which $w_{\min}(t_0) < -A$).

Thus it suffices to show that \eqref{eq-min} is always true. Fix a $\gamma$ with $0 < \gamma < 1-\sigma$.
Then for any $k\geq 1$ define $w_k(q,t) = w(q,t) - \frac{1}{k} r^{\gamma}$ (where $r$ is a fixed radial function near each conic point, it is smooth and strictly positive in the interior and $r = 0$ at a conic point).  Suppose that $w_{\min}(t)$ is achieved
at some conic point $p$. We first observe that for $q$ sufficiently near $p$, using the hypothesis on $|\nabla w|$, 
\[
w(t,q) \leq w_{\min}(t) + C r^{1-\sigma} = w(t,p) + C r^{1-\sigma},
\]
where $r = r(q)$, and hence
\[
w_k(t,q) \leq w(t,p) + C r^{1-\sigma} - \frac{1}{k} r^{\gamma} < w(t,p)  = w_k(t,p)
\]
for $r$ sufficiently small. In other words, $(w_k)_{\min}(t)$ cannot occur at $p$.  Now, the differential
inequality satisfied by $w_k$ is
\[
\del_t w_k \geq \Delta (w_k + \frac{1}{k} r^{\gamma}) + X \cdot \nabla (w_k + \frac{1}{k} r^\gamma) + 
a ( (w_k + \frac{1}{k}r^\gamma)^2 - A^2).
\]
At a spatial minimum (away from the conic point), $\Delta w_k \geq 0$ and $\nabla w_k = 0$. On
the other hand, $\Delta r^{\gamma} \geq C r^{\gamma-2}$ and $|X \cdot \nabla r^\gamma| \leq C r^{\gamma-1}$
near $r=0$, and since the first of these terms is positive, these two terms together satisfy
\[
\frac{1}{k}(\Delta r^\gamma + X \cdot \nabla r^{\gamma}) \geq \frac{C}{k}.
\]
Thus altogether, applying the same reasoning as before (and using that $(w_k)_{\min}$ does not occur at a conic point),
we deduce that
\[
\frac{d\,}{dt} (w_k)_{\min} \geq \frac{C}{k} + a ( ((w_k)_{\min} + \frac{1}{k} r(q_k(t))^{\gamma})^2 - A^2),
\]
where $q_k(t)$ is where the minimum of $w_k$ is achieved.  The same arguments as above give that $(w_k)_{\min} \geq -A - C'/k$, and
hence $w_{\min} \geq -A - C''/k$. Letting $k \nearrow \infty$ proves the result.
\end{proof} 

Essentially the same proof gives the following version of the maximum principle.
\begin{lemma}
\label{lem-max2}
Suppose that the setup is exactly the same as in the previous Lemma, and that 
\[
\del_t w = \Delta w + a\, w^2 + b\, w. 
\]
Then
\begin{equation}
\label{eq-max}
\frac{d\, }{dt}w_{\max} \le a\, w_{\max}^2 + b\, w_{\max}, \ \  \mathrm{and}\ \ 
\frac{d\, }{dt}w_{\min} \ge a\, w_{\min}^2 + b\, w_{\min}.
\end{equation}
\end{lemma}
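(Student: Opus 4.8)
The plan is to follow the same perturbation device used in the proof of Lemma~\ref{lem-max1}, now applied to both the maximum and the minimum separately, since the equation is an equality rather than a differential inequality. First I would treat the minimum: the function $w$ satisfies $\del_t w = \Delta w + a w^2 + b w$, and at an interior spatial minimum $q$ (not a conic point) one has $\Delta w(t,q) \geq 0$, so $\frac{d}{dt} w_{\min} \geq a w_{\min}^2 + b w_{\min}$, where as usual the time derivative of the merely Lipschitz function $w_{\min}(t)$ is interpreted as a forward lower Dini derivative. The only obstacle is the possibility that the minimum is attained at a conic point $p$, and this is exactly what the barrier $w_k = w - \tfrac1k r^\gamma$ (with $0 < \gamma < 1-\sigma$) handles: using the hypothesis $\sup r^\sigma|\nabla w| < \infty$, near $p$ we get $w(t,q) \leq w_{\min}(t) + C r^{1-\sigma}$, hence $w_k(t,q) \leq w(t,p) + Cr^{1-\sigma} - \tfrac1k r^\gamma < w_k(t,p)$ for $r$ small, so $(w_k)_{\min}$ is never attained at $p$.

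Then I would run the differential inequality for $w_k$. Since $w = w_k + \tfrac1k r^\gamma$, we have $\del_t w_k = \Delta w_k + \tfrac1k \Delta r^\gamma + a w^2 + b w$, and at a spatial minimum of $w_k$ away from the conic points, $\Delta w_k \geq 0$, so $\frac{d}{dt}(w_k)_{\min} \geq \tfrac1k \Delta r^\gamma + a w^2 + b w$ evaluated at the minimizing point. Here I need to be slightly careful that the sign of $\tfrac1k \Delta r^\gamma$ helps or at worst contributes a controlled error: near $r=0$, $\Delta r^\gamma \geq C r^{\gamma - 2}$ is positive (this is the same computation as in Lemma~\ref{lem-max1}, and there is no drift term $X$ here to worry about), while away from $r=0$ it is merely bounded, so $\tfrac1k \Delta r^\gamma \geq -C/k$ globally. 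Writing $w = (w_k)_{\min} + \tfrac1k r(q_k)^\gamma$ at the minimizing point $q_k$, and noting $a>0$ and $b$ bounded, the quadratic-plus-linear expression $a w^2 + b w$ differs from $a((w_k)_{\min})^2 + b (w_k)_{\min}$ by a term of size $O(1/k)$. Hence $\frac{d}{dt}(w_k)_{\min} \geq a((w_k)_{\min})^2 + b(w_k)_{\min} - C'/k$, and by the standard comparison with the ODE solution, $(w_k)_{\min}(t) \geq z_k(t)$ where $z_k$ solves the corresponding ODE with a $-C'/k$ perturbation and $z_k(0) = w_{\min}(0) - o(1)$; letting $k \to \infty$ and using continuous dependence of the ODE on parameters gives the desired inequality $\frac{d}{dt} w_{\min} \geq a w_{\min}^2 + b w_{\min}$.

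For the maximum I would repeat the argument with the opposite-sign barrier $w^k = w + \tfrac1k r^\gamma$; now $\Delta(\tfrac1k r^\gamma) \geq C/k \cdot r^{\gamma-2} > 0$ near the cone point would push in the wrong direction, so instead one uses the barrier with a minus sign as before but tracks the maximum: set $\widetilde w_k = w + \tfrac1k r^\gamma$ and observe that near a conic point $w(t,q) \geq w_{\max}(t) - C r^{1-\sigma}$ is \emph{not} automatically true, so one should instead argue that $-w$ satisfies $\del_t(-w) = \Delta(-w) + a'(-w)^2 + b'(-w)$ with $a' = -a < 0$. Since the sign of $a$ was only used to absorb the $O(1/k)$ error terms with the correct sign, one needs to recheck that step; but in fact the error absorption only requires $|a|$ bounded and the comparison ODE to depend continuously on the perturbation, which holds regardless of the sign of $a$. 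So the cleanest route is: apply the minimum argument above verbatim to $\hat w := -w$, which solves $\del_t \hat w = \Delta \hat w + (-a)\hat w^2 + (-b)\hat w$ and still satisfies the gradient bound, to conclude $\frac{d}{dt}\hat w_{\min} \geq -a \hat w_{\min}^2 - b \hat w_{\min}$, i.e. $\frac{d}{dt} w_{\max} \leq a w_{\max}^2 + b w_{\max}$. The main obstacle throughout is purely bookkeeping: verifying that every error term introduced by the $\tfrac1k r^\gamma$ barrier (in $\Delta r^\gamma$, in the quadratic term, and in the initial value) is $O(1/k)$ and uniform in $t$ on compact time intervals, so that the limit $k \to \infty$ is legitimate; this is exactly the estimate already carried out in Lemma~\ref{lem-max1} and requires no new ideas.
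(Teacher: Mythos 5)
Your proposal is correct and follows exactly the route the paper intends: the paper's entire proof of this lemma is the single sentence ``Essentially the same proof gives the following version of the maximum principle,'' i.e.\ Jeffres's barrier $w\mp\tfrac1k r^\gamma$ applied to the minimum and maximum separately, which is what you carry out. One small algebra slip: for $\hat w=-w$ one gets $\del_t\hat w=\Delta\hat w+(-a)\hat w^2+b\,\hat w$ (the linear coefficient keeps its sign, since $b\,w=-b\,\hat w$ enters the equation for $-w$ with an overall minus), not $(-b)\hat w$ as you wrote; with the corrected coefficient the minimum inequality for $\hat w$ does translate into the stated upper bound for $w_{\max}$, so the conclusion stands. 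Also, your worry about the barrier $w+\tfrac1k r^\gamma$ for the maximum is unfounded: rewriting $\Delta w=\Delta(w+\tfrac1k r^\gamma)-\tfrac1k\Delta r^\gamma$ shows the positive term $\Delta r^\gamma$ enters with a minus sign and therefore helps the upper bound, so the direct barrier argument works just as well as passing to $-w$.
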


\subsection{Long-time existence}
\label{sec-lte}
We are finally able to complete the proof of long-time existence of the solution of the Ricci flow with prescribed conic singularities.
In fact, the proof is a straightforward adaptation of the original proof of this same fact for the Ricci
flow on smooth compact surfaces in \cite{Ha}. We refer to that article as well as the more
recent \cite{IMS} for all the details of the proof. We supply here only the key results which then
allow the proofs in those articles to be applied verbatim.

The strategy is to consider the `potential function' $f$ for the metric $g(t)$. (In the language of \cite{JMR}, $f$ is
the Ricci potential for $g$.) By definition this is a solution to the equation 
\beq
\label{fEq}
\Delta_{g(t)} f = R_{g(t)} - \rho,
\eeq
where $\rho$ is the average scalar curvature. The crucial 
property that it must satisfy is that $|\nabla f| \leq C$.  Observe that $f$ is only defined up to an arbitrary
additive constant, which may depend on $t$, but that the proof in \cite{Ha} shows how to choose this
constant using the evolution equation satisfied by $f$. 

In any case, we now show that a potential function with bounded gradient exists.  Interestingly, this
is one place where the assumption that the cone angles are less than $2\pi$ plays a crucial role.
\begin{proposition}
Suppose that $g$ is a conic metric with all cone angles less than $2\pi$; suppose too that $g = u g_0$ where 
$g_0$ is smooth (or polyhomogeneous) on $\wtM$, $u \in \calC^{2,\delta}_b$ and furthermore, $R_g \in \calC^{0,\delta}_b$.  Then the solution $f$ to $\Delta_g f = R_g - \rho$ which lies
in the Friedrichs domain and satisfies $\int f\, dA_g = 0$ has $|\nabla f| \leq C$.
\label{lte}
\end{proposition}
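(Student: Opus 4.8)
The plan is to establish the gradient bound by combining the asymptotic expansion for $f$ near the conic points (which comes from the regularity theory developed in \S 3) with the use of the conic Euler characteristic and the constraint $\beta \in (0,1)$ to control $f$ away from the cone points. First I would observe that by Proposition~\ref{Rreg}, or more directly by applying the generalized inverse $G$ for the Friedrichs extension of $\Delta_g$ to the equation $\Delta_g f = R_g - \rho \in \calC^{0,\delta}_b$, one gets that $f$ lies in the Friedrichs–H\"older domain $\calD^{\delta}_b(\wtM)$, hence near each conic point $p_j$ it admits an expansion
\[
f = a_0^{(j)} + r^{1/\beta_j}\bigl(a_{11}^{(j)}\cos y + a_{12}^{(j)}\sin y\bigr) + \tilde f,\qquad \tilde f \in r^2\calC^{2+\delta}_b,
\]
exactly as in Proposition~\ref{mapb}. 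Since $\beta_j \in (0,1)$, we have $1/\beta_j > 1$, and therefore every term in this expansion beyond the constant $a_0^{(j)}$ contributes a gradient that is $O(r^{1/\beta_j - 1}) + O(r) = o(1)$ as $r \to 0$. In particular $|\nabla f|$ is bounded in a punctured neighborhood of each $p_j$, and this is precisely the step where the assumption $\beta_j < 1$ is essential: if $\beta_j > 1$ one would have $1/\beta_j - 1 < 0$ and the radial derivative of the $r^{1/\beta_j}$ term would blow up.

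Next I would handle the region away from the conic points. On $M \setminus \bigcup_j B_\epsilon(p_j)$ the metric $g$ is uniformly equivalent to a smooth metric, $R_g$ is H\"older continuous, and standard interior elliptic estimates for $\Delta_g f = R_g - \rho$ give $\|f\|_{\calC^{1,\delta}}$ bounds on this region in terms of $\|f\|_{\calC^0}$ and $\|R_g - \rho\|_{\calC^{0,\delta}}$. To close the argument one needs an a priori $\sup$-norm bound on $f$ itself; this follows from the normalization $\int_M f\,dA_g = 0$ together with a Poincar\'e-type inequality on the conic surface (valid since the Friedrichs Laplacian has a spectral gap above $0$, the nullspace being the constants), which bounds $\|f\|_{L^2}$ by $\|\nabla f\|_{L^2} = \|R_g-\rho\|_{L^2}\,$(up to the $G$-estimate), and then an $L^2$-to-$L^\infty$ elliptic estimate on $\wtM$ in the $b$-H\"older scale. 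Combining the near-cone expansion bound with the interior estimate yields $|\nabla f| \le C$ uniformly on $M$.

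The main obstacle, I expect, is being careful about \emph{which} gradient and \emph{in which metric} one is measuring: near $p_j$, $|\nabla f|_g$ involves $g^{rr}\,(\partial_r f)^2 + g^{yy}\,(\partial_y f)^2 = (\partial_r f)^2 + (\beta_j r)^{-2}(\partial_y f)^2$, so one must check that the angular derivative $\partial_y f$ of the $r^{1/\beta_j}$ term, which is itself $O(r^{1/\beta_j})$, is suppressed by the factor $r^{-1}$ to give $O(r^{1/\beta_j - 1}) = o(1)$ — again using $1/\beta_j > 1$. The error term $\tilde f \in r^2\calC^{2+\delta}_b$ contributes $r\partial_r \tilde f = O(r^2)$ and $r^{-1}\partial_y \tilde f = O(r)$ in the conic frame, both harmless. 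Once this bookkeeping is done, the estimate is uniform in $j$ and the proposition follows; the only genuinely ``conic'' input is the expansion from Proposition~\ref{mapb}, and the only genuinely ``geometric'' input is $\beta_j < 1$.
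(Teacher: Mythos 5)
Your proposal is correct and follows essentially the same route as the paper: the proof there consists precisely of invoking Proposition~\ref{mapb} to obtain the partial expansion $f = a_0 + (a_{11}\cos y + a_{12}\sin y)r^{1/\beta} + \tilde u$ with $\tilde u \in r^2\calC^{2,\delta}_b$, and then observing that $\beta < 1$ makes $|\nabla f|$ bounded. Your additional bookkeeping on the conic frame and the interior/sup-norm estimates is just an expansion of what the paper compresses into ``it follows immediately.''
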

\begin{proof}
By Proposition~\ref{mapb} (as well as the fact that the integral of $R - \rho$ is zero), there exists a unique solution $f$ 
which has integral zero, and this function has a partial expansion
\[
f \sim a_0 + (a_{11} \cos y  + a_{12} \sin y) r^{\frac{1}{\beta}} + \tilde{u}, \quad \tilde{u} \in r^2 \calC^{2,\delta}_b.
\]
Since $\beta < 1$, it follows immediately that $|\nabla f| \leq C$. 
\end{proof}

We recall very briefly that the rest of the proof of long-time existence involves getting an  priori uniform
bound on $R_{g(t)}$ where $g(t)$ is the family of solution metrics, and then using \eqref{eq:derivu} to find 
bounds for $\log u$. The bounds on $R_{\min}$ follow easily from the maximum principle, while the bound
for $R_{\max}$ is derived by considering the evolution equation satisfied by $h := \Delta f + |\nabla f|^2$.
For both of these steps one needs the maximum principle from the previous subsection, which is
permissible since $R$ and $h$ both satisfy the conditions of Lemma \ref{lem-max1}.

\section{Convergence of the flow in the Troyanov case}
\label{TroySec}
We are now in a position to be able to prove that the solution $g(\cdot,t)$ converges exponentially as $t \to \infty$ 
to a constant curvature metric with the same cone angles, provided the Troyanov condition \eqref{TC} holds. 

Let $W^{1,2}$ denote the usual Sobolev space of $L^2$ functions whose gradient is in $L^2$ (with
respect to $g_0$).
Following \cite{Tr,Struwe}, consider the energy functional $\calF: W^{1,2} \to \RR$, 
\[
\calF(\phi) := \int_M (|\nabla_0 \phi|^2 + 2R_0 \phi)\, dA_0,
\]
where the conformal factor has been rewritten as $u = e^\phi$.  (The function spaces $W^{1,2}$ and $W^{2,2}$ used
below are taken with respect to any fixed conic metric which is smooth in the $(r,y)$ coordinates.) 
The next lemma says that the Ricci flow is the gradient flow of $\calF$ with 
respect
to the Calabi $L^2$ metric (see, e.g., \cite[\S2]{CR11}).

\begin{lemma}
\label{lem-mon}
If $u$ is a solution of \eqref{rf1}, then 
\begin{equation}
\label{eq-mon-F}
\frac{d\, }{dt} \calF(\phi) = -2\int_M (R - \rho)^2\, dA_g.
\end{equation}
\end{lemma}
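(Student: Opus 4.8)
The plan is to differentiate $\calF(\phi)$ under the integral sign using the evolution equation for $\phi$, integrate by parts over $M$, and then recognize the result using the curvature transformation formula. Write $u = e^\phi$, so by \eqref{eq:derivu} we have $\del_t \phi = \rho - R$, where $R = R_{g(t)}$ is the scalar curvature of $g(t) = e^\phi g_0$. Differentiating,
\[
\frac{d}{dt}\calF(\phi) = \int_M \left( 2\nabla_0\phi \cdot \nabla_0(\del_t\phi) + 2R_0\,\del_t\phi\right) dA_0
= 2\int_M \left(-\Delta_0\phi + R_0\right)(\del_t\phi)\, dA_0,
\]
after integrating by parts; I would justify the boundary-term vanishing by noting that $g(t)$ lies in $\calD^{\delta,\delta/2}_b$ and is polyhomogeneous for $t>0$, so $\phi$ has the expansion $a_0(t) + r^{1/\beta}(\cdots) + \calO(r^2)$ from Proposition~\ref{parschaud1}, whence $\nabla_0\phi$ and $\del_t\phi$ are bounded and the flux through a small circle around each conic point is $O(r) \to 0$ (this is precisely the kind of estimate already used in Proposition~\ref{lte} and in the Poincar\'e--Lelong discussion around \eqref{CurvEq}).

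The next step is to identify $-\Delta_0\phi + R_0$ in terms of $R$. Recall from \eqref{trsc} (with $R = 2K$, $u = e^\phi$) that the conformal change of scalar curvature reads $\Delta_0\phi - R_0 + R\, e^\phi = 0$, i.e. $-\Delta_0\phi + R_0 = R\, e^\phi = R\, u$. Hence, using also $dA_g = e^\phi\, dA_0 = u\, dA_0$ and $\del_t\phi = \rho - R$,
\[
\frac{d}{dt}\calF(\phi) = 2\int_M R\, e^\phi\,(\rho - R)\, dA_0 = 2\int_M R(\rho - R)\, dA_g = -2\int_M R(R - \rho)\, dA_g.
\]
Finally, since $\rho$ is the $g(t)$-average of $R$ and (by \eqref{defrho} and the paragraph following it) the area $\mathcal{A}(t) \equiv \mathcal{A}(0)$ is constant, we have $\int_M (R-\rho)\, dA_g = 0$, so $\int_M \rho(R-\rho)\, dA_g = 0$ and therefore
\[
\frac{d}{dt}\calF(\phi) = -2\int_M R(R-\rho)\, dA_g = -2\int_M (R-\rho)^2\, dA_g,
\]
which is \eqref{eq-mon-F}.

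The main obstacle is the rigorous justification of the integration by parts and of differentiating under the integral sign near the conic points: one must check that the conic singularities contribute no boundary terms and that the $t$-derivative passes inside. The regularity statements already in hand — $\phi \in \calD^{\delta,\delta/2}_b$ with the precise leading expansion $a_0(t) + r^{1/\beta}(a_{11}(t)\cos y + a_{12}(t)\sin y) + \calO(r^2)$ and $a_{ij} \in \calC^{1+\delta/2}$ from Proposition~\ref{parschaud1}, together with Corollary~\ref{expR} giving the analogous expansion for $R(t)$ — make this routine: excise $\{r < \varepsilon\}$ around each $p_j$, integrate by parts on the complement, bound the circle integrals by $C\varepsilon$ (the $r^{1/\beta}$ and $\calO(r^2)$ terms decay, the constant term has vanishing gradient), and let $\varepsilon \to 0$. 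Everything else is the algebraic manipulation above.
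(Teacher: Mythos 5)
Your proof is correct and follows essentially the same route as the paper's: differentiate under the integral, integrate by parts to get $2\int_M \phi_t(R_0 - \Delta_0\phi)\,dA_0$, identify $R_0 - \Delta_0\phi = Re^{\phi}$ via \eqref{trsc}, and use $\int_M (R-\rho)\,dA_g = 0$. Your explicit justification of the integration by parts near the conic points (excising $\varepsilon$-neighbourhoods and using the partial expansion of $\phi$) is exactly the argument the paper indicates but leaves implicit.
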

\begin{proof} 
On smooth closed surfaces the formula is well-known \cite[Eqn.\,(49)]{Struwe}. Indeed, recall that,
using \eqref{rf} and \eqref{rf1}, 
\[
\label{eq-phi}
\del_t \phi = e^{-\phi}\, (\Delta_0 u - R_0) + \rho = \rho - R;
\]
from this we get
\begin{eqnarray*}
\frac{d\,}{dt}\calF(\phi) &=& 2\int_M (\nabla \phi \cdot\nabla \phi_t +  R_0 \phi_t) \, dA_0 = 
2\, \int_M \phi_t ( R_0 - \Delta_0 \phi)\, dA_0  \\ & = & 2\, \int_M R e^\phi \phi_t\, dA_0 =  
-2\, \int_M (R - \rho) R\, dA_g, 
\end{eqnarray*}
and the result follows since $\int (R - \rho) \, dA_g = 0$.   Concealed here is the fact that these integrations by parts 
remain valid in this conic setting. This sort of computation will be used repeatedly in the remainder of this paper. 
The key point is that the functions involved enjoy sufficient regularity near the conic points that one may
integrate by parts on the complement of an $\e$-neighbourhood of these points and show that the boundary
term tends to $0$ with $\e$.  
\end{proof}

Troyanov proves in \cite{Tr} that the conditions \eqref{TC} ensure that there exists a constant $C$ such that
\[
\mathcal{F}(\phi(t)) \ge -C, \qquad \mbox{for all} \  t\geq 0.
\]
(In fact, Troyanov considers the stationary problem from a variational point of view and proves that $\mathcal F$ is
bounded below on $W^{1,2}$ if \eqref{TC} holds.) 

We now prove that $\phi(\cdot, t)$ is uniformly bounded in $W^{2,2}$. This too follows arguments in \cite {Tr} and \cite{Struwe}.
\begin{proposition}
\label{prop-H}
With all notation as above, if the conditions \eqref{TC} hold and $\phi$ is a solution to the flow, then 
\[
\|\phi(\cdot,t)\|_{W^{2,2}} \le C.
\]
\end{proposition}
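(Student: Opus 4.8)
The plan is to convert the bound $\calF(\phi(t)) \geq -C$ (guaranteed by \eqref{TC}, following Troyanov) together with the monotonicity formula \eqref{eq-mon-F} into an $L^2$ bound on the curvature $R$, and then bootstrap this to the claimed $W^{2,2}$ bound on $\phi$ via elliptic estimates for \eqref{trsc}. First I would observe that since $\calF(\phi(t))$ is nonincreasing (by Lemma~\ref{lem-mon}) and bounded below, it converges, and moreover
\[
\int_0^\infty \int_M (R-\rho)^2 \, dA_g \, dt < \infty.
\]
Next, the energy identity itself, $\calF(\phi(t)) = \int_M(|\nabla_0\phi|^2 + 2R_0\phi)\, dA_0$, gives a uniform bound on $\|\nabla_0 \phi(t)\|_{L^2}$ provided one can control $\int_M R_0 \phi\, dA_0$ from below; since $R_0 \in \calC^{0,\gamma}_b$ is bounded and $\phi$ has fixed conformal volume (the area $\mathcal A(t)$ is constant), a Moser--Trudinger inequality on the conic surface — which holds in the cone-angle-less-than-$2\pi$ regime, exactly where \eqref{TC} lives — controls $\int |\phi|\, dA_0$ in terms of $\|\nabla_0\phi\|_{L^2}$ and hence, after absorbing, yields $\|\phi(t)\|_{W^{1,2}} \leq C$ uniformly in $t$.

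The second half is to upgrade $W^{1,2}$ to $W^{2,2}$. Here I would use the scalar curvature transformation equation \eqref{trsc} in the form $\Delta_0 \phi = R_0 - R e^{\phi}$, together with the fact that $\Delta_0$ (the Friedrichs Laplacian of the smooth background conic metric $g_0$) is an isomorphism from $W^{2,2}\cap\{\int\cdot\, dA_0 = 0\}$ onto its range — this is the $L^2$ elliptic theory underlying \S\S3.1--3.5, i.e.\ the Friedrichs realization has discrete spectrum and $G\colon L^2 \to W^{2,2}_{\mathrm{Fried}}$ is bounded. So it suffices to bound the right-hand side $R_0 - Re^\phi$ in $L^2$. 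The term $R_0$ is bounded. For $Re^\phi = 2K_g e^\phi$: using \eqref{eq:derivu} and the uniform curvature bounds $R_{\min} \leq R(t) \leq R_{\max}$ established in the long-time existence argument (\S\ref{sec-lte}, via the maximum principles of \S\ref{max}), $R(t)$ is uniformly bounded pointwise; and $e^\phi$ is in $L^2$ uniformly because $e^\phi \in L^1$ (it is the conformal factor, $\int e^\phi \, dA_0 = \mathcal A(t) = \mathcal A(0)$) and, via Moser--Trudinger applied to the now-uniform $W^{1,2}$ bound on $\phi$, $e^{p\phi}\in L^1$ uniformly for every $p$. Hence $\|R e^\phi\|_{L^2(dA_0)} \leq C$, so $\|\Delta_0 \phi\|_{L^2} \leq C$, and applying $G$ gives $\|\phi - \bar\phi\|_{W^{2,2}} \leq C$ where $\bar\phi$ is the mean of $\phi$. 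Finally the mean $\bar\phi$ is controlled because $\int e^\phi\, dA_0$ is fixed: by Jensen, $\bar\phi \leq \log(\mathcal A(0)/\Vol(M,g_0))$, and a lower bound on $\bar\phi$ follows from the $W^{1,2}$ bound plus Moser--Trudinger (an upper bound on $\int e^{-\phi}$ would also do, but the one-sided estimates suffice for the $W^{2,2}$ norm once combined with $\|\nabla\phi\|_{L^2}$). This yields $\|\phi(t)\|_{W^{2,2}} \leq C$.

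The main obstacle I anticipate is justifying the functional-analytic inputs in the \emph{conic} setting rather than just citing the smooth case: specifically, (i) the Moser--Trudinger / Onofri-type inequality for conic surfaces with all angles less than $2\pi$ — this is genuinely where the angle restriction is used, and one should point to Troyanov's work or to the coercivity of $\calF$ on $W^{1,2}$ that Troyanov establishes (indeed the cleanest route may be to extract the $W^{1,2}$ bound directly from coercivity of $\calF$ rather than re-deriving Moser--Trudinger); and (ii) the identification of $G$ as a bounded inverse on the right weighted $L^2/W^{2,2}$ spaces and the legitimacy of the integrations by parts implicit in using \eqref{trsc} with only $\calC^{2,\gamma}_b$-regular data — but both of these are supplied by the elliptic theory reviewed in \S\S3.1--3.5 and the regularity of $R(t)$ from Corollary~\ref{expR}, so the argument should go through by assembling ingredients already in hand. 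A secondary technical point is that $R(t)$ is a priori only bounded, not $\calC^{0,\gamma}_b$, uniformly in $t$; but boundedness is all that is needed for the $L^2$ estimate on $Re^\phi$, so this causes no real difficulty.
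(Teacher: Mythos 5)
Your proposal is correct and follows essentially the same route as the paper: a uniform $W^{1,2}$ bound extracted from the monotonicity of $\calF$ together with the coercivity supplied by the Troyanov condition (the paper implements exactly the "cleanest route" you flag, via Troyanov's auxiliary functional $I$ rather than an absorption argument, which as you note would otherwise hinge delicately on the Moser--Trudinger constant), followed by an $L^2$ bound on $\Delta_0\phi$ from \eqref{trsc}, the uniform curvature bound, and the conic Moser--Trudinger--Cherrier inequality. The only minor divergence is the last step: where you invoke $L^2\to W^{2,2}$ boundedness of the Friedrichs Green operator, the paper instead performs the Bochner integration by parts $\int_M|\nabla^2\phi|^2\, dA_0=\int_M|\Delta_0\phi|^2\, dA_0-\tfrac12\int_M R|\nabla\phi|^2\, dA_0$ on $M\setminus B(\vec{p},\e)$, using Proposition \ref{Rreg} to kill the boundary terms---both devices rest on the same refined regularity near the cone points in the $\beta<1$ regime.
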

\begin{proof}
We sketch the argument and refer to \cite{Struwe} and \cite{Tr} for more details.  The starting point is the uniform lower
bound $\mathcal F(\phi(\cdot, t)) \geq -C$. We first claim that
\begin{equation}
\label{eq-unif-H} 
\|\phi(\cdot,t)\|_{W^{1,2}} \le C, \qquad \, t \geq 0. 
\end{equation}
There are three cases to consider. We only give details for the case when $\chi(M, \vec{\beta})>0$, since the cases
where $\chi(M, \vec{\beta}) \leq 0$ are similar but simpler. 
The Troyanov condition \eqref{TC} is equivalent to $0 < 2\pi\gamma := 2\pi\chi(M,\vec{\be}) < 4\pi \min_i\{\be_i\}$.  
Choose $b$ so that $\pi \gamma = \pi \chi(M, \vec{\be}) < b < 2\pi \min_i\{\be_i\}$ and set 
\[
I(\phi) := \frac{1}{2b}\int_M |\nabla\phi|^2\, dA_0 + \frac{1}{\pi\gamma}\int_M R_0 \phi\, dA_0.
\]
As in the proof of Theorem 5 in \cite{Tr} we have $I(\phi) \ge -C$ for all $\phi\in W^{1,2}$. But
\[
\frac{1}{2\pi\gamma} \mathcal{F}(\phi) = I(\phi) + \frac{1}{2}\left(\frac{1}{\pi\gamma} - 
\frac{1}{b}\right) \int_M |\nabla\phi|^2\, dA_0 > I(\phi) \ge -C.
\]
Since $\mathcal{F}(\phi)\le m$, 
\[
\int_M |\nabla\phi|^2\, dA_0 \le C,\ \ t \geq 0,
\]
and Troyanov's argument then shows that also the $L^2$ is then uniformly bounded \cite[p. 817]{Tr},
whence
$||\phi(\cdot, t)||_{W^{1,2}} \leq C$ for all $t \geq 0$. 

It is proved in \cite{Tr} that if $0 < b < 2\pi\min_i\{2+2\alpha_i\}$, then there exists a constant $C$ such that
\[
\int_M e^{b u^2}\, dA_0 \le C
\]
for all $u \in W^{1,2}$ such that $\int_M u \, dA_0 = 0$, $\int_M |\nabla u|^2\, dA_0 \le 1$. This is
the Moser--Trudinger--Cherrier inequality for surfaces with conic singularities.

We now prove that
\[
\int_M |\nabla^2\phi|^2\, dA_0 \le  C, \qquad \mbox{for all} \,\,\, t\in [0,\infty).
\]
Carrying out a standard integration by parts argument over the complement of the $\e$-balls around the conic points, we obtain
\begin{multline*}
\int_{M\setminus B(\vec{p},\e)} |\nabla^2\phi|^2\, dA_0 =  \int_{M\setminus B(\vec{p},\e)} |\Delta_0\phi|^2\, dA_0 - 
\frac 12\int_{M\setminus B(\vec{p},\e)} R|\nabla \phi|^2\, dA_0  \\
+ \int_{\del B(\vec{p},\e)} \del\nu \nabla\phi \cdot \nabla\phi \, d\sigma_0 - 
\int_{\del B(\vec{p},\e)} \Delta_0\phi\, \del_\nu \phi \, d\sigma_0.
\end{multline*}
Using Proposition \ref{Rreg} and letting $\e\to 0$ gives
\begin{equation}
\label{eq-sec-ord}
\int_M |\nabla^2\phi|^2\, dA_0 = \int_M |\Delta_0\phi|^2\, dA_0 - \frac 12\int_M R|\nabla\phi|^2\, dA_0.
\end{equation}
By \eqref{trsc}
\begin{multline*}
\int_M |\Delta_0\phi|^2\, dA_0 \le 2\left( \int_M R_0^2\, dA_0 + \int_M R^2 e^{2\phi}\, dA_0\right) \\ 
\leq C\left(1 + \int_M e^{2|\phi|}\, dA_0\right) \le C\left( 1 + \int_M e^{b^2|\phi|^2 }\, dA_0\right),
\end{multline*}
since, by Corollary \ref{CurvBoundCor} proved later, 
the scalar curvature is uniformly bounded in time, where $b$ is any real number so that 
$0 < b^2 < 2\pi\min_i\{2 + 2\alpha_i\}$
and $C$ may depend on the choice of $b$. 
Now, by \cite[Proposition 11]{Tr},  the map $\phi\mapsto e^{\phi}$
is a compact embedding of $W^{1,2}$ in $L^2$, which thus yields
\[
\int_M |\Delta_0\phi|^2\, dA_0 \le C,
\]
and hence finally, 
\[
\int_M |\nabla^2\phi|^2 \, dA_0\le C, \qquad \mbox{for all} \,\,\, t \ge 0.
\]
\end{proof}


\begin{proposition}
\label{prop-conv-TC}
Let $g(t)$ be the angle preserving solution of \eqref{rf} provided by Theorem~\ref{shorttime}. If \eqref{TC} holds, 
then $g(t)$ converges exponentially to the unique constant curvature metric in the conformal class of $g_0$ with specified conic data. 
\end{proposition}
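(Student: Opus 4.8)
The plan is to establish convergence in three stages: first extract subsequential convergence in a weak topology, then upgrade to smooth convergence away from the conic points and prove the limit is a constant curvature metric, and finally promote subsequential convergence to full exponential convergence. The starting point is the monotonicity identity \eqref{eq-mon-F} together with Troyanov's lower bound $\calF(\phi(\cdot,t))\ge -C$ and the uniform $W^{2,2}$ bound from Proposition~\ref{prop-H}. Integrating \eqref{eq-mon-F} in time gives $\int_0^\infty \int_M (R-\rho)^2\, dA_g\, dt < \infty$, so there is a sequence $t_j \to \infty$ along which $\int_M (R(t_j)-\rho)^2\, dA_{g(t_j)} \to 0$. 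Combined with the uniform bounds on $\phi$ in $W^{2,2}$ (hence in $\calC^{0,\gamma}$ by Sobolev embedding on a conic surface, and in particular uniformly bounded area densities bounded above and below), we can extract a subsequence with $\phi(t_j) \to \phi_\infty$ weakly in $W^{2,2}$ and strongly in $W^{1,2}$ and $\calC^0$; the metric $g_\infty = e^{\phi_\infty} g_0$ then satisfies $R_{g_\infty} = \rho$ weakly, i.e.\ $g_\infty$ is a weak constant curvature conic metric with the prescribed cone data.

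Next I would identify $g_\infty$ with the unique constant curvature metric of Theorem~\ref{Trexist}. The weak equation $\Delta_0 \phi_\infty = R_0 - \rho e^{\phi_\infty}$ with $\phi_\infty \in W^{2,2}$ and the Moser--Trudinger--Cherrier inequality force $R_0 - \rho e^{\phi_\infty} \in L^p$ for all $p$, hence by elliptic regularity $\phi_\infty$ lies in the Friedrichs--H\"older domain; Proposition~\ref{Rreg} then applies and shows that $g_\infty$ is a genuine conic metric with $R_{g_\infty} \equiv \rho$ and cone angles $2\pi\be_j$. By the uniqueness clause of Theorem~\ref{Trexist} (and since we stay in the conformal class of $g_0$, modding out the M\"obius ambiguity is not an issue for convergence of the conformal factor once a normalization such as $\int \phi\, dA_0$ or a fixed-point condition is imposed), $g_\infty$ is the unique such metric. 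Since every subsequential limit coincides with this fixed metric, the full family $\phi(t) \to \phi_\infty$ in $\calC^0$ and $W^{1,2}$ as $t\to\infty$. Elliptic/parabolic bootstrapping using Theorem~\ref{regthm} and the higher regularity results of \S\ref{partialSubsec} then upgrades this to convergence in $\calC^\infty$ on compact subsets of $M\setminus\vec p$ and in the polyhomogeneous topology near the cone points; in particular $R(t) \to \rho$ uniformly.

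Finally, for the exponential rate I would linearize the flow at $g_\infty$. Writing $g(t) = e^{\psi(t)} g_\infty$ with $\psi(t) \to 0$, the evolution of $\psi$ is governed to leading order by $\del_t \psi = \Delta_{g_\infty}\psi + \rho\,\psi + (\text{quadratic})$, equivalently the curvature satisfies \eqref{eq-R1}, $\del_t R = \Delta_{g(t)} R + R(R-\rho)$. On the constant curvature background the relevant linearized operator is $\Delta_{g_\infty} + \rho$ acting on functions orthogonal to the appropriate kernel; the key analytic input is a spectral gap, namely that the first nonzero eigenvalue $\lambda_1$ of $-\Delta_{g_\infty}$ on the conic surface satisfies $\lambda_1 > \rho$ — this is exactly where the Troyanov condition reenters, since it is equivalent (via the same computation bounding $\calF$ from below) to coercivity of the Hessian of $\calF$ at the critical point, hence to $\lambda_1 - \rho > 0$. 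Granting this gap, a standard differential-inequality argument for $\|R - \rho\|_{L^2}^2$ (or for $\|\psi\|_{W^{1,2}}^2$) of the form $\tfrac{d}{dt}\|\cdot\|^2 \le -2(\lambda_1 - \rho)\|\cdot\|^2 + (\text{higher order})$, combined with the already-established decay to absorb the nonlinear terms, yields $\|R(t) - \rho\| \le Ce^{-\mu t}$ for some $\mu > 0$; integrating \eqref{eq:derivu} then gives exponential convergence of $\log u$, and bootstrapping through the Schauder estimates of Propositions~\ref{parschaud1}--\ref{parschaud2} propagates the exponential rate to all the $\calC^{k,\gamma}_b$ norms. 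The main obstacle I anticipate is the spectral gap step: one must verify that the Friedrichs Laplacian of the conic constant curvature metric genuinely has $\lambda_1 > \rho$ under \eqref{TC}, which requires relating Troyanov's coercivity inequality to the second variation of $\calF$ and checking that no low-lying eigenvalue is contributed by the cone points — here the restriction $\be_j \in (0,1)$ is used to control the domain of the Friedrichs extension and to rule out extra small eigenvalues from the conic ends.
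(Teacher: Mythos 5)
Your outline is correct and is essentially the paper's argument: the paper's proof consists of exactly your first two ingredients (the monotonicity of $\calF$ and the uniform $W^{2,2}$ bound of Proposition~\ref{prop-H}, followed by the verification that the limit carries the same conic data) and then simply invokes Struwe's convergence argument ``verbatim'' for the rest; what you have written out is precisely the content of that cited argument, adapted to the conic setting. One point deserves tightening. You assert that the spectral gap $\lambda_1(-\Delta_{g_\infty})>\rho$ is ``equivalent'' to \eqref{TC} via the same computation that bounds $\calF$ from below; that is not quite right, since Troyanov's coercivity is a global Moser--Trudinger-type statement on $W^{1,2}$ and does not directly compute the second variation at the critical point. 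The clean route is Lichnerowicz--Obata: the Bochner formula (with integration by parts justified by the expansion of eigenfunctions in the Friedrichs domain, using $\beta_j<1$ so that $\nabla f=\calO(r^{1/\beta-1})$ is bounded) gives $\lambda_1\ge\rho$, with equality forcing $\nabla f$ to be a holomorphic vector field vanishing at every cone point; since \eqref{TC} with $\chi(M,\vec\beta)>0$ forces $k\ge 3$, no such field exists on $S^2$ and the inequality is strict --- this is exactly the mechanism of Lemma~\ref{lem-number-points}. (For $\chi(M,\vec\beta)\le 0$ the gap is trivial since $\rho\le 0<\lambda_1$.) With that substitution your plan goes through, and your treatment of the preservation of the cone angles via the uniform $\calC^0$ bound on the conformal factor matches the paper's Gauss--Bonnet argument.
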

\begin{proof}
We have already shown that $\phi(\cdot, t)$ exists and $\|\phi(\cdot,t)\|_{W^{2,2}} \le C$ for all $t \geq 0$.  We now invoke
the arguments of Struwe \cite{Struwe} verbatim to deduce that $g(t)$ converges exponentially to a constant curvature 
metric $g_\infty$ in the conformal class of $g_0$. 

It remains to show that $g_\infty$ has the same conic data $\{\vec{p}, \vec{\beta}\}$ as $g_0$.   The $W^{2,2}$ bound and
the Sobolev embedding theorem give a uniform $\calC^0$ bound $|\phi(\cdot, t)| \leq C$. 
This implies that the conic points do not merge in the limit. Indeed, if $i\neq j$ and $\gamma_{ij}^t$ is the geodesic for $g(t)$ joining
these two conic points, then 
\[
\dist_{g(t)} (p_i,p_j) = \int_{\gamma_{ij}^t} e^{\frac{\phi}{2}} \ge \bar{c}\, \int_{\gamma_{ij}^t} \ge \bar{c}\, \dist_{g(0)}(p_i,p_j). 
\]
Next, suppose that $g_\infty$ has cone angle parameter $\tilde{\be}_i$ at $p_i$. Thus in local conformal coordinates
\[
g_0 = e^{2\phi_0} |z|^{2\beta_i-2} |dz|^2, \quad \mbox{and}\quad g_\infty = e^{2\phi_\infty} |z|^{2\tilde{\beta}_i-2} |dz|^2,
\]
so by the uniform $\calC^0$ bound it is clear that $\tilde{\be}_i \ge \beta_i$ for all $i$.  Since 
\[
\chi(M) + \sum \be_i = \chi(M) + \sum \tilde{\be}_i
\]
we see that $\be_i = \tilde{\be}_i$ for all $i$. 
\end{proof}

\section{Convergence in the non Troyanov case}
\label{NonTroySec}
In this final section we consider the case where the Troyanov condition \eqref{TC} fails.  As remarked earlier, the angle inequality
fails at just one of the points $p_j$, say $p_1$, and necessarily $M = S^2$. Then $(M,J, \vec{p}, \vec{\beta})$ does not admit a 
constant curvature metric, and hence even if $g(\cdot,t)$ converges, its limit must either not be constant curvature or else some of the
conic data is destroyed in the limit. More precisely, the limit might be a surface with fewer conic points and different
cone angles, and hence might conceivably still admit a constant curvature metric. The existence of non constant curvature, 
soliton metrics with one or two conic points (the teardrop or American football) on $S^2$ can be ascertained using 
ODE arguments, \cite{Yin}, and these are the reasonable candidates for limiting metrics in the non Troyanov case. 
To this end, we first show that every compact two-dimensional shrinking Ricci soliton which does not have constant
curvature has at most two conic points. Furthermore, if \eqref{TC} holds, then any shrinking Ricci soliton
must have constant curvature. The next lemma also appears in \cite{Ra2}. 
\begin{lemma}
\label{lem-number-points}
If $g$ is a shrinking Ricci soliton metric on $M$, with conic data $(\vec{p}, \, \vec{\beta})$, and there are at least
three conic points, then $g$ has constant curvature.
\end{lemma}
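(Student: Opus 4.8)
The plan is to encode the soliton structure by a holomorphic vector field on the underlying Riemann surface and then invoke the elementary fact that a holomorphic vector field on a compact Riemann surface cannot vanish at three distinct points unless it vanishes identically.

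First I would extract the soliton vector field. Because the normalized flow \eqref{rf} preserves the area, a soliton metric $g$ must evolve purely by pullback, $g(t)=\vp_t^\ast g$ with $\vp_0=\Id$; differentiating at $t=0$ and comparing with \eqref{rf} gives, on $M\setminus\vec p$,
\[
\mathcal L_X g=(\rho-R)\,g,\qquad X:=\del_t\vp_t\big|_{t=0},
\]
and taking the $g$-trace yields $\operatorname{div}_g X=\rho-R$. Thus $X$ is a conformal vector field on $M\setminus\vec p$, which in two dimensions means precisely that $X$ is the real part of a holomorphic vector field with respect to the complex structure $J$ (which, as recalled in \S\ref{PrelimSec}, extends smoothly across each $p_j$).

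Next I would show that $X$ extends holomorphically across each conic point and vanishes there. Since the soliton metric is polyhomogeneous with $R\in\calC^{0,\delta}_b$ (cf.\ Proposition~\ref{Rreg}), the function $\operatorname{div}_g X=\rho-R$ is bounded near each $p_j$. Working in a local holomorphic coordinate with $p_j=\{z=0\}$, write $X=h(z)\del_z+\overline{h(z)}\del_{\bar z}$ with $h$ holomorphic on the punctured disk, and $g=\gamma\,|dz|^2$ with $\gamma\sim|z|^{2\beta_j-2}$ (see \S\ref{PrelimSec}); a direct computation gives $\operatorname{div}_g X=\tfrac{2}{\gamma}\Re\,\del_z(\gamma h)$, and feeding the Laurent expansion of $h$ into this expression shows that boundedness as $z\to0$ forces every coefficient of $z^n$ with $n\le 0$ to vanish. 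Hence $h$ extends holomorphically to the origin with $h(0)=0$. (Alternatively, if one uses that a compact surface soliton is of gradient type, take $X=\nabla f$ with $\Delta_g f=\rho-R\in\calC^{0,\delta}_b$ and apply Proposition~\ref{mapb}/\ref{lte}: then $f=a_0+r^{1/\beta_j}(a_{11}\cos y+a_{12}\sin y)+\calO(r^2)$, so $\nabla f$ is bounded and tends to $0$ at $p_j$ since $\beta_j<1$.) Either way, $X$ extends to a holomorphic vector field on all of $M$ with $X(p_j)=0$ for every $j$; consistently, the complete flow it generates on $M$ restricts to $\vp_t$ on $M\setminus\vec p$ and, since the flow under consideration keeps the conic points at $\vec p$, permutes $\{p_1,\dots,p_k\}$ and therefore fixes each one.

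Finally I would conclude by a degree count: a holomorphic vector field $X\not\equiv 0$ on a compact Riemann surface is a nowhere-identically-zero holomorphic section of the holomorphic tangent bundle, so its zeros, counted with multiplicity, number $\chi(M)\le 2$; in particular there are at most two distinct ones. Since $X$ vanishes at the three distinct points $p_1,p_2,p_3$, it follows that $X\equiv 0$, hence $R=\rho-\operatorname{div}_g X\equiv\rho$ is constant, and as $K_g=\tfrac12 R$ in two dimensions, $g$ has constant curvature. The one genuinely delicate step is the extension across the conic points in the previous paragraph: controlling the soliton vector field near $\vec p$ is exactly where the refined regularity of \S\ref{LinSec} (boundedness of $R$ and, through Proposition~\ref{mapb}, the $r^{1/\beta}$-type expansion) is used; the remaining steps are soft.
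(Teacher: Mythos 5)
Your proof is correct and its core is the same as the paper's: produce a holomorphic vector field that vanishes at every conic point, and invoke the fact that a nontrivial holomorphic vector field on a compact Riemann surface has at most two zeros. The differences are in how the vector field is produced and extended. The paper starts directly from the gradient (K\"ahler--Ricci) soliton form $(R-1)g_{i\bar j}=\nabla_i\nabla_{\bar j}f$, takes $X=\nabla^{1,0}f$, and gets the vanishing at the $p_i$ from the elliptic regularity of $\Delta f=R-1$ (the expansion $f=a_0+r^{1/\beta}(\cdots)+\calO(r^2)$ of Proposition~\ref{mapb}, giving $\nabla f=\calO(r^{1/\beta-1})\to 0$ since $\beta<1$); it also records an $L^2$ integration by parts for $|X|^2$. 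Your primary route instead works with a general soliton vector field, deduces holomorphy from conformality of $\mathcal L_Xg$, and handles the extension across $p_j$ by a Laurent-coefficient argument from boundedness of $\operatorname{div}_gX=\rho-R$. That buys you two things: you never need to know that a compact conic soliton is of gradient type, and your degree count handles arbitrary genus rather than specializing to $S^2$. Two small points of care in your Laurent step: (i) a priori $h$ could have an essential singularity, so you should first rule that out (e.g.\ boundedness of $\Re\bigl[(\beta-1)h/z+h'\bigr]$ forces this expression, and hence $h$, to be meromorphic by Casorati--Weierstrass, after which the coefficient argument applies since $n+\beta-1\neq0$ for integer $n$); and (ii) for a general conic factor $\gamma=e^{2\phi}|z|^{2\beta-2}$ the divergence picks up the extra term $2\Re(2h\,\del_z\phi)$, which is harmless for a polyhomogeneous soliton metric but should be mentioned. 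Your parenthetical fallback via the gradient potential and Proposition~\ref{mapb} is exactly the paper's argument and sidesteps both issues.
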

\begin{proof}
View $g$ as a K\"ahler--Ricci soliton, then
\[
(R - 1) g_{i\bar{j}} = \nabla_i\nabla_{\bar{j}} f
\]
where the vector field $X^i := \nabla^i f$ is a holomorphic vector field on $S^2\backslash \vec{p}$. The trace 
of the soliton equation gives $\Delta f = R - 1$, and hence using the static case of Theorem 
\ref{regularity},
see also \cite[Propositions 3.3, 3.8]{JMR}, it follows that $\nabla f=\calO(r^{\frac1\be-1})$, and hence must vanish at 
each of the points $p_i$.  This may also be deduced as in \cite[Lemma 3]{LT}. Using this same regularity, we
can integrate by parts to get 
\[
\int_{S^2 \setminus \vec{p} } |X|^2 \, dA =  \int_{S^2 \setminus \vec{p}} |\nabla f|^2\, dA = 
\int_{S^2\setminus \vec{p} } (1-R)f \, dA < \infty.
\]
However, there is no nontrivial holomorphic vector field on $S^2$ which vanishes at more than two points, 
so $X$ and hence $\nabla_{\bar{j}}\nabla_i f = 0$, and finally, using the soliton equation again, $R \equiv 1$.
\end{proof}

\begin{lemma}
\label{lem-sol-ein}
If $(M,J,\vec{\be}, \vec{p})$ satisfies \eqref{TC}, and $g$ is a shrinking Ricci soliton metric, then $g$ has constant curvature, i.e.\ $f = const$. 
\end{lemma}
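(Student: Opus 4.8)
The plan is to read the statement off from Lemma~\ref{lem-number-points} together with a short count of the conic points, the Troyanov inequalities \eqref{TC} being needed only to eliminate the cases of one or two cone points.

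First I would record that a shrinking soliton can only live on $S^2$. Tracing the soliton equation as in the proof of Lemma~\ref{lem-number-points} gives $\Delta_g f = R_g - 1$; integrating over $M$ and using the regularity $\nabla f = \calO(r^{1/\be - 1})$ established there (which, since $\be < 1$, makes the boundary integrals over $\del B_\e(p_i)$ tend to $0$ as $\e \to 0$, so that $\int_M \Delta_g f\, dA_g = 0$) yields $\int_M R_g\, dA_g = \Area(M,g) > 0$. By the Gau{\ss}--Bonnet formula \eqref{GB} this forces $\chi(M,\vec\be) > 0$, and then \eqref{cec} together with \eqref{coneangle2pi} force $M = S^2$.

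Next I would split on the number $k$ of conic points. If $k \ge 3$, then Lemma~\ref{lem-number-points} already gives that $g$ has constant curvature; then $\nabla_i \nabla_{\bar j} f \equiv 0$ and the soliton equation shows $f$ is constant, as claimed. If instead $k \in \{1,2\}$, I would observe that the Troyanov inequalities \eqref{TC} cannot hold: for $k = 1$ they read $\alpha_1 > 0$, and for $k = 2$ they read $\alpha_1 > \alpha_2$ and $\alpha_2 > \alpha_1$; both are impossible since every $\alpha_i = \be_i - 1$ lies in $(-1,0)$ by \eqref{coneangle2pi}. Hence, under the standing hypothesis that \eqref{TC} holds, the only remaining possibility is $k = 0$, i.e.\ the smooth two-sphere, where the statement that every shrinking soliton has constant curvature is Hamilton's classical result \cite{Ha}; in particular $f$ is constant.

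This argument is essentially a corollary of Lemma~\ref{lem-number-points}: the whole force of the Troyanov hypothesis is to rule out $k = 1$ and $k = 2$. The only step that is not purely formal is the justification of the integrations by parts near the conic points in the reduction to $S^2$, but this is exactly the regularity input $\nabla f = \calO(r^{1/\be - 1})$ furnished by Theorem~\ref{Trexist} (equivalently Proposition~\ref{mapb}) together with $\be < 1$, so I do not anticipate any real obstacle.
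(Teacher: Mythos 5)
Your proof is correct, but it takes a genuinely different route from the paper's. The paper never reduces to Lemma \ref{lem-number-points}: instead it invokes the existence half of Theorem \ref{Trexist} to produce the constant curvature metric $\bar g$ compatible with the given conic data, writes $g = e^{\phi}\bar g$, manipulates the soliton and flow equations into the divergence identity $\operatorname{div}(e^{\phi}\nabla f) = 0$, and then multiplies by $f$ and integrates by parts on $M\setminus B_\e(\vec p)$ --- the boundary terms vanishing thanks to the expansion of $f$ furnished by the regularity theory --- to conclude $\int_M |\nabla f|^2 e^{\phi}\, dA = 0$, hence $f = \mathrm{const}$, $R\equiv 1$, and, by the uniqueness theorem of \cite{LT}, $g = \bar g$. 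Your argument instead exposes the hypothesis \eqref{TC} as purely combinatorial: it serves only to exclude $k=1$ and $k=2$, i.e.\ exactly the teardrop and football configurations where non-trivial conic solitons do exist, while $k\ge 3$ is the holomorphic-vector-field rigidity already proved in Lemma \ref{lem-number-points} and $k=0$ is Hamilton's classical theorem \cite{Ha}. This is a clean and rather illuminating reduction that avoids the existence direction of Troyanov's theorem; the paper's energy argument, on the other hand, needs no case analysis in $k$, does not appeal to the smooth case separately, and delivers the stronger conclusion $g=\bar g$ in one stroke. One small point to tighten: in the case $k\ge 3$, the cleanest way to obtain $f=\mathrm{const}$ (rather than merely $R\equiv 1$) is to observe that the proof of Lemma \ref{lem-number-points} in fact shows $X=\nabla f\equiv 0$; alternatively, trace $\nabla_i\nabla_{\bar j}f\equiv 0$ to get $\Delta_g f=0$ and use that a harmonic function in the Friedrichs domain of a compact conic surface is constant.
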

\begin{proof}
The argument carries over from the smooth setting, by virtue of Theorem \ref{regularity}. We already know that there exists a constant curvature metric $\bar{g}$ with this prescribed data. By rescaling, assume 
$R_{\bar{g}} = 1$. Write $g = e^{\varphi} \bar{g}$. Since $g$ is a shrinking soliton, it moves under Ricci flow by 
a $1$-parameter family of diffeomorphisms $\psi(t)$, so $g(t) = \psi(t)^* g$. Hence $\varphi(\cdot,t) = \psi^* \varphi$ solves
\[
\del_t \varphi = \langle \nabla f, \nabla \varphi \rangle_{g} = e^{-\varphi}(\Delta_{\bar{g}} \varphi - R_{\bar{g}}) + 1.
\]
However, $R_{g} = e^{-\varphi} (1 - \Delta_{\bar{g}} \varphi)$ and $R_{\bar{g}} = 1$, so $\langle \nabla f, \nabla \varphi \rangle_{g}= - R_{g} + 1$, which implies that
\[
\langle \nabla f, \nabla \phi \rangle_{g} = R_{g} - 1 = -\Delta_{g} f,
\]
or equivalently, $\mbox{div} (e^\varphi \nabla f) = 0$.  Multiplying by $f$ and integrating by parts on $M \setminus B_\e(\vec{p})$ gives
\[
\int_{M \setminus B_\e(\vec{p})} \, |\nabla f|^2 e^{\varphi}\, dA_{g} =  \int_{\del B_{\e}(\vec{p})}  f\del_\nu f e^\varphi \, d\sigma,
\]
and this converges to $0$ as $\e \to 0$. Hence $\int_M |\nabla f|^2\, dA_g = 0$, so $f = \mbox{const.}$ 
Thus $R_g \equiv 1$ and by the uniqueness of constant curvature metrics with given conic data \cite{LT}, $g = \bar{g}$.
\end{proof}

Our goal in the remainder of this section is to prove: 
\begin{proposition}
\label{prop-non-troy}
Let $g(t)$ be the angle preserving flow on $(M,J,\vec{p}, \vec{\be})$, and assume that \eqref{TC} fails. 
Define  $\psi(t)$ to be the $t$-dependent diffeomorphism generated by the vector field $\nabla f(t)$, 
where $\Delta f(t)=R_{g(t)}-\rho$. Then $\hat g(t):=\psi^*g(t)$ satisfies $\partial \hat g(t)/\partial t = 2\hat \mu(t)$ 
where $\hat \mu$ is the tensor defined by \eqref{muDefEq} with respect to the metric $\hat g(t)$. We prove that 
\[
\lim_{t\ra\infty}\int_M|\hat\mu(t)|^2_{\hat g(t)}d\hat A= \lim_{t \to \infty} \int_M|\mu(t)|^2_{g(t)}dA=0
\]
and moreover, 
\[
\lim_{t\ra\infty}\int_M|X(t)|^2_{g(t)}dA=0,
\]
where $X = \nabla R + R\nabla f$. 
\end{proposition}

In the next subsections we assemble various facts which lead to the proof of this Proposition. These were all initially developed 
in the smooth case, and the main work here consists mainly in verifying that they remain true in this conic setting. 

The outline of this proof is as follows: in \S \ref{sec-collapsing} we adapt Perelman's arguments for volume noncollapsing for the
K\"ahler--Ricci flow, see \cite{ST}. We then follow the arguments in \cite{Ha}, making use of the entropy functional $N(g) = 
\int_M R\log R\, dV_g$, and showing that $N(g(t)) \leq C$ here too. In 
\S \ref{sec-harnack} we explain how to apply the
maximum principle in the proof of the Harnack inequality, and hence obtain that $R_{\sup} \leq C R_{\inf}$. Area noncollapsing
entropy monotonicity and the Harnack estimate then show that $ R \leq C$ for all $t\in [0,\infty)$. We also show 
$R \ge c > 0$ for $t \ge t_0$. 

\subsection{Area noncollapsing via Perelman's monotonicity formula}
\label{sec-collapsing}
Our first goal is to prove an estimate on the area of small geodesic balls.
\begin{lemma}
\label{prop-nocollaps}
Let $(M, g(t))$ be a compact conic surface evolving by the angle-preserving area-normalized Ricci flow.  
Define $R_{\max}(t) = \sup_{q \in M} R_{g(t)}$. Then there exists $C > 0$ so that for all $p\in M$ and $t > 0$, we have
\[
\vol_{g(t)} B(p, R_{\max}(t)^{-1/2}) \ge \frac{C}{R_{\max}(t)}.
\]
\end{lemma}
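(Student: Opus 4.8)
The strategy is to adapt Perelman's no--local--collapsing theorem to the conic setting, following the treatment of the normalized K\"ahler--Ricci flow in \cite{ST}. Fix $t_0 > 0$ and $p \in M$ and set $r = R_{\max}(t_0)^{-1/2}$; since $\rho$ is the average of $R$ one has $R_{\max}(t_0) \ge \rho$, so $r$ is bounded above. We wish to show $\vol_{g(t_0)} B(p,r) \ge C r^2$. Observe first that on $B(p,r)$ the Gauss curvature satisfies $|K_{g(t_0)}| \le r^{-2}$: indeed $R_{g(t_0)} \le R_{\max}(t_0) = r^{-2}$, while the maximum principle argument following \eqref{eq-R1} --- legitimate in this setting because $R(t)$ satisfies the hypotheses of Lemma~\ref{lem-max2} by Corollary~\ref{expR} --- bounds $R_{\min}(t) \ge -C_1$ uniformly in $t$, so that $|K| = |R|/2 \le \tfrac12 \max(r^{-2}, C_1) \le r^{-2}$ provided $r$ is small enough; when $r$ stays bounded below one instead applies the non--collapsing below at a fixed smaller scale $r'$, at which the curvature bound does hold, and uses $B(p,r) \supseteq B(p,r')$ together with $r \le \rho^{-1/2}$.

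The main tool is Perelman's $\mathcal{W}$--entropy, defined on the conic surface by
\[
\mathcal{W}(g,f,\tau) = \int_M \big[\tau(R + |\nabla f|^2) + f - 2\big]\,(4\pi\tau)^{-1} e^{-f}\, dA_g, \qquad \int_M (4\pi\tau)^{-1} e^{-f}\, dA_g = 1,
\]
with $\mu(g,\tau) = \inf_f \mathcal{W}(g,f,\tau)$, the infimum taken over $f$ smooth on $M \setminus \vec p$ with the appropriate decay at the conic points. Two facts are needed. First, the relevant entropy is bounded below uniformly along the flow: this is where the normalization $\rho > 0$ enters, as in \cite{ST}, and ultimately rests on a Euclidean--type logarithmic Sobolev inequality for $(M, g(t))$, which holds uniformly because the area is the constant $\mathcal{A}(0)$, the curvature is two--sidedly bounded, and the cone points are modelled on the non--collapsed metric cones $dr^2 + \be_i^2 r^2\, dy^2$. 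Second, the monotonicity of the entropy along the flow. Granting these, one runs Perelman's argument: assuming for contradiction that $\vol_{g(t_0)} B(p,r) = \eta r^2$ with $\eta$ small, insert the standard logarithmic--cutoff test function $f$ supported in $B(p,r)$ and normalized to meet the constraint; the curvature bound $|K| \le r^{-2}$ on the ball yields $\mathcal{W}(g(t_0), f, r^2) \le \Psi(\eta)$ with $\Psi(\eta) \to -\infty$ as $\eta \to 0$, and the monotonicity together with the uniform lower bound forces $\eta \ge \eta_0$ for a fixed $\eta_0 > 0$ --- which is the assertion with $C = \eta_0$.

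The crux, and the only genuinely new point relative to the smooth case, is the entropy monotonicity (together with the finiteness of $\mathcal{W}$ and of its time derivative) on a space with conic points. Perelman's computation is a pointwise identity which is integrated against the conjugate heat kernel and then integrated by parts; on the conic surface each such integration by parts must instead be performed on the complement of $B_\e(\vec p)$, with the boundary terms over $\partial B_\e(\vec p)$ shown to tend to $0$ as $\e \to 0$ --- precisely the mechanism already exploited in Lemma~\ref{lem-mon}, Proposition~\ref{prop-H} and Lemma~\ref{lem-sol-ein}. This is permissible because $R(t)$ is polyhomogeneous with leading behaviour $b_0(t) + \mathcal{O}(r^{1/\be})$ by Corollary~\ref{expR}, while the $\mathcal{W}$--minimizer solves a semilinear elliptic equation with $\calC^{0,\gamma}_b$ coefficients and hence, by Proposition~\ref{mapb}, lies in the Friedrichs--H\"older domain with an expansion $a_0 + \mathcal{O}(r^{1/\be})$; since each $1/\be_i > 1$, these expansions provide exactly the decay needed to annihilate the boundary integrals. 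A secondary bookkeeping point is that, to keep the lower bound for the entropy uniform as $t_0 \to \infty$, one should use Perelman's functional modified for the normalized flow (as in \cite{ST}) rather than $\tau \mapsto \mu(g(0),\tau)$, whose argument would otherwise grow with $t_0$; equivalently, when $\chi(M,\vec\be) > 0$ one may pass to the unnormalized flow, which reaches its finite extinction time $T^\ast = \mathcal{A}(0)/(4\pi\chi(M,\vec\be))$ and on which Perelman's finite--time non--collapsing produces a $\kappa$ independent of time.
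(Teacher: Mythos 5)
Your proposal follows essentially the same route as the paper: establish monotonicity of Perelman's $\mathcal{W}$-entropy in the conic setting by performing the integrations by parts on $M\setminus B_\e(\vec p)$ and killing the boundary terms via the $a_0+\mathcal{O}(r^{1/\beta})$ expansions of $f$ and $R$, then run the standard noncollapsing argument of \cite{ST}. The paper defers the contradiction/test-function step entirely to \cite{ST}, whereas you spell it out (and additionally justify the minimizer's regularity via Proposition~\ref{mapb}), but the underlying argument is the same and correct.
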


\begin{proof}
The proof relies on monotonicity properties with respect to the unnormalized Ricci flow of Perelman's $\calW$ functional, 
\[
\calW (g,f,\tau) = (4\pi\tau)^{-1}\, \int_M [\,\tau (\, |\nabla f|^2 + R\, ) + f - 2\,] e^{-f}\, dA_g,
\]
where $g$ is a metric, $f$ is a function and $\tau \in \RR^+$.  For us, $g$ is a polyhomogeneous
conic metric and $f(z,t) = a_0(t) + (a_{11}(t) \cos y + a_{12}(t)\sin y) r^{1/\beta} + r^2\tilde{f}(z,t)$ where both $f$ and $\tilde{f}$
lie in $\calC^1([0,\infty); \calC^{2,\delta}_b)$.  This integral is convergent since $|\nabla f|$ is bounded. 

We review the proof of monotonicity of this functional to check that the singularities of $g$ and $f$ do not
cause difficulties.  We restrict to the space of triples $(g,f,\tau)$ such that the measure $(4\pi \tau)^{-1/2} e^{-f} dA_g$ 
is fixed. If $(v, h, \sigma)$ is a tangent vector to this space, then by \cite[Propositions 5.3 and 12.1]{KL}, 
\[
\left. \delta\mathcal{W}\right|_{(g,f,\tau)} (v ,h,\sigma) = (4\pi \tau)^{-1} \int_M \Big[\sigma(\, R_g + |\nabla f|^2\,) - \tau 
\langle v, \Ric_g + \nabla^2 f \rangle + h \Big] \, e^{-f} dA_g
\]
This requires justifying the three integrations by parts
\[
\begin{split}
\int_M e^{-f} (-\Delta \tr_g v) \, dA & = -\int_M \Delta(e^{-f}) \tr_gv\, dA, \\
\int_M e^{-f} \delta^* \delta^* v \, dA &= \int_M \langle \nabla^2 e^{-f}, v \rangle\, dA, \\
\int_M e^{-f}\langle \nabla f, \nabla h \rangle \, dA & = \int_M \Delta e^{-f} h\, dA,
\end{split}
\]
which we do in the usual way, using the expansion for $f$. 

Still following \cite[\S 12]{KL}, set $v = -2( \Ric_g + \nabla^2 f)$, so $\tr_g v = - 2(R_g + \Delta f)$,
and also $h = -\Delta f + |\nabla f|^2 - R_g + \frac{1}{2\tau}$, $\sigma = -1$. Then 
\[
\left. \delta\calW\right|_{(g,f,\tau)} (v ,h,\sigma) = \int_M \tau \left| \Ric_g + \nabla^2 f - \frac{1}{2\tau} g\right|^2
(4\pi \tau)^{-1} e^{-f} \, dA_g \geq 0. 
\]
To recover the actual Ricci flow we add to $v$ and $h$ the Lie derivative terms $\calL_V g$ and $\calL_V f = Vf$, 
respectively, where $V = \nabla f$. This new infinitesimal variation corresponds to the flow
\[
\del_t g = -2\Ric_g, \ \del_t f =-\Delta f + |\nabla f|^2 - R_g + \frac{1}{\tau}, \ \mbox{and}\ \ \del_t \tau = -1,
\]
along which we have
\[
\left. \delta\mathcal{W}\right|_{(g,f,\tau)} (v ,h,\sigma) = 
\int_M 2\tau | \nabla^2 f + \frac 12 ( R_g - 1/\tau) g_{ij} |^2 (4\pi\tau)^{-1/2} e^{-f}\, dA_g. 
\]
Finally, define
\[
\mu(g,\tau) := \inf_f \left\{ \calW (g,f,\tau):  (4\pi\tau)^{-1/2} \int_M e^{-f}\, dA_g = 1 \right\}.
\]
We have proved that $\mu(g(t), \tau(t))$ increases along the Ricci flow. Using this monotonicity,
we follow precisely the same arguments as in Perelman's proof of volume noncollapsing for the K\"ahler--Ricci flow 
(see \cite{ST} for details). 
\end{proof}

\subsection{Entropy estimate}
\label{sec-entropy}
The potential function $f$ satisfies $\Delta f = R - \rho$. Define the symmetric, trace-free $2$-tensor 
 \begin{equation}
\label{muDefEq}
\mu = \nabla ^2 f - \frac 12\Delta f\, g
\end{equation}
and the vector field $X = \nabla R + R\nabla f$. As in the earlier part
of this section, $g$ is a gradient Ricci solitons if $\mu \equiv 0$; in fact, one also has $X \equiv 0$ on any soliton. 
The entropy function introduced by Hamilton \cite{Ha} in case $R_{g_0}$ is a strictly positive function on $M$
is the quantity 
\[
N(t) = \int_M R \log R\, dA. 
\]
When $R$ changes sign, Chow \cite{Ch1} considered the modified entropy 
\begin{equation}
\label{eq-ent2}
N(t) = \int_M\, (R - s)\log (R - s)\, dA,
\end{equation}
where $s'(t) = s(s - r)$ with $s(0) < \min_{x\in M} R(x,0)$. In either case, if $M$ is smooth, these authors showed
that $N(t) \leq C$ for $t \geq 0$; in the first case this is based on the monotonicity of $N$, which follows from the formula
\begin{equation}
\label{eq-ent1}
\frac{dN}{dt} = -\int\, (2 |\mu|^2 + |X|^2/R)\, dA. 
\end{equation}
We now prove that this entropy function, or its modified form, is still bounded above even in the conic setting.
\begin{lemma}
\label{lem-entropy-est}
If $g(t)$ is an angle-preserving solution of the normalized Ricci flow, and if the entropy $N$ is defined by 
\eqref{eq-ent1} if $R > 0$ everywhere and by \eqref{eq-ent2} if $R$ changes signs, then $N(t) \leq C$ for all $t < \infty$. 
\end{lemma}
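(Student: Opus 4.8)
The plan is to transcribe Hamilton's computation \cite{Ha} (when $R_{g(0)}>0$) and Chow's modification \cite{Ch1} (when $R_{g(0)}$ changes sign), the only genuinely new point being to verify that the integrations by parts underlying the entropy identities remain legitimate near the conic points. The relevant inputs are Corollary~\ref{expR}, which gives for every $t>0$ the polyhomogeneous expansion
\[
R(t)\sim b_0(t)+r^{1/\beta}(b_{11}(t)\cos y+b_{12}(t)\sin y)+\calO(r^2),
\]
together with boundedness of $\Delta_0 R$, and Proposition~\ref{lte}, which produces the potential $f$ of \eqref{fEq} with $f\sim a_0+r^{1/\beta}(a_{11}\cos y+a_{12}\sin y)+\tilde u$, $\tilde u\in r^2\calC^{2,\delta}_b$, and in particular $|\nabla f|\le C$. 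From these expansions, near each $p_i$ the quantities $R$, $\nabla R$, $f$, $\nabla f$ and $\nabla^2 f$ are each $\calO(r^\kappa)$ for some $\kappa>-1$ (the worst being $\nabla^2 f=\calO(r^{1/\beta-2})$, so that $\nabla^2 f\cdot\nabla f=\calO(r^{2/\beta-3})$, and both $1/\beta-2>-1$ and $2/\beta-3>-1$ precisely because $\beta<1$).

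First I would treat the case $R_{g(0)}>0$. Since $R$ solves \eqref{eq-R1} and satisfies the gradient-decay hypothesis of Lemma~\ref{lem-max2} (as $1/\beta>1$ makes $r^\sigma|\nabla R|$ bounded for every $\sigma\in(0,1)$), that lemma gives $\tfrac{d}{dt}R_{\min}\ge R_{\min}(R_{\min}-\rho)$, hence, comparing with the ODE $r'=r(r-\rho)$, $r(0)=R_{\min}(0)>0$, we get $R(t)>0$ for all $t$, so $\log R$ is well defined and, for each fixed $t>0$, bounded near the $p_i$ (there $R(t,\cdot)\to b_0(t)>0$). Differentiating $N(t)=\int_M R\log R\,dA$, using \eqref{eq-R1} and \eqref{eq:evolaf}, and integrating by parts over $M\setminus B(\vec p,\e)$ (including a Bochner-type identity to bring in $\nabla^2 f$), the boundary integrals over $\partial B(\vec p,\e)$ have integrands that are bounded functions times $\partial_\nu R$, $\partial_\nu f$, or contractions of $\nabla^2 f$ with $\nabla f$, hence are $\calO(\e^\kappa)$ with $\kappa>-1$; since $\operatorname{length}(\partial B(\vec p,\e))=\calO(\e)$, every such term is $\calO(\e^{\kappa+1})\to0$. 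Letting $\e\to0$ yields \eqref{eq-ent1} globally; its right side is $\le0$, so $N$ is nonincreasing and $N(t)\le C$ (take $C=N(t_0)$ for a fixed $t_0>0$, and use that $N$ is finite and continuous on $(0,\infty)$ by Corollary~\ref{expR} to cover $(0,t_0]$).

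For the sign-changing case I would repeat the argument with Chow's modified entropy \eqref{eq-ent2}, $N(t)=\int_M(R-s)\log(R-s)\,dA$, where $s'=s(s-\rho)$ and $s(0)<\min_M R(\cdot,0)$. Applying Lemma~\ref{lem-max2} to $w=R-s$, which satisfies $\del_t w=\Delta w+w^2+(2s-\rho)w$ and has the same gradient decay as $R$, shows $R-s>0$ is preserved, so $\log(R-s)$ is admissible near the cone points; the identical boundary-term analysis then legitimizes Chow's differential inequality for $N(t)$, and following \cite{Ch1} verbatim gives $N(t)\le C$ for all $t<\infty$. The main obstacle is exactly this boundary-term bookkeeping: one must check that in each of the several integrations by parts the normal-derivative boundary contributions decay, which in every case reduces to the numerology $1/\beta>1$; the remainder is a line-by-line copy of \cite{Ha,Ch1}.
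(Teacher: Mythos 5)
Your proposal is correct and follows essentially the same route as the paper: reduce to Hamilton's and Chow's entropy arguments, with the only new content being the justification of the integrations by parts near the conic points via the expansions of $R$ (Corollary~\ref{expR}) and of the potential $f$ (Proposition~\ref{lte}), showing the $\partial B(\vec p,\e)$ boundary terms vanish as $\e\to 0$ because $\beta<1$. The paper simply lists the needed Green's identities and defers to Chow's orbifold proof, whereas you spell out the decay exponents and the preservation of $R>0$ (resp.\ $R-s>0$) explicitly, but the substance is the same.
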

\begin{proof}
The argument proceeds exactly in the smooth case once we show that the various integrations by parts are justified. 
We assume that $R$ does change signs, since the two cases are very similar, and follow Chow's proof \cite{Ch2} on 
orbifolds. 

Define $L = \log(R - s)$. The proof relies on the following identities: 
\begin{alignat*}{2}
\int\, \Delta L (R - s)  & = -\int\, |\nabla R|^2/(R-s),  & \qquad  \int\, L \Delta R & =  -\int \, |\nabla R|^2/(R-s), \\
\int (\Delta f)^2 & =  -\int \langle \nabla f, \nabla \Delta f \rangle, & \qquad \int \langle \nabla f, \Delta\nabla f \rangle 
& =  - \int |D^2 f|^2, \\
\int f\Delta f & =  -\int |\nabla f|^2, & \qquad \int \langle \nabla R, \nabla f\rangle & =  -\int R\Delta f = -\int  R (R-r), \\
\int \langle \nabla L, \nabla f\rangle & =  -\int L \Delta f = -\int L\, (R - r);  & \qquad & 
\end{alignat*}
these are all proven using Green's identity on $M \setminus B(\vec{p},\e)$ and taking advantage of the expansions of $f$ 
and $R$ to show that the boundary terms vanish in the limit $\e \to 0$.
\end{proof}

\subsection{Harnack estimate and curvature bound}
\label{sec-harnack}
The proof of the Harnack estimate for $R$, when $R > 0$ everywhere, or for $R-s$ if $R$ changes sign, again proceeds 
exactly as in the smooth \cite{Ch1} and orbifold \cite{CWu} cases, although now using the maximum principles from 
Lemmas \ref{lem-max1} and \ref{lem-max2}.  We outline the main step.  Consider $P = Q + sL$, where 
$$
Q = \del_t L - |\nabla L|^2 - s = \Delta L + R - \rho, \ \ \  \mbox{and} \ \ L = \log (R - s).
$$ 
One computes that 
\begin{equation}
\label{eq-P}
\del_t P \ge \Delta P + 2 \nabla L \cdot \nabla P + \frac 12 (P^2 - C^2),
\end{equation}
where $C$ is a constant chosen so that $L \ge -C - Ct$. By Corollary \ref{expR}, $R$ is polyhomogeneous (for $t > 0$)
and the only terms in its expansion less than $r^2$ are $r^0$ and $r^{1/\beta}$. Using \eqref{eq-R1}, the initial
terms in the expansion of $\Delta R$ have the same exponents. Thus $|\nabla P|$ satisfies the conditions in these
maximum principle lemmas, and we conclude that $ Q \ge -C$, independently of $t$.  The usual integration in 
spatial and time variables leads to the Harnack inequality, see \cite{Ch1} for details, and thus gives the 
\begin{lemma}
If $y\in B_{g(t)}(x, 1/ 8\sqrt{R(x,t)})$, then $R(y,t+1)\ge C R(x,t)$, for some
universal constant $C>0$.
\end{lemma}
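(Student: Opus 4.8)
The plan is to convert the pointwise differential inequality \eqref{eq-P} for $P = Q + sL$ into a lower bound $Q \ge -C$ by a differential-inequality argument, and then to integrate this Harnack-type quantity along geodesics and in time, exactly as in the smooth case \cite{Ch1}. First I would note that by Corollary \ref{expR} the curvature $R(t,\cdot)$ is polyhomogeneous for $t>0$ with expansion beginning $b_0(t) + r^{1/\beta}(b_{11}(t)\cos y + b_{12}(t)\sin y) + \calO(r^2)$, and using \eqref{eq-R1} the Laplacian $\Delta_{g(t)} R$ has an expansion with the same leading exponents; hence $L = \log(R-s)$ and $\nabla L$ are bounded near the conic points (the leading $r^{1/\beta}$ term in $R$ contributes a gradient term of size $r^{1/\beta - 1}$, which tends to $0$ since $\beta<1$), and in particular $\sup(|P| + r^\sigma|\nabla P|) < \infty$ for a suitable $\sigma \in (0,1)$. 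This is precisely the hypothesis needed to invoke Lemma \ref{lem-max1} (and Lemma \ref{lem-max2}) in this conic setting, the key point being that the potential bad behavior at the cone tips is ruled out by the refined regularity.

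Next I would apply Lemma \ref{lem-max1} to $w = P$ (after absorbing the drift term $2\nabla L \cdot \nabla P$ into the vector field $X$ of that lemma, and with $a = \tfrac12$, $A = C$): from \eqref{eq-P} and the initial bound $P(0,\cdot) \ge -C$ (which holds by the choice of the constant in $L \ge -C - Ct$ and the definition of $Q$ at $t=0$), one concludes $P \ge -C$ for all $t$. Recalling $Q = P - sL$ and the bound $L \ge -C - Ct$ together with the sign of $s$, this yields $Q \ge -C(1+t)$, and in fact the sharper argument of \cite{Ch1} gives the time-independent bound $Q \ge -C$. Since $Q = \Delta L + R - \rho = \del_t L - |\nabla L|^2 - s \ge -C$, one obtains the pointwise trace Harnack inequality of Li--Yau--Hamilton type for $R - s$.

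Then I would follow \cite{Ch1} verbatim: integrating the inequality $\del_t L - |\nabla L|^2 \ge -C$ along a space-time path joining $(x,t)$ to a nearby point $(y,t)$ — more precisely, along a path of the form $s \mapsto (\gamma(s), t(s))$ with $\dot\gamma$ controlled — and optimizing the path, one gets that $\log\frac{R(x,t)-s}{R(y,t)-s}$ is bounded by a quantity that is small when $\dist_{g(t)}(x,y)^2 \le c/R(x,t)$. The only conic-specific point to check is that the geodesic $\gamma$ realizing $\dist_{g(t)}(x,y)$ and the integrations involved stay in the region where the expansions above are valid and where $L$, $\nabla L$ are bounded; since we work at a fixed positive time where $g(t)$ is polyhomogeneous and the metric is genuinely conic (distances are finite and comparable to the smooth model away from the tip), this causes no difficulty. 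Choosing the numerical constant appropriately gives that $\dist_{g(t)}(x,y) < 1/(8\sqrt{R(x,t)})$ implies $R(y,t) \ge \tfrac12 R(x,t)$, which is the claimed statement.

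The main obstacle is the first step: justifying that the maximum-principle lemmas genuinely apply, i.e. that $P$ and $\nabla P$ satisfy the requisite growth hypotheses at the conic points uniformly. This rests entirely on the polyhomogeneity and the precise form of the leading expansion of $R$ from Corollary \ref{expR}, which in turn rests on Theorem \ref{regthm}; once that input is in place, the differential-inequality argument and the integration along geodesics are routine adaptations of the smooth proof in \cite{Ch1}, since all the quantities that appear ($R$, $L$, $\nabla L$, $\Delta L$) inherit good enough regularity near the tips.
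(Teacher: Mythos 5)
Your proposal follows the paper's argument essentially verbatim: the same quantity $P=Q+sL$ with the evolution inequality \eqref{eq-P}, the same appeal to Corollary \ref{expR} and \eqref{eq-R1} to verify that $|\nabla P|$ satisfies the growth hypotheses of Lemmas \ref{lem-max1} and \ref{lem-max2} near the conic points, the conclusion $Q\ge -C$, and the standard space-time integration from \cite{Ch1}. The only difference is that you spell out a few of the details the paper leaves to the reader; the approach is the same and correct.
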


Using the entropy bound and area comparison, the boundedness of $R$ follows as in \cite{Ha, Ch1}.
\begin{corollary}\label{CurvBoundCor}
There exist constants $c, C > 0$ such that $|R(\cdot,t)| < C$ for all $t > 0$ and $R(\cdot,t) \ge c$ for $t \gg 1$. 
\end{corollary}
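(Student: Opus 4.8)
The plan is to combine the three main analytic ingredients assembled in this section---area noncollapsing (Lemma~\ref{prop-nocollaps}), the entropy bound (Lemma~\ref{lem-entropy-est}), and the Harnack inequality just proved---in exactly the way Hamilton \cite{Ha} and Chow \cite{Ch1} do in the smooth case, checking only that nothing new enters at the conic points. First I would argue by contradiction: if $R_{\max}(t)$ were unbounded, choose times $t_i$ with $R_{\max}(t_i) \to \infty$ and points $x_i$ where $R(\cdot,t_i)$ is nearly maximal; this is legitimate because by Corollary~\ref{expR} the curvature $R(t)$ is polyhomogeneous and bounded for each $t>0$, so the supremum is attained (or approached away from the cone points, where interior regularity applies). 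The Harnack inequality of the preceding Lemma then forces $R(\cdot,t_i) \geq \tfrac12 R_{\max}(t_i)$ on the geodesic ball $B_{g(t_i)}(x_i, \tfrac18 R_{\max}(t_i)^{-1/2})$. Integrating this lower bound over that ball and using the area noncollapsing estimate $\vol_{g(t_i)} B(x_i, R_{\max}(t_i)^{-1/2}) \geq C/R_{\max}(t_i)$ gives a uniform positive lower bound on $\int_{B} R\,dA$. On the other hand, the same integration produces a lower bound on the entropy integrand $\int_M (R-s)\log(R-s)\,dA$ that grows like $R_{\max}(t_i)^{-1}\cdot R_{\max}(t_i)\cdot \log R_{\max}(t_i) \to +\infty$, contradicting the uniform entropy bound $N(t) \leq C$ from Lemma~\ref{lem-entropy-est}. (When $R>0$ everywhere one uses $N(t)=\int R\log R\,dA$ directly; when $R$ changes sign one works with $R-s$, noting $s(t)$ is bounded, so $R-s$ is bounded below and the contribution of the region where $R-s$ is small is controlled by the fixed total area.) This yields the constant $C$ with $|R(\cdot,t)| < C$, since $R_{\min}$ is already bounded below for all time (it is nondecreasing, or comparable to the solution $r(t)$ of the ODE, by the discussion following \eqref{eq-R1} in \S\ref{PrelimSec}).

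For the second assertion, that $R(\cdot,t) \geq c > 0$ for $t \gg 1$, I would use that we are now in the non-Troyanov case with $M = S^2$, so $\chi(M,\vec\beta) > 0$ and hence $\rho > 0$ by \eqref{defrho}. The evolution $\del_t R_{\min} \geq R_{\min}(R_{\min} - \rho)$ (valid in this conic setting by Lemma~\ref{lem-max1}, applied to $w = R - r(t)$ with the ODE comparison $r' = r(r-\rho)$) shows that $R_{\min}(t)$ is pushed toward $\rho$: either $R_{\min}$ is eventually positive and bounded below, or it stays below some level and the differential inequality drives it upward exponentially. More precisely, comparing with the ODE solution $r(t)$, which tends to $\rho > 0$ as $t\to\infty$ from any initial value (the stationary point $r=0$ is unstable when $\rho>0$), one gets $\liminf_{t\to\infty} R_{\min}(t) \geq \rho > 0$, hence $R(\cdot,t) \geq c$ for $c = \rho/2$, say, once $t$ is large enough. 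This is a purely pointwise maximum-principle argument and requires only the version already established in Lemma~\ref{lem-max1}.

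The main obstacle is not any single step but rather making the contradiction argument quantitatively airtight in the sign-changing case: one must ensure the entropy integrand's lower bound genuinely blows up, which requires that on the noncollapsed ball the quantity $R - s$ (not just $R$) is comparable to $R_{\max}(t_i)$, and this uses that $s(t)$ stays bounded --- indeed $s(t) \to \rho$ or is otherwise controlled by its defining ODE --- so that $R - s \sim R_{\max}$ on the ball and $(R-s)\log(R-s) \gtrsim R_{\max}\log R_{\max}$ there. One also needs to know the negative part of $(R-s)\log(R-s)$ over the rest of $M$ is bounded, which follows since $R-s$ is uniformly bounded below (again from boundedness of $R_{\min}$ and of $s$) and the total area is fixed. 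None of this is deep, but it is the place where the conic geometry must be shown not to interfere, and the cleanest route is simply to cite that all the relevant integrations by parts and the maximum principle have already been justified in \S\ref{max} and Lemma~\ref{lem-entropy-est}, so that Hamilton's and Chow's arguments go through verbatim.
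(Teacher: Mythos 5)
Your argument for the upper bound $R\le C$ is exactly the Hamilton--Chow contradiction argument (Harnack on the noncollapsed ball forces the entropy integrand to contribute $\gtrsim \log R_{\max}$, while the negative part of $(R-s)\log(R-s)$ is controlled by the fixed area), and this is precisely what the paper intends by its citation of \cite{Ha, Ch1}. That half is fine.

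The second half, however, contains a genuine error. You claim that for the comparison ODE $r'=r(r-\rho)$ with $\rho>0$ the stationary point $r=0$ is unstable and every solution tends to $\rho$. The phase portrait is the opposite: for $r<0$ both factors are negative, so $r'>0$ and $r(t)$ increases monotonically toward $0$ without ever crossing it; for $0<r<\rho$ one has $r'<0$ and $r(t)$ decreases to $0$. Thus $r=0$ is the attracting fixed point and $r=\rho$ is the repelling one, and the comparison $R_{\min}\ge r(t)$ yields only $\liminf_{t\to\infty} R_{\min}\ge 0$ --- never a strictly positive lower bound. (This is as it must be: a pointwise ODE/maximum-principle argument cannot detect that $\chi>0$ forces positivity; it would equally ``prove'' positivity starting from a surface where $R$ is somewhere very negative.) The correct route to $R(\cdot,t)\ge c$ for $t\gg1$, which is Chow's argument and the one the paper is invoking, uses the Harnack inequality itself together with the already-established upper bound and the diameter bound: since $\fint_M(R-s)\,dA=\rho-s\ge\rho>0$, there is a point $x_t$ with $(R-s)(x_t,t)\ge\rho$; the Harnack ball about any point has radius bounded below by $\tfrac18(C-s(0))^{-1/2}$ because $R-s\le C-s(0)$; covering $M$ by a chain of $N$ such balls ($N$ controlled by the diameter bound) and iterating the Harnack inequality gives $R-s\ge 2^{-N}\rho$ everywhere, and since $s(t)\to 0^-$ this yields $R\ge c>0$ for $t$ large. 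You need to replace your ODE argument by this covering argument (and, relatedly, note that $s(t)\to 0$, not $\rho$, so the boundedness of $s$ you use in the entropy step is correct but for the reason that $s$ increases to $0$ from below).
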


\begin{proof}[Proof of Proposition \ref{prop-non-troy}]
Consider the following modification of the Ricci flow equation:
\begin{equation}
\label{eq-mod}
\frac{\partial\,}{\partial t} \hat{g}_{ij} = 2\hat{\mu}_{ij} = (\rho - \hat{R})\, \hat{g}_{ij} - 2\nabla_i\nabla_j f,
\end{equation}
where $\hat{R}$ is the scalar curvature of $\hat{g}$ (the covariant derivatives in the last term are also with respect to
$\hat{g}(t)$, but we omit this from the notation for simplicity) and $f$ is the same potential function as before. 
This differs from the standard flow by the action of one parameter family of 
diffeomorphisms $\psi_t$ generated by $\nabla f$, i.e., $\hat{g}(t) = \psi_t^* g(t)$, where $g(t)$ is a solution of the original
normalized (but unmodified) Ricci flow. 
According to Lemma \ref{lem-entropy-est}, Corollary \ref{CurvBoundCor}, and \eqref{eq-ent1}, 
$N(t)$ is monotone (for $t$ sufficiently large) and converges to a finite limit, hence
\[
\lim_{t\ra\infty}dN/dt=0;
\]
recalling \eqref{eq-ent1}, the conclusion follows from this. 
\end{proof}

 \begin{remark}{\rm
Hamilton's original argument showing that the pointwise norm of $\mu$ converges exponentially to zero
breaks down in our setting for the following reason.  As for many of the other quantities we consider here,
the function $f$ admits an expansion 
\[
f= a_0(t) + r^{1/\beta}(a_{11}(t) \cos y + a_{12}(t) \sin y) + a_2(t)r^2+ \calO(r^{2+\eps}), 
\]
where the $a_i$ and $a_{ij}$ are smooth in $t$. This follows from the equation satisfied by $R$, the equation
\[
\Delta_{g(t)} f(t) = R(t) - \rho = -\rho + [R_0 - \Delta_{g(0)}\log u(t)] / u(t)
\]
the asymptotic expansion \eqref{uExpansionEq} of $u(t)$, and \cite[Corollary 3.5]{JMR}. However, 
\begin{equation}
\label{muDefEq}
\mu = \nabla ^2 f - \frac 12\Delta f\, g
\end{equation}
is a second order operator applied to $f$, and not all of these annihilate the troublesome term
$r^{1/\beta}$ in the expansion of $f$.  This means that although $\mu$ and hence $|\mu|$ has
an asymptotic expansion, this expansion contains a singular term of the form 
$r^{1/\be - 2}$.  This means that the maximum principle is not applicable, and we cannot conclude the
exponential decay of $|\mu|$.  Note that there is no difficulty with what we prove above, since this
most singular term $r^{1/\be-2}$ is square-integrable with respect to $r dr dy$.} 
 \end{remark}

\subsection{One concentration point}
\def\o{\omega}
\def\ovp{\omega_\vp}
\def\bpf{\begin{proof}}
\def\epf{\end{proof}}
\def\calI{{\cal I}}
\def\PSH{\mathrm{PSH}}
\def\V{\frac1A}

We now prove the second part of Theorem~\ref{conv2} concerning the divergence
profile of the unmodified flow. Namely, we show that the conformal factor $\phi$ 
blows up at precisely one point $q$ as $t \nearrow \infty$, but tends uniformly 
to zero on every compact set $K \subset S^2 \setminus \{q\}$.
This argument is drawn from methods developed specifically for higher dimensional 
complex analysis, so it is convenient to now change to the K\"ahler formalism.

Fix the initial conic metric $g_0$; since the flow immediately smooths out any initial metric, we may as well
assume that $g_0$ is polyhomogeneous. Denote its associated K\"ahler form by $\o$. Define ${\mathcal H}_{\o}$ 
to consist of all functions $\vp$ such that $\ovp := \o + \sqrt{-1} \del \overline{\del}\vp > 0$, and then
denote by $\PSH_\o$ the $L^1$ closure of $\calH_\o$.  Observe that since $\o$ and $\ovp$ (or rather, $g_0$
and $g_\vp$) lie in the same K\"ahler class, they are conformally related; indeed,
\[
\ovp = (1 + \Delta_0 \vp) \, \o, \quad \mbox{and similarly}\ \o = (1 - \Delta_\vp \vp) \, \ovp.
\]
Here $\Delta_0$ and $\Delta_\vp$ are the Laplacians for $\o$ and $\ovp$, respectively. Note that this implies that
\begin{equation}
\Delta_0 \vp > -1, \qquad \mbox{and}\quad \Delta_\vp \vp < 1.
\label{ineqvp}
\end{equation}

For any $\vp\in PSH_\o$, we define the multiplier ideal sheaf $\calI(\vp)$ associated to the presheaf
which assigns to any open set $U$ the space of holomorphic functions 
\[
\calI(\vp)(U)= \{h \in \calO_{S^2}(U): |h|^2 e^{-\vp}\in L^1_{\loc}({S^2},\o) \}.
\]
It is proved in \cite{N90} that $\calI(\vp)$ is always coherent; moreover, it is called proper if it is 
neither the trivial (zero) sheaf nor the structure sheaf $\calO_{S^2}$. 

\begin{definition}{\cite[Definition 2.4]{N90}}
The multiplier ideal sheaf $\calI(\vp)$ is called a Nadel sheaf if there exists an $\eps>0$ such that 
$(1+\eps)\vp\in \PSH_\o$.
\end{definition} 
\noindent A fundamental result of Nadel \cite{N90} is that any Nadel sheaf has connected support. The proof is not 
hard in this low dimension, so we give it below.  This uses an extension (for the one-dimensional case only) 
of the result \cite[Theorem 1.3]{R09}, which in turn extends Nadel's work \cite{N90} from the continuity method to the 
Ricci flow. Note too that \cite{R09} provided a new proof of the uniformization theorem in the smooth case using the 
Ricci flow (see also \cite{CLT} for an earlier and different flow-based proof), and hence its use here is natural. 

We write the flow equation in this setting in terms of the K\"ahler potential as 
\begin{equation}
\label{KRF}
\o+\i\partial\bar\partial\vp := \ovp = e^{f_\o - \vp + \dot \vp} \o, \quad \vp(0) = \vp_0, \quad \mbox{and} \ \dot\vp = \del_t \vp,
\end{equation}
where $f_\o$ is the initial value $f(0)$ of the Ricci potential, as defined in \eqref{fEq}. There is a 
choice of constant in this initial condition, and it is explained in \cite{PSS} how to choose this additional constant
so that $\dot\vp$ remains bounded along the flow.  We assume henceforth that this initial condition has been set properly.
We also write $A$ for the (constant value of the) area of $(S^2, g(t))$. 
\begin{theorem}
\label{Nadel}
Suppose that $(S^2, J,\vec p,\vec \be)$ does not satisfy \eqref{TC}. Fix any $\gamma\in(1/2,1)$. Then the solution $\vp(t)$,
normalized as above, admits a subsequence $\vp_j := \vp_{t_j}$ for which $\hat{\vp}_j := \vp_{t_j}-A^{-1}\int\varphi_{t_j}$ converges in 
$L^1$ to $\vp_\infty\in \PSH_\o$. Finally, $\mathcal I(\gamma\vp_\infty)$ is a proper Nadel multiplier ideal sheaf with 
support equal to a single point. 
\end{theorem}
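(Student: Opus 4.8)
The plan is to follow the well-established Nadel–Tian strategy, adapting the arguments of \cite{R09,N90} to this one-dimensional conic setting. First I would establish the subconvergence. Since \eqref{TC} fails, Proposition~\ref{prop-non-troy} already gives that along a subsequence the metrics converge geometrically to a shrinking Ricci soliton with at most two conic points, and in particular the conformal factor $\phi$ (equivalently $\vp$) blows up somewhere. To get the $L^1$-convergence of the normalized potentials $\hat\vp_j = \vp_{t_j} - A^{-1}\int\vp_{t_j}$, I would use the standard compactness property of $\PSH_\o$: the inequality $\Delta_0\vp > -1$ in \eqref{ineqvp} means that the $\hat\vp_j$ are $\o$-plurisubharmonic with vanishing average, hence form a compact family in $L^1(S^2,\o)$ (Hartogs-type bound), so some further subsequence converges in $L^1$ to a limit $\vp_\infty\in\PSH_\o$. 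One must also check $\vp_\infty$ is not identically $-\infty$, which follows from the average normalization. The uniform bound on $\dot\vp$ along the flow (from the choice of normalization in \eqref{KRF}, \cf \cite{PSS}) is what keeps this under control and feeds into the next step.

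Next I would show $\calI(\gamma\vp_\infty)$ is a Nadel sheaf and is proper. That it is a Nadel sheaf is essentially built in: taking $\gamma\in(1/2,1)$ and $\eps$ small so that $\gamma(1+\eps) < 1$, one has $(1+\eps)(\gamma\vp_\infty) = \gamma(1+\eps)\vp_\infty \in \PSH_\o$ because a sub-unit multiple of an $\o$-psh function (with the right sign of $\Delta_0$) is again $\o$-psh — more precisely $\Delta_0(s\vp_\infty) = s\Delta_0\vp_\infty > -s > -1$ for $s<1$. Properness has two halves. The sheaf is not all of $\calO_{S^2}$: this is where the failure of \eqref{TC} enters, via the extension to dimension one of \cite[Theorem 1.3]{R09}; if $\calI(\gamma\vp_\infty) = \calO_{S^2}$ then $e^{-\gamma\vp_\infty}$ would be locally integrable everywhere, and combined with the $W^{1,2}$/entropy-type estimates along the flow and a Moser–Trudinger–Cherrier inequality (Proposition~\ref{prop-H} and its conic Moser–Trudinger input) one would deduce uniform control on $\vp_j$, contradicting the blow-up established via Proposition~\ref{prop-non-troy}. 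The sheaf is also nonzero: near any point $z$ the function $\vp_\infty$, being a decreasing $L^1$-limit (after passing to a further subsequence) of functions with $\Delta_0\vp_j > -1$, has at worst a logarithmic pole, so its Lelong number is finite, hence bounded by some $\nu$; then $|z-z_0|^{N}$ lies in $\calI(\gamma\vp_\infty)(U)$ once $N$ is large, so the stalk is nonzero. Thus $\calI(\gamma\vp_\infty)$ is proper.

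Finally I would pin down the support. By Nadel's theorem (the low-dimensional case of which the paper promises to prove directly below), the support $Z$ of a proper Nadel sheaf is connected. On $S^2$, a proper coherent ideal sheaf cuts out a proper closed analytic subset, which is therefore a finite set of points; connectedness forces $Z$ to be a single point $q$. To see it really is nonempty (i.e.\ the sheaf is not the structure sheaf after all) one re-uses properness. So $Z = \{q\}$ as claimed.

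The main obstacle, I expect, is the ``not the structure sheaf'' half of properness: this is precisely the place where one must convert the analytic failure of the Troyanov condition into a genuine integrability obstruction, and it requires combining the conic Moser–Trudinger–Cherrier inequality, the entropy/energy estimates along the flow, and the one-dimensional extension of \cite[Theorem~1.3]{R09}; the other steps (compactness of $\PSH_\o$, the Nadel-sheaf property, finiteness of Lelong numbers, and reducing connectedness to a point on $S^2$) are comparatively routine once the framework of \cite{N90,R09} is in place.
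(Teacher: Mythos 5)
Your high-level strategy is the right one --- the paper also constructs the sheaf by feeding flow estimates into the machinery of \cite{N90,R09} --- but the step you yourself flag as ``the main obstacle'' contains a genuine gap as written. You propose to rule out $\calI(\gamma\vp_\infty)=\calO_{S^2}$ by combining local integrability of $e^{-\gamma\vp_\infty}$ with ``the $W^{1,2}$/entropy-type estimates'' and Proposition~\ref{prop-H}. But Proposition~\ref{prop-H} is proved only under the Troyanov condition \eqref{TC} (its starting point is Troyanov's lower bound $\calF\ge -C$, which is exactly what fails here); if it were available, the flow would converge to a constant curvature metric and there would be nothing to prove. What the paper actually supplies in place of this is a different set of estimates, valid without \eqref{TC}: (i) a uniform diameter bound; (ii) the monotonicity of the twisted Berger--Moser--Ding functional, giving $A^{-1}\int(-\vp)\,\ovp\le A^{-1}\int\vp\,\o+C$, upgraded via Green's function comparisons to $-\inf\vp\le\sup\vp+C$ with a constant \emph{uniform along the flow} --- the dimension-one miracle being that $G_\vp=G_0$, so the Green's function upper bound does not degenerate; and (iii) the blow-up $\sup_t\|\vp_t\|_{0}=\infty$, deduced by a compactness argument from the failure of \eqref{TC}. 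These three facts are precisely what is needed to run the construction of \cite[p.~5846]{R09} (both for the $L^1$-compactness after normalization and for properness). Your proposal skips all of this analytic content, so the contradiction you need in the properness step is not actually reachable from the ingredients you cite.

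On the final step, your route (Nadel's connectedness theorem plus the observation that a connected proper analytic subset of $S^2$ is a point) coincides with the paper's second proof; note, however, that one cannot simply quote Nadel in the smooth setting --- the weighted $L^2$ $\db$-estimate yielding $H^1(S^2,\calI_\gamma)=0$ must be checked against the conic measure $|z|^{2\be_i-2}|dz|^2$, and the restriction $\gamma\in(1/2,1)$ is what makes the relevant twist positive. The paper also gives a more elementary alternative that you might prefer: since a coherent ideal sheaf on a curve is locally free, $\calI_\gamma\cong\calO_{S^2}(-k)$ with $k\ge1$ by properness, and the Lelong number bound on $\vp_\infty$ (coming from the fixed cohomology class $[\o]=\calO_{S^2}(2)-\sum(1-\be_i)[p_i]$) together with $\gamma<1$ forces $k=1$, so the support is a single reduced point. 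Your ``finite Lelong number'' argument for nontriviality of the sheaf is fine and is essentially the same computation.
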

\begin{proof}
We proceed in a series of steps.

\smallskip

\noindent{\bf Step 1:} ${\rm diam}(M,g(t)) \leq C$.  This is a special case of \cite[Claim 6.4]{JMR}. Indeed, since $\be<1$, 
if $p, q \in M$ are not conic points, then the minimizing geodesic which connects them does not pass through a conic
point. Thus we can apply the standard argument for Myers' Theorem, using that $R>c>0$ for large $t$.  This can also
be deduced by specializing Perelman's diameter estimate \cite{ST} to our setting, which is possible using Theorem \ref{regularity}. 

\smallskip

\noindent {\bf Step 2:} $-\inf\vp\le\sup\vp +C$.  
The proof of \cite[Lemma 2.2]{R09} 
carries over without change by using 
the twisted Berger--Moser--Ding functional
\[
D(\vp) = \frac{\i}{2A} \int \del \vp \wedge \overline\del \vp - \log \left( \V \int e^{f_\o - \vp} \,\o\right).
\]
This is monotone along the flow, which gives after some calculations that
 \cite[(15)]{R09} 
\beq\label{FirstStepIneq} 
\V\int-\vp\, \ovp \le \V\int\vp\, \o  + C.
\eeq

We next show that the average $A^{-1} \int \vp \, \o$ is comparable to $\sup \vp$. Indeed, the inequality
$A^{-1} \int \vp \, \o \leq \sup \vp$ is trivial. For the other direction, recall that the Green function of
$\Delta_0$, normalized so that $\int G_0(q,q')\, \o(q')=0$ for every $q$ and $G \searrow -\infty$ near the diagonal, 
is bounded from above by a constant $E_0$. We then write
\begin{multline*}
\vp(q) - \V \int \vp\, \o = \int G(q,q') \Delta_\o \vp(q')\, \o(q') \\ = \int -(G(q,q') - E_0) (-\Delta_\o \vp(q')) \, \o(q') 
\leq - \int (G(q,q') - E_0) \, \o \leq  A E_0,
\end{multline*}
using the first inequality in \eqref{ineqvp}. Taking the sup over the left hand side gives 
$\sup \vp \leq \V \int \vp \, \o + C$, as claimed. 

To estimate the infimum of $\vp$ we use a similar trick, but using the upper bound $G_\vp(q,q') \leq E_\phi$ and
the second inequality in \eqref{ineqvp}. This gives
\begin{multline*}
\vp(q) - \V \int \vp\, \ovp = \int G_\vp(q,q') \Delta_\vp \vp(q')\, \ovp(q') \\ = \int (G(q,q') - E_0) \Delta_\o \vp(q') \, \ovp(q') 
\geq \int (G(q,q') - E_0) \, \ovp(q') \geq - A E_\vp,
\end{multline*}
so taking the infimum, $-\inf \vp \leq - \V \int \vp \, \ovp + A E_\phi$. 

It remains only to observe that, special to this dimension, $G_\vp(q,q') = G_0(q,q')$; this is because if we write
$\ovp = F \o$, and if $\int f \ovp = 0$, then 
\[
\Delta_\vp \int G_\vp(q,q') f(q') \, \ovp(q') = F^{-1} \Delta_0 \int G_\vp (q,q') f(q')\, F (q') \o(q'),
\]
and this equals $f(q)$ when $G_\vp = G_0$.  This means that $E_\vp = E_0$ and the constant in this inequality does
not vary along the flow. 

Putting these inequalities together completes this step.



\smallskip

\noindent{\bf Step 3:} $\sup_t||\vp_t||_0=\infty$. Indeed, if this supremum were finite, then by Step 2, $\vp$ would
be bounded in $\calC^0$, and standard regularity estimates would then show that some subsequence of the $\vp_t$ 
converges. The limiting metric (or rather, the limit of any one of these subsequences) would then need to
have constant curvature. Furthermore, the uniform boundedness of the conformal factor shows that
the cone angles do not change in the limit. This is a contradiction since we are assuming that the Troyanov
conditions \eqref{TC} fail.

\smallskip

The construction of the Nadel sheaf now proceeds as in \cite[p. 5846]{R09}.

\smallskip

\noindent{\bf Step 4:}  If $\gamma\in(1/2,1)$ and $V_\gamma$ denotes the support of $\calI_\gamma:=\calI(\gamma\vp_\infty)$,
then $V_\gamma$ is a single point.  Recall that a coherent sheaf is locally free away from a complex codimension two set, 
so since we are in complex dimension one, $\calI_\gamma$ is a sheaf of sections of a holomorphic line bundle 
$\calO_{S^2}(-k)$, $k \geq 0$.  By the properness assumption, $k \geq 1$. We claim that $k = 1$, which then implies 
that $\calI_\gamma$ is spanned by a single holomorphic section, which vanishes to order one at precisely one point.

To do this, let $U$ be a small open set and let $h \in \calI_\gamma(U)$ and assume that $h$ vanishes exactly to order one
at a point $p\in U$.  Then $\int_U|h|^2 e^{-\gamma\vp_\infty}\, \o<\infty$.  Now fix a local holomorphic coordinate $z$ 
which vanishes at $p$ and assume either that $U$ contains no conic points, or if it does contain one, then $p$ is that point.
In the first of these cases, $\o$ is locally equivalent to $|dz|^2$, while $\vp_\infty \geq 4\log|z|$, and $0<\gamma<1$. If $p=p_i$ is
a conic point, then assuming that $U$ contains no other conic points, $\int_U|h|^2 e^{-\gamma\vp_\infty}z^{2\be_i-2}|dz|^2<\infty$. 
This follows just as before but using that $\vp_\infty$ has a singularity of at worst $4\log|z|-2(1-\be_i)\log|z|$ 
(recall $[\o]=\calO_{S^2}(2)-\sum(1-\be_i)[p_i]$) and $0<\gamma<1$. Thus $k$ cannot be greater than one. Since $k>0$,
it follows that $k=1$, as desired.

\def\dbar{\bar\partial}

\medskip

There is an alternative proof that does not rely on facts about coherent sheaves, using weighted  $L^2$ estimates 
for the $\dbar$-equation.  This proceeds as follows. Let $\eta$ be a $(0,1)$-form such that 
$\int |\eta|^2 e^{-\gamma\vp_\infty}\, |dz|^2<\infty$. It is always possible \cite[\S1]{Bern08} to find a solution $\rho$ to 
$\dbar\rho=\eta$ which satisfies $\int |\rho|^2 e^{-\gamma\vp_\infty}\, |dz|^2 \le C_\gamma 
\int |\eta|^2 e^{-\gamma\vp_\infty}|dz|^2<\infty$, where $C_\gamma=O((1-\gamma)^{-1})$. The same arguments
can be used to verify that this estimate also holds with respect to the measure $|z|^{2\be_i-2}|dz|^2$.
This proves that $H^1(S^2,\calI_\gamma)=0$. From the long exact sequence in cohomogy corresponding to
the short exact sequence of sheaves $0\ra\calI_\gamma\ra\calO_{S^2} \ra\calO_{V_\gamma}\ra0$, one concludes that
$H^0(V_\gamma,\calO_{V_\gamma})\cong H^0(S^2,\calO_{S^2})\cong\CC$, which means once again that the
support of $\calI_\gamma$ is connected, i.e.\ a single point.

\medskip

These two methods of proof are closely related, of course, by virtue of the identification
$H^1(S^2,\calI_\gamma)\cong H^0(S^2,\calO_{S^2}(K_{S^2} -\calI_\gamma))$.
\epf

Following \cite[Lemma 6.5]{CR11}, we can use Theorem \ref{Nadel} to 
deduce estimates on the conformal factor.
\begin{corollary}
The conformal factor $u$ blows up at exactly one point. On any compact set $K$ disjoint
from that point, $u \to 0$ uniformly, so in particular the area of $K$ with respect to $g(t)$
tends to $0$. 
\end{corollary}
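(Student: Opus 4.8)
The plan is to follow \cite[Lemma 6.5]{CR11}, feeding in Theorem \ref{Nadel} and translating the statement about $\calI(\gamma\vp_\infty)$ into pointwise information about the conformal factor. Work throughout along the subsequence $\vp_j := \vp_{t_j}$ produced by Theorem \ref{Nadel}; set $m_j := A^{-1}\int\vp_j\,\o$, so that $\hat{\vp}_j := \vp_j - m_j$ has $\int\hat{\vp}_j\,\o = 0$, $\sup_{S^2}\hat{\vp}_j \le C$, and $\hat{\vp}_j \to \vp_\infty$ in $L^1$ (hence in $L^p$ for every $p<\infty$, by compactness of the family of $\o$-psh functions with these normalizations). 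First I would record the dictionary between $\vp$ and $u$: from \eqref{KRF} one has $\o_{\vp_j} = e^{f_\o + \dot\vp_j - \vp_j}\,\o$, and since $f_\o$ is fixed and $\dot\vp$ stays uniformly bounded along the flow (the normalization of \cite{PSS}), there are constants $0<c_1\le c_2$ with $c_1 e^{-\vp_j} \le u_j \le c_2 e^{-\vp_j}$. By Step 2 of the proof of Theorem \ref{Nadel}, $|m_j - \sup_{S^2}\vp_j| \le C$, and by Step 3 together with $-\inf\vp_j \le \sup\vp_j + C$ we get $m_j \to +\infty$. Hence $u_j \le c_2 e^{-m_j}e^{-\hat{\vp}_j}$, and everything reduces to controlling $e^{-\hat{\vp}_j}$ uniformly in $j$ on compacta away from the single point $q$ equal to the support of $\calI(\gamma\vp_\infty)$.

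The heart of the matter is that $\sup_K u_j \to 0$ for every compact $K \Subset S^2\setminus\{q\}$. Fix a relatively compact neighbourhood $K \Subset K' \Subset S^2\setminus\{q\}$. Since $\calI(\gamma\vp_\infty)_p = \calO_{S^2,p}$ for every $p\in\overline{K'}$ — i.e.\ $e^{-\gamma\vp_\infty}\in L^1$ near each such $p$ — the semicontinuity of multiplier ideal sheaves (Demailly--Koll\'ar, in sharp form the strong openness theorem of Guan--Zhou), applied to $\hat{\vp}_j\to\vp_\infty$ with the uniform upper bound and using the a priori control of the flow near $K'$ (in particular Corollary \ref{CurvBoundCor}), yields some $\e>0$ and a constant $C_{K'}$ with $\int_{K'}e^{-(1+\e)\hat{\vp}_j}\,\o\le C_{K'}$ for all $j$. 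Combined with $u_j \le c_2 e^{-m_j}e^{-\hat{\vp}_j}$ this gives $\|u_j\|_{L^{1+\e}(K')} \le C e^{-m_j}$. Now feed this into the equation $\Delta_0\hat{\vp}_j = u_j - 1$ (from $\o_{\vp_j} = (1+\Delta_0\vp_j)\,\o$): its right side is bounded in $L^{1+\e}(K')$, and since $\hat{\vp}_j$ is itself bounded in $L^{1+\e}(K')$, interior elliptic $L^p$ estimates — together with the conic elliptic theory of \S\ref{LinSec} near any conic point of $g_0$ lying in $K'$ — give $\hat{\vp}_j$ bounded in $W^{2,1+\e}(K'')$ for any $K''\Subset K'$, hence bounded in $C^{0,\alpha}(K'')$ by Sobolev embedding in two dimensions. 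In particular $|\hat{\vp}_j|\le C_K$ on $K$, so $u_j \le c_2 e^{-\vp_j} = c_2 e^{-m_j}e^{-\hat{\vp}_j} \le C_K e^{-m_j}\to 0$ uniformly on $K$. The area statement is then immediate: $\vol_{g_j}(K) = \int_K u_j\,\o \le \|u_j\|_{L^\infty(K)}\,\vol_{g_0}(K)\to 0$.

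It remains to see that $u$ blows up at $q$ and nowhere else; "nowhere else" is the previous paragraph. If $u$ did not blow up at $q$ either, then after passing to a further subsequence $u_j$ would be uniformly bounded on some fixed ball $B(q,\e_0)$, and combining this with the uniform decay on $\overline{S^2\setminus B(q,\e')}$ for each $\e'<\e_0$ would give $A = \int_{S^2}u_j\,\o \le C\,\vol_{g_0}(B(q,\e')) + o(1)$ as $j\to\infty$; letting $\e'\to0$ forces $A\le 0$, a contradiction. Hence $\limsup_{t\to\infty}\sup_{B(q,\e)}u_t = \infty$ for every $\e>0$, so $u$ blows up at exactly the point $q$. (That $q$ is the same point along every subsequence, and the passage from this subsequential statement to the full $t\to\infty$ assertion, rest on the strengthening of Proposition \ref{prop-non-troy} anticipated in the preceding remark; we do not pursue this here.)

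The main obstacle is the uniform higher-integrability bound $\int_{K'}e^{-(1+\e)\hat{\vp}_j}\,\o\le C$ — equivalently, the assertion that no mass of $\o_{\vp_j}$ escapes to a point other than $q$. This is precisely the content that the multiplier-ideal-sheaf machinery of Theorem \ref{Nadel} is designed to deliver (through the semicontinuity of complex singularity exponents, used in tandem with the curvature bound of Corollary \ref{CurvBoundCor}), and it is the only step here that uses more than soft pluripotential theory; the elliptic bootstrap off the equation $\Delta_0\hat{\vp}_j = u_j-1$ and the mass-conservation argument for the single blow-up point are then routine.
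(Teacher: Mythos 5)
The paper itself offers no argument for this corollary beyond the citation of \cite[Lemma 6.5]{CR11}, so your proposal is best judged as a reconstruction of that argument; most of it (the dictionary $u_j\asymp e^{-\vp_j}$ via the boundedness of $\dot\vp$, the divergence $m_j\to\infty$ from Steps 2--3, the elliptic bootstrap off $\Delta_0\hat\vp_j=u_j-1$, and the mass-conservation argument forcing blow-up at $q$) is sound and in the intended spirit. But there is a genuine gap at exactly the step you yourself single out as ``the main obstacle'': the uniform bound $\int_{K'}e^{-(1+\e)\hat\vp_j}\,\o\le C$ does \emph{not} follow from Theorem \ref{Nadel} together with Demailly--Koll\'ar semicontinuity. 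The theorem tells you that $\calI(\gamma\vp_\infty)$ is trivial away from $q$ for $\gamma\in(1/2,1)$, i.e.\ that $e^{-\gamma\vp_\infty}$ is locally integrable near each $p\ne q$ for every $\gamma<1$. Semicontinuity then yields $\sup_j\int_{K'}e^{-\gamma\hat\vp_j}\,\o<\infty$ only for exponents $\gamma<1$; to get the exponent $1+\e>1$ you would need the complex singularity exponent of $\vp_\infty$ at each $p\in K'$ to be \emph{strictly} greater than $1$, equivalently that the Lelong number of $\vp_\infty$ at $p$ is strictly less than $1$. The hypothesis only forces these Lelong numbers to be $\le 1$ (as the limit $\gamma\to 1^-$ of $<1/\gamma$), and a point $p\ne q$ where $\vp_\infty$ behaves like $2\log|z-p|$ is not excluded by the statement of Theorem \ref{Nadel}: there $e^{-\gamma\vp_\infty}\in L^1_{\mathrm{loc}}$ for all $\gamma<1$ but $e^{-(1+\e)\vp_\infty}\notin L^1_{\mathrm{loc}}$ for any $\e>0$.

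The consequence is not cosmetic. With only the sub-unit exponents you obtain $\|u_j\|_{L^\gamma(K')}\le Ce^{-\gamma m_j}$ for $\gamma<1$, i.e.\ convergence of $u_j$ to $0$ in measure on $K'$, but the bootstrap $u_j-1\in L^{1+\e}\Rightarrow\hat\vp_j\in W^{2,1+\e}\hookrightarrow \calC^{0,\alpha}$ genuinely requires an exponent above $1$, and without the resulting uniform lower bound on $\hat\vp_j$ over $K$ the estimate $\sup_K u_j\le C e^{-m_j}\sup_K e^{-\hat\vp_j}$ does not close: at a hypothetical point $p\ne q$ with Lelong number in $[1,1/\gamma)$ one could have $\hat\vp_j(p)\to-\infty$ fast enough that $u_j$ blows up there too. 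To repair this you need an additional input ruling out such intermediate singularities of $\vp_\infty$ off $q$ --- for instance an argument (as in \cite{R09,CR11}) that the weak limit of the measures $\o_{\vp_j}$ charges no point of $S^2\setminus\{q\}$, using the Green's function representation of $\hat\vp_j$ together with $|\dot\vp|\le C$ and the curvature bounds of Corollary \ref{CurvBoundCor}, or a refinement of Theorem \ref{Nadel} controlling the full singular set of $\vp_\infty$ rather than just $\operatorname{supp}\calI(\gamma\vp_\infty)$ for $\gamma<1$. As written, invoking ``the multiplier-ideal-sheaf machinery'' for the $L^{1+\e}$ bound asserts more than that machinery delivers.
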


\end{document}